\newtheorem{remark}{Remark}
\newcommand{\Ga}{\Gamma}
\newcommand{\pa}{\partial}
\newcommand{\hP}{\widehat{P}}
\newcommand{\FE}{\mbox{\tiny FE}}
\newcommand{\cE}{\mathcal{E}}
\newcommand{\cT}{\mathcal{T}}
\newcommand{\eps}{\epsilon}
\newcommand{\Ome}{\Omega}
\newcommand{\p}{\partial}
\newcommand{\nab}{\nabla}
\def\esssupI{\underset{t\in [0,\infty)}{\mbox{\rm ess sup }}}
\begin{document}

\title{Analysis of mixed interior penalty discontinuous Galerkin methods
for the Cahn-Hilliard equation and the Hele-Shaw flow}
\markboth{XIAOBING FENG AND YUKUN LI AND YULONG XING}{DG METHODS FOR CAHN-HILLIARD EQUATION}

\author{
Xiaobing Feng\thanks{Department of Mathematics, The University of Tennessee,
Knoxville, TN 37996, U.S.A. ({\tt xfeng@math.utk.edu.}) The work of this author was partially supported 
by the NSF grants DMS-1016173 and DMS-1318486.}
\and
Yukun Li\thanks{Department of Mathematics, The University of Tennessee,
Knoxville, TN 37996, U.S.A. ({\tt yli@math.utk.edu.}) The work of this author was partially supported 
by the NSF grants DMS-1016173 and DMS-1318486.}
\and
Yulong Xing\thanks{Department of Mathematics, The University of Tennessee, 
Knoxville, TN 37996 and Computer Science and Mathematics Division, Oak Ridge National Laboratory, Oak
Ridge, TN 37831 ({\tt xingy@math.utk.edu.}) The work of this author was partially supported by 
the NSF grant DMS-1216454, and Oak Ridge National Laboratory, 
managed by UT-Battelle, LLC for the U. S. Department of Energy under Contract No. De-AC05-00OR22725.}
}

\maketitle

\begin{abstract}
This paper proposes and analyzes two fully discrete mixed interior penalty discontinuous Galerkin 
(DG) methods for the fourth order nonlinear Cahn-Hilliard equation. Both methods use the backward Euler 
method for time discretization and interior penalty discontinuous Galerkin methods for spatial 
discretization.  They differ from each other on how the nonlinear term is treated, one of them is 
based on fully implicit time-stepping and the other uses the energy-splitting time-stepping.
The primary goal of the paper is to prove the convergence of the numerical interfaces of 
the DG methods to the interface of the Hele-Shaw flow. This is achieved 
by establishing error estimates that depend on $\epsilon^{-1}$ only in some low polynomial orders,
instead of exponential orders.
Similar to \cite{Feng_Prohl04}, the crux is to prove a discrete spectrum estimate in the discontinuous 
Galerkin finite element space. However, the validity of such a result is not obvious 
because the DG space is not a subspace 
of the (energy) space $H^1(\Ome)$ and it is larger than the finite element space. This 
difficult is overcome by a delicate perturbation argument which relies on the discrete 
spectrum estimate in the finite element space proved in \cite{Feng_Prohl04}.
Numerical experiment results are also presented to gauge the theoretical results and
the performance of the proposed fully discrete mixed DG methods.
\end{abstract}

\begin{keywords}
Cahn-Hilliard equation, Hele-Shaw problem, phase transition, discontinuous Galerkin method,
discrete spectral estimate, convergence of numerical interface.
\end{keywords}

\begin{AMS}
65N12, 
65N15, 
65N30, 
\end{AMS}

\section{Introduction}\label{sec-1}
This paper concerns with mixed interior penalty discontinuous Galerkin (MIP-DG) approximations 
of the following Cahn-Hilliard problem:
\begin{alignat}{2}
u_t-\Delta w &=0  &&\quad \mbox{in } \Omega_T:=\Omega\times(0,T),\label{eq1.1}\\
-\epsilon\Delta u +\frac{1}{\epsilon}f(u) &=w &&\quad \mbox{in } \Omega_T,\label{eq1.1a}\\
\frac{\partial u}{\partial n}
=\frac{\partial w}{\partial n} &=0 &&\quad \mbox{on } \p\Omega_T:=\p\Omega\times(0,T),
\label{eq1.2}\\
u &=u_0 &&\quad \mbox{in } \Omega\times\{t=0\}.\label{eq1.3}
\end{alignat}
Here $\Omega\subseteq \mathbf{R}^d\ (d=2,3)$ is a bounded domain, and $f(u)=F'(u)$, $F(u)$ 
is a nonconvex potential density function which takes its global minimum zero at $u=\pm 1$. 
In this paper, we only consider the following quartic potential density function:
\begin{equation}\label{eq1.4}
F(u)=\frac{1}{4}(u^2-1)^2.
\end{equation}

After eliminating the intermediate variable $w$ (called the chemical potential),
the above system reduces into a fourth order nonlinear PDE for $u$, which is known
as the Cahn-Hilliard equation in the literature.
This equation was originally introduced by John W. Cahn and John E. Hilliard in 
\cite{Cahn_Hilliard58} to describe the process of phase separation, by which the 
two components of a binary fluid spontaneously separate and form domains pure in each 
component. Here $u$ and $1-u$ denote respectively the concentrations of the two fluids, 
with $u=\pm 1$ indicating domains of the two components. We note that the equation 
\eqref{eq1.1}--\eqref{eq1.1a} differs from the original Cahn-Hilliard equation in the 
scaling of the time, and $t$ here corresponds to $\frac{t}{\epsilon}$ in the original 
formulation. $\epsilon$, which is positively small, is called the interaction length. 

Besides its important role in materials phase transition, the Cahn-Hilliard equation 
has been extensively studied due to its close relation with the Hele-Shaw problem. 
It was first formally proved by Pego \cite{Pego89} that the chemical potential 
$w:=-\epsilon\Delta u+\frac{1}{\epsilon}f(u)$ tends to a limit which satisfies 
the following free boundary problem known as the Hele-Shaw problem:
\begin{alignat}{2}
\Delta w &=0
&&\quad \mbox{in } \Omega\setminus\Gamma_t,\ t\in[0,T], \label{eq1.5}\\
\frac{\partial w}{\partial n} &=0
&&\quad \mbox{on } \p\Omega,\ t\in[0,T], \label{eq1.6}\\
w &=\sigma\kappa
&&\quad \mbox{on } \Gamma_t,\ t\in[0,T],\label{eq1.7}\\
V &=\frac{1}{2} \Bigl[ \frac{\partial w}{\partial n} \Bigr]_{\Gamma_t}
&&\quad \mbox{on } \Gamma_t,\ t\in[0,T],\label{eq1.8}  
\end{alignat}
as $\epsilon\searrow0$, provided that the Hele-Shaw problem has a unique classical solution. Here
\begin{equation}\label{eq1.10}
\sigma=\int_{-1}^{1}\sqrt{\frac{F(s)}{2}}ds.
\end{equation}
$\kappa$ and V represent the mean curvature and the normal velocity of the
interface $\Gamma_t$.  A rigorous justification that $u\rightarrow\pm1$ in the interior or exterior of $\Gamma_t$ 
for all $t\in[0,T]$ as $\epsilon\searrow0$ was given by Stoth \cite{Stoth96} for the radially 
symmetric case, and by Alikakos, Bates and Chen \cite{Alikakos94} for the general case. In 
addition, Chen \cite{Chen96} established the convergence of the weak solution of the
Cahn-Hilliard problem to a weak (or varifold) solution of the Hele-Shaw problem.

Moreover, the Cahn-Hilliard equation (together with the Allen-Cahn equation) has become a fundamental 
equation as well as a building block in the phase field methodology (or the diffuse interface 
methodology) for moving interface and free boundary problems arising from various
applications such as fluid dynamics, materials science, image processing and biology 
(cf. \cite{McFadden02,Feng06} and the references therein). The diffuse interface approach
provides a convenient mathematical formalism for numerically approximating the moving interface problems 
because explicitly tracking the interface is not needed in the diffuse interface formulation. 
The main advantage of the diffuse interface method is its ability to handle with ease singularities of 
the interfaces. Like many singular perturbation problems, the main computational issue is to resolve the (small)
scale introduced by the parameter $\epsilon$ in the equation. Computationally, the problem could become 
intractable, especially in three-dimensional cases if uniform meshes are used. This difficulty is often 
overcome by exploiting the predictable (at least for small $\epsilon$) PDE solution profile and by using 
adaptive mesh techniques (cf. \cite{Feng_Wu08} and the references therein), so fine 
meshes are only used in the diffuse interface region.

Numerical approximations of the Cahn-Hilliard equation have been extensively carried out in the
past thirty years (cf. \cite{Du_Nicolaides91,Elliott_French89,Feng_Prohl04} and 
the references therein).  On the other hand, the majority of these works were done for a 
fixed parameter $\epsilon$. The error bounds, which are obtained using the standard Gronwall 
inequality technique, show an exponential dependence on $1/\epsilon$. Such an estimate is clearly 
not useful for small $\epsilon$, in particular, in addressing the issue whether the
computed numerical interfaces converge to the original sharp interface of the Hele-Shaw problem. 
Better and practical error bounds should only depend on $1/\epsilon$ in some (low) polynomial orders
because they can be used to provide an answer to the above convergence question, which 
in fact is the best result (in terms of $\epsilon$) one can expect. The first such polynomial order 
in $1/\epsilon$ a priori estimate was obtained in \cite{Feng_Prohl05} for mixed finite element 
approximations of the Cahn-Hilliard problem \eqref{eq1.1}--\eqref{eq1.4}. 
In addition, polynomial order in $1/\epsilon$ a posteriori error estimates were obtained in \cite{Feng_Wu08}
for the same mixed finite element methods.  One of the key ideas employed in all these works is to use a 
nonstandard error estimate technique which is based on establishing a discrete spectrum estimate (using its 
continuous counterpart) for the linearized Cahn-Hilliard operator. An immediate corollary of the polynomial 
order in $1/\epsilon$ a priori and a posteriori error estimates is the convergence of the numerical 
interfaces of the underlying mixed finite element approximations to the Hele-Shaw flow before the onset 
of singularities of the Hele-Shaw flow as $\epsilon$ and mesh sizes $h$ and $k$ all tend to zero.

The objectives of this paper are twofold: Firstly, we develop some MIP-DG 
methods and to establish polynomial order in $1/\epsilon$ a priori error 
bounds, as well as to prove convergence of numerical interfaces for the MIP-DG methods. This goal is motivated by the 
advantages of DG methods in regard to designing adaptive mesh methods and algorithms, which is an 
indispensable strategy with the diffuse interface methodology. Secondly, we use the Cahn-Hilliard 
equation as another prototypical model problem \cite{Feng_Li14} to develop new analysis techniques 
for analyzing convergence of numerical interfaces to the underlying sharp interface for DG (and nonconforming 
finite element) discretizations of phase field models. To the best of our 
knowledge, no such convergence result and analysis technique is available in the literature for 
fourth order PDEs. The main obstacle for improving the finite element techniques of \cite{Feng_Prohl05} 
is that the DG (and nonconforming finite element) spaces are not subspaces of $H^1(\Ome)$.  As a result, 
whether the needed discrete spectrum estimate holds becomes a key question to answer.

This paper consists of four additional sections. In section \ref{sec-2} we first collect some a priori error 
estimates for problem \eqref{eq1.1}-\eqref{eq1.4}, which show the explicit dependence
on the parameter $\epsilon$. We then cite two important technical lemmas to be used in the later sections.
One of the lemma states the spectral estimate for the linearized Cahn-Hilliard operator.
In section \ref{sec-3}, we propose two fully discrete MIP-DG schemes for problem \eqref{eq1.1}--\eqref{eq1.4}, 
they differ only in their treatment of the nonlinear term. The first main result of this section 
is to establish a discrete spectrum estimate in the DG space, which mimics 
the spectral estimates for the differential operator and its finite element counterpart.
The second main result of this section is to derive optimal error bounds which depends 
on $1/\epsilon$ only in low polynomial orders for both fully discrete MIP-DG methods. 
In section \ref{sec-4}, using the refined error estimates of section \ref{sec-3}, we prove 
the convergence of the numerical interfaces of the fully discrete MIP-DG methods to 
the interface of the Hele-Shaw flow before the onset of the singularities as $\epsilon,h$ 
and $k$ all tend to zero.  Finally, in section \ref{sec-5} we provide some numerical experiments 
to gauge the performance of the proposed fully discrete MIP-DG methods.

\section{Preliminaries}\label{sec-2}

In this section, we shall collect some known results about problem \eqref{eq1.1}--\eqref{eq1.4} 
from \cite{Chen94,Feng_Prohl04,Feng_Prohl05}, which will be used in sections \ref{sec-3}
and \ref{sec-4}. Some general assumptions on the initial condition, 
as well as some energy estimates based on these assumptions, will be cited. 
Standard function and space notations are adopted in this paper \cite{Adams03,Brenner_Scott08}. 
We use $(\cdot,\cdot)$ and $\|\cdot\|_{L^2}$ to denote the standard inner product and
norm on $L^2(\Omega)$. Throughout this paper, $C$ denotes a generic positive constant 
independent of $\epsilon$, space and time step sizes $h$ and $k$, which may have different
values at different occasions. 

We begin with the following well known fact \cite{Alikakos94} that the Cahn-Hilliard equation 
\eqref{eq1.1}-\eqref{eq1.4} can be interpreted as the $H^{-1}$-gradient flow for the Cahn-Hilliard 
energy functional
\begin{equation}\label{eq2.1}
J_\epsilon(v):= \int_\Omega \Bigl( \frac\eps2 |\nabla v|^2+ \frac{1}{\epsilon} F(v) \Bigr)\, dx
\end{equation}

The following assumptions on the initial datum $u_0$ were made in \cite{Feng_Prohl04}, they were used
to derive a priori estimates for the solution of problem \eqref{eq1.1}--\eqref{eq1.4}.

{\bf General Assumption} (GA)
\begin{itemize}
\item[(1)] Assume that $m_0\in (-1,1)$ where
\begin{equation}\label{eq2.2}
m_0:=\frac{1}{|\Omega|}\int_{\Omega}u_0(x)dx. 
\end{equation}
\item[(2)] There exists a nonnegative constant $\sigma_1$ such that
\begin{equation}\label{eq2.3}
J_{\epsilon}(u_0)\leq C\epsilon^{-2\sigma_1}.
\end{equation}
\item[(3)]
There exists nonnegative constants $\sigma_2$, $\sigma_3$ and $\sigma_4$ such that
\begin{align}\label{eq2.4}
\big\|-\epsilon\Delta u_0 +\epsilon^{-1} f(u_0)\big\|_{H^{\ell}(\Omega)} \leq C\epsilon^{-\sigma_{2+\ell}},\quad
\ell=0,1,2.
\end{align}
\end{itemize}

Under the above assumptions, the following solution estimates were proved in \cite{Feng_Prohl04,Feng_Prohl05}.

\begin{proposition}\label{prop2.1}
The solution $u$ of problem \eqref{eq1.1}--\eqref{eq1.4} satisfies the following energy estimates:
\begin{align}
&\esssupI  \Bigl( \frac{\epsilon}{2}\|\nabla u\|_{L^2}^2 +\frac{1}{\epsilon}\|F(u)\|_{L^1} \Bigr)
+\begin{cases} \int_{0}^{\infty}\|u_t(s)\|_{H^{-1}}^2\, ds\\
\int_{0}^{\infty}\|\nabla w(s)\|_{L^2}^2\,ds 
\end{cases}  
\leq J_{\epsilon}(u_0), \label{eq2.5}\\
&\esssupI\|u\|_{L^4}^4\leq C(1+J_{\epsilon}(u_0)), \label{eq2.6}\\
&\esssupI \|u^2-1\|_{L^2}^2\leq C\epsilon J_{\epsilon}(u_0).  \label{eq2.7}
\end{align}
Moreover, suppose that \eqref{eq2.2}--\eqref{eq2.4} hold, $u_0\in H^4(\Omega)$ and $\p\Omega\in C^{2,1}$, 
then $u$ satisfies the additional estimates:
\begin{align}
&\frac{1}{|\Omega|}\int_{\Omega}u(x,t)\, dx=m_0 \quad\forall t\geq 0, \label{eq2.8}\\
&\int_0^{\infty}\|\Delta u\|_{L^2}^2ds\leq C\epsilon^{-(2\sigma_1+3)}, \label{eq2.9}\\
&\int_0^{\infty}\|\nabla\Delta u\|_{L^2}^2ds\leq C\epsilon^{-(2\sigma_1+5)},\label{eq2.10}\\
&\begin{cases}
\esssupI\|u_t\|_{H^{-1}}^2\\
\esssupI\|\nabla w\|_{L^{2}}^2
\end{cases}
+\epsilon\int_{0}^{\infty}\|\nabla u_t\|_{L^2}^2ds\leq C\epsilon^{-\max\{2\sigma_1+3,2\sigma_3\}},\label{eq2.11}\\
&\esssupI\|\Delta u\|_{L^2}\leq C\epsilon^{-\max\{\sigma_1+\frac{5}{2},\sigma_3+1\}},\label{eq2.12}\\
&\esssupI\|\nabla\Delta u\|_{L^2}\leq C\epsilon^{-\max\{\sigma_1+\frac{5}{2},\sigma_3+1\}},\label{eq2.13}\\
&\begin{cases}
\int_0^{\infty}\|u_t\|_{L^2}^2ds\\
\int_0^{\infty}\|\Delta w\|_{L^2}^2ds
\end{cases}
+\esssupI\epsilon\|\Delta u\|_{L^2}^2\leq C\epsilon^{-\max\{2\sigma_1
+\frac{7}{2},2\sigma_3+\frac{1}{2},2\sigma_2+1\}},\label{eq2.14}\\
&\epsilon\int_0^{\infty}\|\Delta u_t\|_{L^2}^2ds+\esssupI\|u_t\|_{L^2}^2
\leq C\epsilon^{-\max\{2\sigma_1+\frac{13}{2},2\sigma_3+\frac{7}{2},2\sigma_2+4,2\sigma_4\}},\label{eq2.15}\\
&\int_0^{\infty}\|\Delta^{-1} u_{tt}(s)\|_{H^{-1}}^2ds\leq C\epsilon^{-\max\{10\sigma_1+10,4\sigma_1+2\sigma_2+5,2\sigma_3-1\}}.\label{eq2.16}
\end{align}
Furthermore, if there exists $\sigma_5>0$ such that
\begin{equation}\label{eq2.17}
\mathop{\rm{lim}}_{s\rightarrow0^{+}}\limits\|\nabla u_t(s)\|_{L^2}\leq C\epsilon^{-\sigma_5},
\end{equation}
then there hold for $d=2,3$,
\begin{align}
&\esssupI\|\nabla u_t\|_{L^2}^2 + \epsilon\int_0^{\infty}\|\nabla\Delta u_t\|_{L^2}^2 ds 
\leq C\rho_0(\epsilon,d),\label{eq2.18}\\
&\int_0^{\infty}\|u_{tt}\|_{H^{-1}}^2ds \leq C\rho_1(\epsilon,d),\label{eq2.19}\\
&\esssupI\|\Delta^2 u\|_{L^2} \leq C\rho_2(\epsilon),\label{eq2.20}
\end{align}
where
\begin{align*}
\rho_0(\epsilon,d) &:=\epsilon^{-\frac{2}{6-d}\max\{2\sigma_1+5,2\sigma_3+2\}
-\max\{2\sigma_1+\frac{13}{2},2\sigma_3+\frac{7}{2},2\sigma_2+4\}} +\epsilon^{-2\sigma_5}\\
&\qquad 
+\epsilon^{-\max\{2\sigma_1+7,2\sigma_3+4\}}, \\
\rho_1(\epsilon, d) &:= \epsilon \rho_0(\epsilon,d), \\
\rho_2(\epsilon) &:=\epsilon^{-\max\{\sigma_1+5,\sigma_3+\frac{7}{2},\sigma_2+\frac{5}{2},\sigma_4+1\}}.
\end{align*}
\end{proposition}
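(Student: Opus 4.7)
The plan is to proceed estimate-by-estimate, climbing a regularity ladder, and in each step carefully book-keep the powers of $\epsilon$ introduced by the nonlinear term $\epsilon^{-1}f(u)$. The starting point is the $H^{-1}$-gradient flow identity
\begin{equation*}
\frac{d}{dt} J_\epsilon(u) + \|\nabla w\|_{L^2}^2 = 0,
\end{equation*}
obtained by testing \eqref{eq1.1} against $w$ and using \eqref{eq1.1a} together with the Neumann boundary conditions. Integrating in time and using $u_t=\Delta w$ (so that $\|u_t\|_{H^{-1}}=\|\nabla w\|_{L^2}$) yields \eqref{eq2.5} directly from assumption \eqref{eq2.3}. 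Estimates \eqref{eq2.6}--\eqref{eq2.7} then follow from the algebraic identity $F(u)=\tfrac14(u^2-1)^2$ together with a Poincaré--Sobolev control of $\|u\|_{L^4}$ by $\|\nabla u\|_{L^2}$ and the mean value $m_0$, and \eqref{eq2.8} follows by integrating \eqref{eq1.1} over $\Omega$.

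The next rung is \eqref{eq2.9}--\eqref{eq2.10}. Here I would test \eqref{eq1.1a} against $-\Delta u$ and $\Delta^2 u$ respectively; the nonlinear contribution $\epsilon^{-1}(f(u),-\Delta u) = \epsilon^{-1}\|\sqrt{f'(u)}\,\nabla u\|_{L^2}^2$ has a favorable sign modulo a term controlled by $\|\nabla u\|_{L^2}^2$, and what remains can be absorbed by Young's inequality against $\epsilon\|\Delta u\|_{L^2}^2$. Combined with $\int_0^\infty\|\nabla w\|^2\le J_\epsilon(u_0)\le C\epsilon^{-2\sigma_1}$, this yields the stated polynomial blow-up rates in $\epsilon^{-1}$.

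For \eqref{eq2.11}--\eqref{eq2.16} I would differentiate \eqref{eq1.1}--\eqref{eq1.1a} in time, obtaining $u_{tt}=\Delta w_t$ and $-\epsilon\Delta u_t+\epsilon^{-1}f'(u)u_t=w_t$, and then test with $w_t$ and with $-\Delta^{-1}u_t$. The delicate step is controlling $\epsilon^{-1}(f'(u)u_t,u_t)$: since $f'(u)=3u^2-1$ is not sign-definite, one splits it into a nonnegative part $3u^2 u_t^2$ and a lower order part $-u_t^2$, and absorbs the latter into the $\epsilon\|\nabla u_t\|_{L^2}^2$ term via interpolation. The resulting ODE-in-time inequalities involve only polynomial powers of $\epsilon^{-1}$ because at this stage one does not apply Gronwall against $\epsilon^{-1}$ but rather uses a direct integration together with the already-established bounds \eqref{eq2.5}--\eqref{eq2.10}. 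Bootstrapping through the chemical potential identity $w=-\epsilon\Delta u+\epsilon^{-1}f(u)$ and elliptic regularity for the Neumann problem then yields \eqref{eq2.12}--\eqref{eq2.14}, while testing the time-differentiated equation by $u_{tt}$ (after inverting $-\Delta$) gives \eqref{eq2.15}--\eqref{eq2.16}.

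Finally, the estimates \eqref{eq2.18}--\eqref{eq2.20} require the additional initial condition \eqref{eq2.17} to make sense of $\nabla u_t(0)$. The approach is to test the time-differentiated system against $\Delta w_t$ and against $-\Delta^2 u_t$, absorbing the worst nonlinear contribution $\epsilon^{-1}(f''(u)u_t^2,\Delta w_t)$ by a Gagliardo--Nirenberg interpolation in dimension $d\in\{2,3\}$; this is precisely where the factor $\tfrac{2}{6-d}$ in the exponent of $\rho_0(\epsilon,d)$ arises. The hardest part of the whole proposition is to keep the $\epsilon$-exponents honest through these interpolations: one must never apply Gronwall with a constant of size $\epsilon^{-c}$, and must instead rely on the sign of the principal term and dissipation in $\|\nabla w\|$ to close each estimate additively. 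Since the full bookkeeping is exactly that of \cite{Feng_Prohl04,Feng_Prohl05}, I would simply cite those references for the routine calculations and highlight the points above as the essential ingredients.
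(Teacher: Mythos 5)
The paper offers no proof of this proposition: it is quoted verbatim from \cite{Feng_Prohl04,Feng_Prohl05}, and your proposal, which sketches the standard energy-method/regularity-ladder argument and then defers the full $\epsilon$-bookkeeping to those same references, is consistent with the paper's treatment and with how the cited works actually proceed. The only formal slip worth noting is the expression $\epsilon^{-1}\|\sqrt{f'(u)}\,\nabla u\|_{L^2}^2$, which is not literally meaningful since $f'(u)=3u^2-1$ changes sign (one should write $\epsilon^{-1}\int_\Omega f'(u)|\nabla u|^2\,dx \geq -\epsilon^{-1}\|\nabla u\|_{L^2}^2$), but you immediately account for exactly this lower-order correction, so the argument stands.
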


The next lemma concerns with a lower bound estimate for the principal eigenvalue
of the linearized Cahn-Hilliard operator, a proof of this lemma can be found in \cite{Chen94}. 

\begin{lemma}\label{lem3.4}
Suppose that \eqref{eq2.2}--\eqref{eq2.4} hold. Given a smooth initial curve/surface $\Gamma_0$, 
let $u_0$ be a smooth function satisfying $\Gamma_0 = \{x\in\Omega; u_0(x)=0\}$ and some profile 
described in \cite{Chen94}. Let $u$ be the solution to problem \eqref{eq1.1}--\eqref{eq1.4}. 
Define $\mathcal{L}_{CH}$ as
\begin{equation}\label{eq3.25a}
\mathcal{L}_{CH} := \Delta\left(\eps\Delta-\frac{1}{\eps}f'(u)I\right).
\end{equation}
Then there exists $0<\epsilon_0<<1$ and a positive constant $C_0$ such that the principle 
eigenvalue of the linearized Cahn-Hilliard operator $\mathcal{L}_{CH}$ satisfies 
\begin{equation}\label{eq3.25}
\lambda_{CH}:=\mathop{\inf}_{\substack{0\neq\psi\in H^1{(\Omega)}\\ \Delta w=\psi}}
\limits\frac{\epsilon\|\nabla\psi\|_{L^2}^2+\frac{1}{\epsilon}(f'(u)\psi,\psi)}{\|\nabla w\|_{L^2}^2}\geq -C_0
\end{equation}
for $t\in [0,T]$ and $\eps\in (0,\eps_0)$.\\
\end{lemma}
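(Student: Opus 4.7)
The plan is to reduce the Cahn--Hilliard spectral bound to the (by now classical) Allen--Cahn spectral bound
$$ \eps \|\nab \phi\|_{L^2}^2 + \frac{1}{\eps}(f'(u)\phi, \phi) \geq -C \|\phi\|_{L^2}^2 \quad \forall \phi \in H^1(\Ome), $$
established by de Mottoni--Schatzman and Alikakos--Fusco under the same profile assumptions on $u$. The numerator of the Rayleigh quotient in \eqref{eq3.25} is identical to the one above; the work is to replace the $L^2$ denominator by $\|\nab w\|_{L^2}^2 = \|\psi\|_{H^{-1}}^2$, which is in general smaller and hence makes the target ratio more delicate.

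First I would exploit matched asymptotics to describe the structure of $u$: away from $\Ga_t$, $u \approx \pm 1$ and $f'(u) \approx 2$ is coercive, while in an $O(\eps)$ tubular neighborhood of $\Ga_t$, $u \approx U(d/\eps)$ with the one-dimensional profile $U(z)=\tanh(z/\sqrt{2})$. I would then decompose $\psi = \psi_1 + \psi_2$ with $\psi_1$ supported in the transition layer and $\psi_2$ outside. For $\psi_2$ the bulk coercivity of $f'(u)$ handles the estimate with room to spare. For $\psi_1$ I would further project onto the normal-direction unstable mode of the 1D operator $-\pa_z^2 + f'(U)$, writing $\psi_1 = \alpha(s,t)\, U'(d/\eps) + \psi_1^\perp$, where $\psi_1^\perp$ is $L^2$-orthogonal to $U'$ in the normal variable. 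The 1D spectral gap on the orthogonal complement of $U'$ then controls the contribution of $\psi_1^\perp$ with the correct sign.

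The main obstacle is the unstable-mode term $\alpha(s,t)\, U'(d/\eps)$, which is precisely where the quadratic form in the numerator can become negative. To absorb it, one must show that the constraint $\Delta w = \psi$ forces a lower bound of the form $\|\nab w\|_{L^2(\Ome)}^2 \gtrsim \eps \, \|\alpha\|_{L^2(\Ga_t)}^2$, so that the $\frac{1}{\eps}$ in the numerator is exactly balanced. Since $U'(d/\eps)$ concentrates at rate $\eps^{-1}$ across $\Ga_t$, the function $\alpha\, U'(d/\eps)$ behaves like an $\eps$-weighted surface distribution on $\Ga_t$, and the desired estimate reduces to a single-layer potential bound $H^{-1/2}(\Ga_t) \to H^1(\Ome)$ with the sharp $\eps$-scaling. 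Proving this bound with a constant $C_0$ independent of $\eps$ uses the smoothness of $\Ga_t$ up to the first Hele-Shaw singularity, the matched-asymptotic profile of $u_0$ prescribed in \cite{Chen94}, and trace estimates on $H(\div)$ fields; this is the technically delicate step carried out in Chen's paper and is what makes the result non-trivial compared to its Allen--Cahn analogue.
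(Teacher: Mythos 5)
First, a point of reference: the paper does not prove Lemma \ref{lem3.4} at all --- it is quoted from \cite{Chen94} with the single sentence ``a proof of this lemma can be found in \cite{Chen94}.'' So your sketch must be measured against Chen's argument rather than anything in this manuscript. Your skeleton (bulk coercivity of $f'(u)$ away from $\Gamma_t$, the one-dimensional spectral gap of $-\partial_z^2+f'(U)$ on the orthogonal complement of $U'$, and isolation of the translational mode $\alpha(s,t)\,U'(d/\epsilon)$ as the only dangerous direction) is indeed the correct skeleton and is how Chen proceeds.

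The genuine gap is in the final balancing step. The key inequality you propose, $\|\nabla w\|_{L^2(\Omega)}^2\gtrsim\epsilon\,\|\alpha\|_{L^2(\Gamma_t)}^2$, is false. Since $\int U'(d/\epsilon)\,dd\approx 2\epsilon$, the mode $\psi_1=\alpha\,U'(d/\epsilon)$ acts on test functions like $2\epsilon\,\alpha\,\delta_{\Gamma_t}$, so the trace theorem gives the sharp bound
\begin{equation*}
\|\nabla w\|_{L^2(\Omega)}^2=\|\psi\|_{H^{-1}(\Omega)}^2\ \gtrsim\ \epsilon^2\,\|\alpha\|_{H^{-1/2}(\Gamma_t)}^2,
\end{equation*}
which is one full power of $\epsilon$ smaller and in a strictly weaker norm than you claim (take $\alpha$ oscillating at frequency $\epsilon^{-1}$ along $\Gamma_t$ to see both losses). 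With the corrected bound, the ``exact balance'' against the $\frac{1}{\epsilon}$ in the numerator no longer closes: the Allen--Cahn estimate you start from only yields $Q(\psi_1)\geq -C\|\psi_1\|_{L^2}^2\approx -C\epsilon\|\alpha\|_{L^2(\Gamma_t)}^2$, and $\epsilon\|\alpha\|_{L^2(\Gamma_t)}^2$ cannot be dominated by $\epsilon^2\|\alpha\|_{H^{-1/2}(\Gamma_t)}^2$ with an $\epsilon$-independent constant. Two further ingredients, which constitute the heart of Chen's proof, are missing from your accounting: (i) on the translational mode the order-$\epsilon$ eigenvalue shift vanishes (first-order perturbation theory, using $\int U'U''\,dz=0$), so in fact $Q(\psi_1)\geq\epsilon^2\bigl(c\|\nabla_s\alpha\|_{L^2(\Gamma_t)}^2-C\|\alpha\|_{L^2(\Gamma_t)}^2\bigr)$ --- the tangential Dirichlet energy survives with a positive sign and the negative term is $O(\epsilon^2)$, not $O(\epsilon)$; and (ii) the interpolation $\|\alpha\|_{L^2(\Gamma_t)}^2\leq\delta\|\alpha\|_{H^1(\Gamma_t)}^2+C_\delta\|\alpha\|_{H^{-1/2}(\Gamma_t)}^2$, which lets that surviving tangential energy absorb the negative term and leaves a remainder of exactly the size $\epsilon^2\|\alpha\|_{H^{-1/2}(\Gamma_t)}^2\lesssim\|\nabla w\|_{L^2}^2$. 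Without (i) and (ii), the mode-by-mode argument you outline loses a factor of $\epsilon$ and cannot produce the $\epsilon$-independent constant $C_0$ asserted in \eqref{eq3.25}.
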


\begin{remark}
(a) A discrete generalization of \eqref{eq3.25} on $C^0$ finite element spaces was
proved in \cite{Feng_Prohl04,Feng_Prohl05}. It plays a pivotal role in the nonstandard
convergence analysis of \cite{Feng_Prohl04,Feng_Prohl05}. In the next section, we shall
prove another discrete generalization of \eqref{eq2.12} on the DG finite element space.

(b) The restriction on the initial function $u_0$ is needed to guarantee
that the solution $u(t)$ satisfies certain profile at later time $t>0$ which is required
in the proof of \cite{Chen94}. One example of admissible initial functions is
$u_0=\tanh(\frac{d_0(x)}{\eps})$, where $d_0(x)$ stands for the signed distance function
to the initial interface $\Gamma_0$. Such a $u_0$ is smooth when $\Gamma_0$ is smooth.
\end{remark}

\medskip
Next lemma can be regarded as a nonlinear generalization of the classical discrete
Gronwall lemma. It gives an upper bound estimate for a discrete sequence which 
satisfies a nonlinear inequality with Bernoulli-type nonlinearity, which will
be utilized crucially in the next section. A proof of this lemma can be found 
in \cite{Pachpatte} and its differential counterpart can be seen in \cite{Feng_Wu08}.

\begin{lemma}\label{lem2.3}
Let $\{S_{\ell} \}_{\ell\geq 1}$ be a positive nondecreasing sequence, 
$\{b_{\ell}\}_{\ell\geq 1}$ and $\{k_{\ell}\}_{\ell\geq 1}$ be nonnegative sequences, 
and $p>1$ be a constant. If
\begin{eqnarray}\label{eq2.131}
&S_{\ell+1}-S_{\ell}\leq b_{\ell}S_{\ell}+k_{\ell}S^p_{\ell} \qquad\mbox{for } \ell\geq 1,
\\ \label{eq2.141}
&S^{1-p}_{1}+(1-p)\mathop{\sum}\limits_{s=1}^{\ell-1}k_{s}a^{1-p}_{s+1}>0
\qquad\mbox{for } \ell\geq 2,
\end{eqnarray}
then
\begin{equation}\label{eq2.151}
S_{\ell}\leq \frac{1}{a_{\ell}} \Bigg\{S^{1-p}_{1}+(1-p)
\sum_{s=1}^{\ell-1}k_{s}a^{1-p}_{s+1}\Bigg\}^{\frac{1}{1-p}}\qquad\text{for}\ \ell\geq 2,  
\end{equation}
where
\begin{equation}\label{eq2.161}
a_{\ell} := \prod_{s=1}^{\ell-1} \frac{1}{1+b_{s}}. 
\end{equation}
\end{lemma}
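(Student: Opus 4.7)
The plan is to view \eqref{eq2.131} as a discrete Bernoulli-type inequality, the difference analogue of the ODE $y'(t) = b(t)\, y(t) + k(t)\, y(t)^p$, which is classically solved by combining an integrating factor with separation of variables. I will carry out both steps at the discrete level: first absorb the linear term $b_\ell S_\ell$ with the integrating-factor sequence $\{a_\ell\}$, and then convert the remaining pure Bihari-type inequality into a telescoping sum by exploiting the monotonicity of $x \mapsto x^{1-p}$.

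For the first step, rewrite \eqref{eq2.131} as $S_{\ell+1} \leq (1+b_\ell)\, S_\ell + k_\ell\, S_\ell^p$ and multiply through by $a_{\ell+1} = a_\ell/(1+b_\ell)$. Setting $T_\ell := a_\ell S_\ell$ yields the cleaner recursion
\begin{equation*}
T_{\ell+1} \leq T_\ell + \beta_\ell\, T_\ell^p, \qquad \beta_\ell := a_{\ell+1}\, a_\ell^{-p}\, k_\ell .
\end{equation*}
Since $a_{\ell+1}/a_\ell = 1/(1+b_\ell) \leq 1$ and $p>1$, the pointwise comparison $\beta_\ell \leq a_{\ell+1}^{1-p}\, k_\ell$ follows from $(a_{\ell+1}/a_\ell)^p \leq 1$; this matches the coefficient eventually required by \eqref{eq2.151}. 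Note also that $T_1 = S_1$ because $a_1$ is an empty product.

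The second step is a discrete mean-value argument applied to $g(x) = x^{1-p}$ on the interval between $T_\ell$ and $T_{\ell+1}$. When $T_{\ell+1} \geq T_\ell$, any intermediate point $\xi$ satisfies $\xi^{-p} \leq T_\ell^{-p}$; combining this with $1-p<0$, with $T_{\ell+1} - T_\ell \leq \beta_\ell T_\ell^p$, and with careful tracking of sign flips produces
\begin{equation*}
T_{\ell+1}^{1-p} - T_\ell^{1-p} \;\geq\; (1-p)\, \beta_\ell .
\end{equation*}
The complementary case $T_{\ell+1} < T_\ell$ makes the same inequality trivial because $x \mapsto x^{1-p}$ is decreasing for $p>1$ while $(1-p)\beta_\ell \leq 0$. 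Summing from $s=1$ to $\ell-1$ telescopes to $T_\ell^{1-p} \geq T_1^{1-p} + (1-p)\sum_{s=1}^{\ell-1} \beta_s$, and replacing $\beta_s$ by the larger quantity $a_{s+1}^{1-p} k_s$ (which again flips sign because $1-p < 0$) gives
\begin{equation*}
T_\ell^{1-p} \;\geq\; S_1^{1-p} + (1-p)\sum_{s=1}^{\ell-1} k_s\, a_{s+1}^{1-p} .
\end{equation*}

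Hypothesis \eqref{eq2.141} says exactly that this lower bound is strictly positive, so raising both sides to the negative power $1/(1-p)$ inverts the inequality, and dividing through by $a_\ell$ recovers \eqref{eq2.151}. I expect the main technical care to lie not in any single step but in the bookkeeping of inequality directions: the exponent $1-p$ is negative, the sequence $\{a_\ell\}$ is nonincreasing, and the comparison $\beta_\ell \leq a_{\ell+1}^{1-p} k_\ell$ uses $(a_{\ell+1}/a_\ell)^p \leq 1$, with each of these forcing a sign flip at a different stage. Once those signs are tracked consistently, the argument is essentially mechanical; the discrete MVT step is the only nontrivial idea, and even there the decreasing case reduces to monotonicity alone.
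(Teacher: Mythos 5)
Your proof is correct. Note that the paper does not actually prove Lemma \ref{lem2.3}; it only cites \cite{Pachpatte}, so there is no in-paper argument to compare against. Your two-stage argument --- first the discrete integrating factor $a_\ell$ reducing \eqref{eq2.131} to $T_{\ell+1}\leq T_\ell+\beta_\ell T_\ell^p$ with $T_\ell=a_\ell S_\ell$, then the discrete Bihari step via the mean value theorem for $x\mapsto x^{1-p}$ --- is the standard route for such Bernoulli-type difference inequalities and is essentially what the cited reference does. I checked the delicate points: $\beta_\ell=a_{\ell+1}a_\ell^{-p}k_\ell\leq a_{\ell+1}^{1-p}k_\ell$ does follow from $(a_{\ell+1}/a_\ell)^p\leq 1$; the one-step bound $T_{\ell+1}^{1-p}-T_\ell^{1-p}\geq(1-p)\beta_\ell$ holds in both the increasing case (via $\xi^{-p}\leq T_\ell^{-p}$ and two sign flips from $1-p<0$) and the decreasing case (trivially); $T_1=S_1$ since $a_1=1$; and hypothesis \eqref{eq2.141} is used exactly where needed, to guarantee positivity before applying the decreasing map $x\mapsto x^{1/(1-p)}$. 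The argument is complete as written.
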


\section{Fully discrete MIP-DG approximations}\label{sec-3}
In this section we present and analyze two fully discrete MIP-DG 
methods for the Cahn-Hilliard problem \eqref{eq1.1}--\eqref{eq1.4}. The primary 
goal of this section is to derive error estimates for the DG solutions that depend on $\epsilon^{-1}$ 
only in low polynomial orders, instead of exponential orders. 
As in the finite element case (cf. \cite{Feng_Prohl05}), the crux is to establish a discrete 
spectrum estimate for the linearized Cahn-Hilliard operator on the DG space. 

\subsection{Formulations of the MIP-DG method} \label{sec-3.1}
Let $\mathcal{T}_h=\{K\}_{K\in\Omega}$ be a quasi-uniform triangulation of $\Omega$ parameterized by $h > 0$.
For any triangle/tetrahedron $K \in \mathcal{T}_h$, we define
$h_K$ to be the diameter of $K$, and $h:=\mathop{\max}_{K\in\mathcal{T}_h}h_K$. 
The standard broken Sobolev space is defined as
\begin{equation}\label{eq3.1}
H^s(\mathcal{T}_h):=\bigl\{v\in L^{2}(\Omega);\, \forall K\in\mathcal{T}_h,\, v|_{K}\in H^{s}(K) \bigr\}.
\end{equation}
For any $K \in \mathcal{T}_h$, $P_r(K)$ denotes the set of all polynomials of degree at most $r(\geq 1)$ 
on the element $K$, and the DG finite element space $V_h$ is defined as
\begin{equation}\label{eq3.1a}
V_h:=\bigl\{v\in L^{2}(\Omega);\, \forall K\in\mathcal{T}_h,\, v|_{K}\in P_r(K) \bigr\}.
\end{equation}

Let $L^2_0$ denote the set of functions in $L^2(\Omega)$ with zero mean, and let
$\mathring{V}_h:=V_h\cap L^2_0$. We also define $\mathcal{E}_{h}^{I}$ to be the set of all interior 
edges/faces of $\cT_h$, $\mathcal{E}_{h}^{B}$ to be the set of all boundary edges/faces 
of $\cT_h$ on $\Ga=\pa\Ome$, and $\mathcal{E}_{h}:=\cE_h^I\cup \cE_h^B$.
Let $e$ be an interior edge shared by two elements $K_{1}$ and $K_{2}$. For a scalar 
function $v$, define 
\begin{equation*}
\left\{ v\right\} =\frac{1}{2}( v|_K+v|_{K^\prime}),
\quad [v] =v|_K-v|_{K^\prime},
\quad \text{on } e\in \cE_h^I,
\end{equation*}
where K is $K_{1}$ or $K_{2}$, whichever has the bigger global labeling
and $K^{\prime }$ is the other.
The $L^2$-inner product for piecewise functions over the mesh $\cT_h$ is naturally defined by
\begin{align*}
(v, w)_{\cT_h} :=\sum_{K\in\cT_h} \int_K vwdx.
\end{align*}

Let $0\leq t_0 < t_1 < \cdots < t_M =T$ be a partition of the interval $[0,\,T]$ with time step
$k=t_{n+1} - t_n$. Our fully discrete MIP-DG methods are defined as follows: for any $1\leq m\leq M$, 
$(U^m, W^m)\in V_h\times V_h$ are given by
\begin{alignat}{2}\label{eq3.8}
(d_tU^m,\eta)+a_h(W^m,\eta) &=0  &&\qquad \forall \,\eta\in V_h, \\
\epsilon a_h(U^m,v) + \frac{1}{\epsilon}(f^m,v)-(W^m,v) &=0 &&\qquad \forall\, v\in V_h, \label{eq3.9}
\end{alignat}
where 
\begin{align}\label{eq3.2a}
 a_h(u,v)=&\sum_{K\in\mathcal{T}_{h}}\int_{K}\nabla u\cdot\nabla v\,dx
-\sum_{e\in \mathcal{E}_{h}^I}\int_{e}\{\nabla u\cdot \mathbf{n}_{e}\}[v]\,ds\\
&-\sum_{e\in \mathcal{E}_{h}^I}\int_{e}\{\nabla v\cdot \mathbf{n}_{e}\}[u]\,ds
+\sum_{e\in \mathcal{E}_{h}^I}\int_{e}\frac{\sigma_{e}^{0}}{h_e}[u][v]\,ds,	\nonumber
\end{align}
and $\sigma_e^0>0$ is the penalty parameter.
There are two choices of $f^m$ considered in this paper, namely 
\[
f^m=(U^m)^3-U^{m-1} \qquad\mbox{and}\qquad  f^m=(U^m)^3-U^{m},
\]
which lead to the energy-splitting scheme and fully implicit scheme respectively.
$d_t$ is the (backward) difference operator defined by $ d_tU^m:=(U^m-U^{m-1})/k$ 
and $U^0:=\hP_hu_0$ (or $\widehat{Q}_h u_0$) is the starting value, with 
the finite element $H^1$ (or $L^2$) projection $\hP_h$ (or $\widehat{Q}_h$) 
to be defined below.  We refer to \cite{Feng_Li14} for a discussion why a continuous 
projection is needed for the initial condition.
We remark that only the fully implicit case was considered in \cite{Feng_Prohl04,Feng_Prohl05}
for the mixed finite element method.

In order to analyze the stability of \eqref{eq3.8}--\eqref{eq3.9}, 
we need some preparations. First, we introduce three projection operators that will be needed
to derive the error estimates in section \ref{sec-3.4}.  $P_h: H^{s}(\mathcal{T}_{h})\rightarrow {V}_h$ 
denotes the elliptic projection operator defined by
\begin{equation}\label{eq3.11}
a_{h}(u-P_hu,v_{h})+(u-P_hu,v_{h})=0 \qquad\forall\, v_{h}\in {V}_h,
\end{equation}
which has the following approximation properties (see \cite{Chen_Chen04}):
\begin{align}\label{eq3.12}
\|v-P_h v\|_{L^2(\cT_h)} +h\|\nab(v-P_h v)\|_{L^2(\cT_h)}
&\leq Ch^{\min\{r+1,s\}}\|u\|_{H^s(\cT_h)},\\
\frac{1}{|\ln h|^{\overline{r}}} \|v-P_h v\|_{L^\infty(\cT_h)}
+ h\|\nab(u-P_h u)\|_{L^\infty(\cT_h)}
&\leq Ch^{\min\{r+1,s\}}\|u\|_{W^{s,\infty}(\cT_h)}. \label{eq3.13}
\end{align}
Here $\overline{r}:=\min\{1, r\}-\min\{1, r-1\}$.  

Let $\hP_h: H^{s}(\mathcal{T}_{h})\rightarrow S_h:=V_h\cap C^0(\overline{\Ome})$ denote the 
standard continuous finite element elliptic projection, which is the counterpart of 
projection $P_h$. It has the following well-known property \cite{Feng_Prohl04,Feng_Prohl05}:
\begin{equation}\label{eq3.22bb}
\|u-\hP_hu\|_{L^{\infty}}\leq Ch^{2-\frac{d}{2}}\|u\|_{H^2}.
\end{equation}

Next, for any DG function $\Psi_h \in V_h$, we define its continuous finite element 
projection $\Psi_h^{FE}\in S^h$ by
\begin{equation}\label{eq3.7.1}
\tilde{a}_{h}(\Psi_h^{FE},v_h)=\tilde{a}_{h}(\Psi_h,v_h) \qquad \forall\, v_h\in S_h,
\end{equation}
where
\begin{equation*}\label{eq3.7.2}
\tilde{a}_{h}(u,v)=a_{h}(u,v)+\alpha(u,v),
\end{equation*}
and $\alpha$ is a parameter that will be specified later in section \ref{sec-3.3}.

A mesh-dependent $H^{-1}$ norm will also be needed. To the end, we introduce the inverse discrete 
Laplace operator $\Delta_h^{-1}: V_h\rightarrow \mathring{V}_h$ as follows: given 
$\zeta\in V_h$, let $\Delta_h^{-1}\zeta\in\mathring{V}_h$ such that
\begin{equation}\label{eq3.2}
a_h(-\Delta_h^{-1}\zeta,w_h)=(\zeta,w_h) \qquad \forall\, w_h\in \mathring{V}_h.
\end{equation}
We note that $\Delta_h^{-1}$ is well defined provided that $\sigma_e^0 >\sigma_*^0$ for 
some positive number $\sigma_*^0$ and for all $e\in \mathcal{E}_h$ because this condition 
ensures the coercivity of the DG bilinear form $a_h(\cdot,\cdot)$. 

We then define ``-1'' inner product by
\begin{equation}\label{eq3.3}
(\zeta,\xi)_{-1,h}:= a_h(-\Delta_h^{-1}\zeta,-\Delta_h^{-1}\xi)
=(\zeta,-\Delta_h^{-1}\xi)=(-\Delta_h^{-1}\zeta,\xi),
\end{equation}
and the induced mesh-dependent $H^{-1}$ norm is given by
\begin{equation}\label{eq3.4}
\|\zeta\|_{-1,h}:=\sqrt{(\zeta,\zeta)_{-1,h}}
=\mathop{\sup}_{0\neq\xi\in\mathring{V}_h}\limits\frac{(\zeta,\xi)}{|||\xi|||_a},
\end{equation}
where $|||\xi|||_a:=\sqrt{a_{h}(\xi,\xi)}$.  The following properties can be easily 
verified (cf. \cite{Aristotelous12}):
\begin{alignat}{2}\label{eq3.5}
|(\zeta,\xi)| &\leq\|\zeta\|_{-1,h}|||\xi|||_a &&\qquad\forall\,\xi\in V_h, \ \zeta\in \mathring{V}_h, \\
\|\zeta\|_{-1,h} &\leq C\|\zeta\|_{L^2} &&\qquad\forall\,\zeta\in \mathring{V}_h, \label{eq3.6}
\end{alignat}
and, if $\mathcal{T}_h$ is quasi-uniform, then
\begin{equation}\label{eq3.7}
\|\zeta\|_{L^2}\leq C h^{-1}\|\zeta\|_{-1,h}\qquad\forall\,\zeta\in \mathring{V}_h.
\end{equation}

\subsection{Discrete energy law and well-posedness}\label{sec-3.2}

In this subsection we first establish a discrete energy law, which mimics the 
differential energy law, for both fully discrete MIP-DG methods defined in
\eqref{eq3.8}--\eqref{eq3.9}. 
Based on this discrete energy law, we prove the existence and uniqueness
of solutions to the MIP-DG methods by recasting the schemes as convex
minimization problems at each time step. It turns out that the energy-splitting 
scheme is unconditionally stable but the fully implicit scheme is only conditionally
stable. 

\begin{theorem}\label{lem3.1}
Let $(U^m,W^m)\in V_h\times V_h$ be a solution to scheme \eqref{eq3.8}--\eqref{eq3.9}. 
The following energy law holds for any $h,k>0:$
\begin{align}\label{eq3.15}
&E_h(U^\ell)+k\sum_{m=1}^\ell \|d_tU^m\|_{-1,h}^2 +k^2\sum_{m=1}^\ell
\Bigg\{\frac{\epsilon}{2}|||d_t U^m|||_a^2 +\frac{1}{4\epsilon}\|d_t(U^m)^2\|_{L^2}^2 \\
&\hskip2cm
+\frac{1}{2\epsilon}\|U^md_tU^m\|_{L^2}^2\pm\frac{1}{2\epsilon}\|d_t U^m\|_{L^2}^2
\Bigg\}=E_h(U^0)\nonumber
\end{align}
for all $1\leq \ell\leq M,$ where
\begin{equation}\label{eq3.16}
E_h(U):=\frac{1}{4\epsilon}\|U^2-1\|_{L^2}^2+\frac{\epsilon}{2}|||U|||_a^2.
\end{equation}
\end{theorem}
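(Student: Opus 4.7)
The plan is to test the two equations of scheme \eqref{eq3.8}--\eqref{eq3.9} with carefully chosen discrete functions and then invoke polarization-type algebraic identities so that the free energy $E_h(U^m)$ appears telescopically. First I would observe that taking $\eta\equiv 1$ in \eqref{eq3.8}, together with $a_h(\cdot,1)=0$, yields $(d_tU^m,1)=0$, so $d_tU^m\in\mathring V_h$. Comparing \eqref{eq3.8} with the definition \eqref{eq3.2} of $-\Delta_h^{-1}$ on $\mathring V_h$ and using the coercivity of $a_h$ there, the zero-mean part of $W^m$ equals $-\Delta_h^{-1}(d_tU^m)$, whence
\begin{equation*}
|||W^m|||_a^2 \;=\; \|d_tU^m\|_{-1,h}^2 .
\end{equation*}
Next, test \eqref{eq3.9} with $v = U^m-U^{m-1} = k\,d_tU^m\in V_h$: the right-hand side $(W^m,\,U^m-U^{m-1})$ is, via \eqref{eq3.8} with $\eta=W^m$, equal to $-k|||W^m|||_a^2 = -k\|d_tU^m\|_{-1,h}^2$, producing the first dissipation term in \eqref{eq3.15}.

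I would then expand the linear and bilinear contributions using the polarization identity $2X(X-Y)=X^2-Y^2+(X-Y)^2$, applied to $a_h(U^m,U^m-U^{m-1})$ and to $(L^m,U^m-U^{m-1})$, where $L^m\in\{U^m,U^{m-1}\}$ is the non-cubic part of $f^m$. The sign of the resulting $\|U^m-U^{m-1}\|_{L^2}^2$ term depends on which of $U^m,U^{m-1}$ is used, which is exactly the source of the $\pm$ in \eqref{eq3.15}: the $+$ sign corresponds to the energy-splitting scheme ($L^m=U^{m-1}$) and the $-$ sign to the fully implicit scheme ($L^m=U^m$).

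The main obstacle is the cubic contribution $((U^m)^3,U^m-U^{m-1})$. Here I would invoke the pointwise identity
\begin{equation*}
4a^3(a-b) \;=\; (a^4-b^4) + 2a^2(a-b)^2 + (a^2-b^2)^2,
\end{equation*}
which is verified by direct expansion. Integrating over $\Omega$ splits the cubic term into a telescoping $L^4$ piece plus exactly the two nonnegative remainders $\tfrac{1}{2}\|U^m(U^m-U^{m-1})\|_{L^2}^2$ and $\tfrac{1}{4}\|(U^m)^2-(U^{m-1})^2\|_{L^2}^2$ appearing in \eqref{eq3.15}. Finally, combining all pieces, using $\tfrac{1}{4}\|U\|_{L^4}^4-\tfrac{1}{2}\|U\|_{L^2}^2 = \tfrac{1}{4}\|U^2-1\|_{L^2}^2 - \tfrac{|\Omega|}{4}$ to recognize $E_h(U)$, replacing every $U^m-U^{m-1}$ by $k\,d_tU^m$, and summing from $m=1$ to $\ell$, the telescoping yields \eqref{eq3.15} (the $|\Omega|/4$ constants cancel across steps).
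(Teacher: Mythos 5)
Your proof is correct and follows essentially the same route as the paper, which simply takes $\eta=-\Delta_h^{-1}d_tU^m$ in \eqref{eq3.8} and $v=d_tU^m$ in \eqref{eq3.9} (your choices $\eta=W^m$, $v=k\,d_tU^m$ are the same up to the factor $k$ and the substitution $(W^m,d_tU^m)=-|||W^m|||_a^2$), and your polarization identities, including the quartic one $4a^3(a-b)=(a^4-b^4)+2a^2(a-b)^2+(a^2-b^2)^2$, supply exactly the ``combining like terms'' the paper leaves to the reader. One immaterial slip: with the paper's convention $a_h(-\Delta_h^{-1}\zeta,w_h)=(\zeta,w_h)$, the zero-mean part of $W^m$ is $+\Delta_h^{-1}(d_tU^m)$ rather than $-\Delta_h^{-1}(d_tU^m)$, but this does not affect the identity $|||W^m|||_a^2=\|d_tU^m\|_{-1,h}^2$ or anything downstream.
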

Note that the sign ``$\pm$" in \eqref{eq3.15} takes ``$+$" when $f^m=(U^m)^3-U^{m-1}$ and ``$-$" 
when $f^m=(U^m)^3-U^{m}$.

The proof of the above theorem follows from taking $\eta=-\Delta^{-1} d_t U^m$ 
in \eqref{eq3.8} and $v=d_t U^m$ in \eqref{eq3.9}, adding the resulting two equations and
combining like terms. We leave the detailed calculations to the interested reader. 

\begin{corollary}
Let $\sigma_*^0>0$ be a sufficiently large constant. 
Suppose that $\sigma_e^0> \sigma_*^0$ for all $e\in \mathcal{E}_h$. Then scheme 
\eqref{eq3.8}--\eqref{eq3.9} is stable for all $h,k>0$ when $f^m=(U^m)^3-U^{m-1}$ and is 
stable for $h>0$ and $k=O(\epsilon^3)$ when $f^m=(U^m)^3-U^{m}$.
\end{corollary}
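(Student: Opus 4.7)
The plan is to read the stability directly from the discrete energy identity \eqref{eq3.15} of Theorem \ref{lem3.1}, with the two schemes distinguished by the sign of the ``$\pm$'' term. A preliminary step, independent of which $f^m$ is used, is the coercivity of $a_h(\cdot,\cdot)$: by a standard DG trace-plus-Young argument, there exists $\sigma_*^0>0$ such that whenever $\sigma_e^0>\sigma_*^0$ on every $e\in\mathcal{E}_h$ the form $a_h$ is positive semi-definite on $V_h$, with kernel equal to the constants, so $|||\cdot|||_a$ is a seminorm on $V_h$ that is a norm on $\mathring{V}_h$, every $\tfrac{\epsilon}{2}|||d_tU^m|||_a^2$ in \eqref{eq3.15} is non-negative, and $E_h(U)\geq 0$.

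For the energy-splitting scheme $f^m=(U^m)^3-U^{m-1}$, the sign in \eqref{eq3.15} is ``$+$'', so every summand on the left-hand side is non-negative. Discarding everything except $E_h(U^\ell)$ yields $E_h(U^\ell)\leq E_h(U^0)$ for every $\ell$ and every $h,k>0$, which is the claimed unconditional stability.

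For the fully implicit scheme $f^m=(U^m)^3-U^m$, the sign is ``$-$'', and the offending contribution $-\tfrac{k^2}{2\epsilon}\sum_{m=1}^{\ell}\|d_tU^m\|_{L^2}^2$ must be absorbed into the positive dissipation already present on the left. My first move is to test \eqref{eq3.8} with $\eta\equiv 1$; since $a_h(W^m,1)=0$, this gives $(d_tU^m,1)=0$, so $d_tU^m\in\mathring{V}_h$ for every $m$. This legitimizes the mesh-independent duality \eqref{eq3.5} applied with $\zeta=\xi=d_tU^m$, which yields
\begin{equation*}
\|d_tU^m\|_{L^2}^2 = (d_tU^m,d_tU^m) \leq \|d_tU^m\|_{-1,h}\,|||d_tU^m|||_a.
\end{equation*}
Inserting this bound and then applying Young's inequality with a free parameter $\tau>0$ splits the bad term into a multiple of $\|d_tU^m\|_{-1,h}^2$ and a multiple of $|||d_tU^m|||_a^2$, which I would absorb respectively into the good quantities $k\|d_tU^m\|_{-1,h}^2$ and $\tfrac{k^2\epsilon}{2}|||d_tU^m|||_a^2$ that already appear in \eqref{eq3.15}. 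A short calculation shows that both absorptions succeed simultaneously iff $\tfrac{k}{2\epsilon}\leq\tau\leq\epsilon^2$, which forces $k\leq 2\epsilon^3$, i.e., $k=O(\epsilon^3)$. Under this restriction the identity rearranges to $E_h(U^\ell)+(\text{non-negative dissipation})\leq E_h(U^0)$, the claimed conditional stability.

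The step I expect to be the main obstacle is precisely this absorption in the fully implicit case. Any naive replacement of \eqref{eq3.5} by an inverse estimate would yield either $\|d_tU^m\|_{L^2}^2\leq Ch^{-2}\|d_tU^m\|_{-1,h}^2$ or $\|d_tU^m\|_{L^2}^2\leq C|||d_tU^m|||_a^2$, producing a stability condition that either depends on $h$ or is incompatible with the $\epsilon$-scaling of the remaining terms. The zero-mean property of $d_tU^m$, which licenses the sharper duality \eqref{eq3.5}, is exactly what makes the mesh-independent condition $k=O(\epsilon^3)$ achievable.
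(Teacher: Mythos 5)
Your proof is correct and follows essentially the same route as the paper: the unconditional case is read off directly from the energy identity \eqref{eq3.15}, and the fully implicit case absorbs the term $-\tfrac{k^2}{2\epsilon}\sum_m\|d_tU^m\|_{L^2}^2$ via the norm interpolation $\|v\|_{L^2}^2\le\|v\|_{-1,h}\,|||v|||_a$ (this is exactly the paper's inequality \eqref{eq3.544}, since $a_h(\Delta_h^{-1}v,\Delta_h^{-1}v)=\|v\|_{-1,h}^2$) plus Young's inequality, yielding the condition $k=O(\epsilon^3)$. Your explicit verification that $(d_tU^m,1)=0$ and your working out of the absorption constants are just a more detailed rendering of the paper's one-line argument.
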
 

\begin{proof}
The first case holds trivially from \eqref{eq3.15}. In the second case, the ``bad term" $\|d_t U^m\|_{L^2}$ 
can be controlled by the ``good terms" $\|U^m\|_{-1,h}^2$ and $|||U^m|||_a^2$ by using the norm interpolation inequality 
\eqref{eq3.544} provided that $k=O(\epsilon^3)$. 
\end{proof}

\begin{theorem}\label{existence}
Suppose that $\sigma_e^0> \sigma_*^0$ for all $e\in \mathcal{E}_h$. Then scheme
\eqref{eq3.8}--\eqref{eq3.9} has a unique solution $(U^m,W^m)$ at each time step for 
for all $h,k>0$ in the case $f^m=(U^m)^3-U^{m-1}$ and for $h>0$ and $k=O(\epsilon^3)$
in the case $f^m=(U^m)^3-U^{m}$.
\end{theorem}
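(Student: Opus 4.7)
The plan is to recast the scheme at each time step as a finite-dimensional minimization problem on the mass-preserving affine subspace, and use strict convexity and coercivity of the functional to obtain a unique solution. As a first step I would decouple $U^m$ and $W^m$: testing \eqref{eq3.8} against the constant $\eta\equiv 1$ gives $\int_\Omega U^m\,dx=\int_\Omega U^{m-1}\,dx$, so $d_tU^m\in\mathring V_h$. Since $\sigma_e^0>\sigma_*^0$ ensures coercivity of $a_h$, the inverse discrete Laplacian $\Delta_h^{-1}$ of \eqref{eq3.2} is well defined, and \eqref{eq3.8} pins down the zero-mean part of $W^m$ as $\tilde W^m=\Delta_h^{-1}(d_tU^m)$, while \eqref{eq3.9} with $v\equiv 1$ fixes its mean. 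The whole system therefore reduces to a single nonlinear equation for $U^m$ on the affine subspace $\mathcal A^{m-1}:=\{U\in V_h:\int_\Omega U\,dx=\int_\Omega U^{m-1}\,dx\}$.

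A short computation shows that this reduced equation is the Euler--Lagrange equation on $\mathcal A^{m-1}$ of
\[
J_+^{\,m}(U):=\frac{1}{2k}\|U-U^{m-1}\|_{-1,h}^2+\frac{\epsilon}{2}|||U|||_a^2+\frac{1}{4\epsilon}\|U\|_{L^4}^4-\frac{1}{\epsilon}(U,U^{m-1})
\]
in the energy-splitting case and of
\[
J_-^{\,m}(U):=\frac{1}{2k}\|U-U^{m-1}\|_{-1,h}^2+\frac{\epsilon}{2}|||U|||_a^2+\frac{1}{4\epsilon}\|U^2-1\|_{L^2}^2
\]
in the fully implicit case. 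Existence and uniqueness of $U^m$ therefore reduce to existence and uniqueness of a minimizer of $J_\pm^{\,m}$ on the finite-dimensional set $\mathcal A^{m-1}$, after which $W^m$ is recovered uniquely from the decoupling. In the energy-splitting case every term of $J_+^{\,m}$ is convex: the two quadratic forms are non-negative by coercivity of $a_h$, the quartic $\|U\|_{L^4}^4$ is strictly convex, and the final term is linear. Coercivity is immediate from the quartic, so the direct method yields a unique minimizer for every $h,k>0$.

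The main obstacle is the fully implicit case, where the double-well term $\|U^2-1\|_{L^2}^2$ is not convex and its Hessian in the direction $\xi\in\mathring V_h$ equals $\frac{1}{\epsilon}\int_\Omega(3U^2-1)\xi^2\,dx$, bounded below only by the concave contribution $-\frac{1}{\epsilon}\|\xi\|_{L^2}^2$. To absorb this into the positive quadratic forms $\frac{1}{k}\|\xi\|_{-1,h}^2$ and $\epsilon|||\xi|||_a^2$ I would invoke the norm interpolation inequality \eqref{eq3.544}, $\|\xi\|_{L^2}^2\le C\|\xi\|_{-1,h}\cdot|||\xi|||_a$, and then apply Young's inequality with a balance parameter $\alpha\sim k/\epsilon$ to get
\[
\frac{1}{\epsilon}\|\xi\|_{L^2}^2\le\frac{1}{2k}\|\xi\|_{-1,h}^2+\frac{C^2 k}{2\epsilon^3}|||\xi|||_a^2.
\]
The last summand is at most $\frac{\epsilon}{2}|||\xi|||_a^2$ precisely when $k=O(\epsilon^3)$. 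Under this step-size restriction the full Hessian of $J_-^{\,m}$ remains strictly positive on $\mathring V_h$, so $J_-^{\,m}$ is strictly convex and (again by the quartic) coercive on $\mathcal A^{m-1}$, and existence and uniqueness of the minimizer follow as in the energy-splitting case. This matches exactly the condition in the preceding stability corollary, and reconciling that threshold with the variational structure is the technical heart of the argument.
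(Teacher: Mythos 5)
Your proposal is correct and follows essentially the same route as the paper: eliminate $W^m$ to reduce the scheme to a single equation for $U^m$, recast it as a minimization problem on the mass-preserving affine subspace (convex in the energy-splitting case), and in the fully implicit case absorb the concave part of the double well into the $\|\cdot\|_{-1,h}$ and $|||\cdot|||_a$ terms via the interpolation inequality \eqref{eq3.544} under $k=O(\epsilon^3)$. You merely write out explicitly the functionals, the Hessian computation, and the recovery of $W^m$ (including its mean), details the paper delegates to \cite{Aristotelous12,Feng_Li14}.
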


\begin{proof}
Setting $\eta=-\Delta_h^{-1} v$ in \eqref{eq3.8} we get
\[
\bigl(d_t U^m, v\bigr)_{-1,h} + \bigl( W^m, v\bigr) =0.
\]
Adding the above equation to \eqref{eq3.9} yields
\[
\bigl(d_t U^m, v\bigr)_{-1,h} + \epsilon a_h\bigl(U^m, v\bigr) 
+ \frac{1}{\epsilon} \bigl( f^m, v \bigr) =0.
\]
Hence, $U^m$ satisfies 
\begin{equation}\label{reduced_eq}
\bigl(U^m, v\bigr)_{-1,h} + k\epsilon a_h\bigl(U^m, v\bigr) 
+ \frac{k}{\epsilon} \bigl( f^m, v \bigr) =\bigl(U^{m-1}, v\bigr)_{-1,h}.
\end{equation}

In the case $f^m=(U^m)^3-U^{m-1}$ it is easy to check that \eqref{reduced_eq} can be recast 
as a convex minimization problem (cf. \cite{Aristotelous12,Feng_Li14}) whose well-posedness holds 
for all $h,k>0$. Hence, in this case there is a unique solution $U^m$ to \eqref{eq3.8}--\eqref{eq3.9}.
On the other hand, when $f^m=(U^m)^3-U^{m}$, there is an extra term $-k\epsilon^{-1} (U^m, v)$ comes out 
from the nonlinear term in \eqref{reduced_eq}. This extra term contributes a ``bad term"
$-k\epsilon^{-1}\|U^m\|_{L^2}^2$ to the functional of the minimization problem. Again, 
this term can be controlled by the ``good terms" $\|U^m\|_{-1,h}^2$ and $|||U^m|||_a^2$ in the functional
by using the norm interpolation inequality \eqref{eq3.544}, provided that $k=O(\epsilon^3)$. Hence, 
in the case $f^m=(U^m)^3-U^{m}$, there is a unique solution $U^m$ to \eqref{eq3.8}--\eqref{eq3.9} for all $h>0$ and 
$k=O(\epsilon^3)$. The proof is complete.
\end{proof}

\subsection{Discrete spectrum estimate on the DG space}\label{sec-3.3}
In this subsection, we shall establish a discrete spectrum estimate for the linearized Cahn-Hilliard 
operator on the DG space, which plays a vital role in our error estimates.

To the end, we first state a slightly modified version of a discrete spectrum estimate for 
the linearized Cahn-Hilliard operator on the continuous finite element space first  proved in 
\cite{Feng_Prohl04,Feng_Prohl05}. Due to the close similarity, we omit the proof of this modified
version and refer the interested reader to \cite{Feng_Prohl04,Feng_Prohl05}.

\begin{lemma}\label{lem3.5}
Suppose the assumptions of Lemma \ref{lem3.4} hold, and $C_0$ is the same as in \eqref{eq3.25}. 
$C_1$ and $C_2$ are defined by 
\begin{align}\label{eq3.26} 
&C_1 := \max_{ |\xi|\leq 2C_0} |f^{\prime\prime}(\xi)|, \\
& \label{eq3.27}
\|u-\hP_hu\|_{L^{\infty}((0,T);L^{\infty})} \leq C_2h^{2-\frac{d}{2}}\epsilon^{\min\{-\sigma_1-\frac52,-\sigma_3-1\}}. 
\end{align}
Then there exists $0<\epsilon_1<<1$ such that, for any $\eps\in (0,\eps_1)$, there holds
\begin{align}
\lambda_{CH}^{FE} \equiv\inf_{0\neq\psi_h\in L_0^2(\Omega)\cap S_h }
\frac{\epsilon\|\nabla\psi_h\|_{L^2}^2 +\frac{2-\eps^3}{2\epsilon}
\bigl(f'(\hP_hu)\psi_h,\psi_h\bigr)}{\|\nabla \Delta^{-1}\psi_h\|_{L^2}^2}\geq -(C_0+1),
\label{eq3.29}
\end{align}
provided that $h$ satisfies
\begin{equation}\label{eq3.30}
h^{2-\frac{d}{2}}\leq (C_1C_2)^{-1}\epsilon^{\max\{\sigma_1+\frac{11}{2},\sigma_3+4\}}. 
\end{equation}
Here $\Delta^{-1}: L^2_0(\Ome)\to H^1(\Ome)\cap L^2_0(\Ome)$ denotes the inverse Laplace operator.

\end{lemma}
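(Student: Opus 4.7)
The idea is to bootstrap the continuous spectrum bound of Lemma~\ref{lem3.4} up to the conforming finite element subspace $S_h\cap L^2_0$ by treating the replacement of $f'(u)$ by $f'(\hP_hu)$ together with the coefficient change $\frac{1}{\eps}\mapsto\frac{2-\eps^3}{2\eps}$ as small perturbations whose size is controlled precisely by the mesh condition \eqref{eq3.30}. Since $S_h\subset H^1(\Ome)$, every admissible $0\neq\psi_h\in S_h\cap L_0^2(\Ome)$ is automatically admissible in the continuous infimum, and the unique mean-zero $w$ solving $\Delta w=\psi_h$ with $\pa_n w=0$ yields $\nab w = \nab\Delta^{-1}\psi_h$, matching the denominator in \eqref{eq3.29}.

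First, I would apply Lemma~\ref{lem3.4} directly to such a $\psi_h$ to obtain $\eps\|\nab\psi_h\|_{L^2}^2 + \frac{1}{\eps}(f'(u)\psi_h,\psi_h)\geq -C_0\|\nab w\|_{L^2}^2$. Next, I would algebraically decompose
$$
\tfrac{1}{\eps}(f'(u)\psi_h,\psi_h) = \tfrac{2-\eps^3}{2\eps}\bigl(f'(\hP_hu)\psi_h,\psi_h\bigr) + R_1 + R_2,
$$
with $R_1:=\eps^{-1}\bigl((f'(u)-f'(\hP_hu))\psi_h,\psi_h\bigr)$ capturing the profile error and $R_2:=\tfrac{\eps^2}{2}\bigl(f'(\hP_hu)\psi_h,\psi_h\bigr)$ accounting for the coefficient change. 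The mean value theorem together with $|f''|\leq C_1$, the estimate \eqref{eq3.27}, and the mesh hypothesis \eqref{eq3.30} conspire to give $|R_1|\leq \eps^2\|\psi_h\|_{L^2}^2$; since $\|\hP_hu\|_{L^\infty}$ is uniformly bounded (using \eqref{eq3.27} and the a priori bounds on $u$), also $|R_2|\leq C\eps^2\|\psi_h\|_{L^2}^2$.

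The third step is a simple interpolation between norms: integration by parts against $\Delta w=\psi_h$ with $\pa_n w=0$ gives $\|\psi_h\|_{L^2}^2=-(\nab\psi_h,\nab w)_{L^2}$, and a Young inequality with a parameter of order $\eps$ produces
$$
\eps^2\|\psi_h\|_{L^2}^2 \leq \tfrac{\eps}{4}\|\nab\psi_h\|_{L^2}^2 + C\eps^3\|\nab w\|_{L^2}^2.
$$
Substituting this into the continuous estimate and collecting terms yields
$$
\eps\|\nab\psi_h\|_{L^2}^2 + \tfrac{2-\eps^3}{2\eps}\bigl(f'(\hP_hu)\psi_h,\psi_h\bigr)\geq -\bigl(C_0+O(\eps)\bigr)\|\nab w\|_{L^2}^2 - O(\eps)\|\nab\psi_h\|_{L^2}^2,
$$
so that for $\eps\in(0,\eps_1)$ with $\eps_1$ sufficiently small the $\|\nab\psi_h\|^2$ remainder is harmless and the $\|\nab w\|^2$ coefficient is at worst $-(C_0+1)$. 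Dividing by $\|\nab w\|_{L^2}^2=\|\nab\Delta^{-1}\psi_h\|_{L^2}^2$ and taking the infimum over $\psi_h$ gives \eqref{eq3.29}.

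The main obstacle is the power-counting in Step~2: the mesh condition \eqref{eq3.30} is engineered precisely so that the maximum exponent there and the minimum exponent in \eqref{eq3.27} are paired and differ by $3$, producing the factor $\eps^{-1}\cdot\eps^3=\eps^2$ that supplies the safety margin needed in Step~3. Once this $\eps^2$ buffer is established, the remaining arguments (Young's inequality, integration by parts, absorbing small terms) are routine and do not meaningfully interact with the structural hypotheses on $u$; one merely needs to verify at the end that $\eps_1$ can be taken independent of $h$ subject to \eqref{eq3.30}.
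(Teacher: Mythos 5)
The paper itself omits the proof of this lemma (it cites Feng--Prohl), so the comparison here is against the standard perturbation argument, which the paper reproduces in its DG analogue (Proposition \ref{prop2.3}). Your overall strategy is the right one and matches that machinery: conformity $S_h\subset H^1(\Ome)$ lets you feed $\psi_h$ into Lemma \ref{lem3.4}; the pairing of \eqref{eq3.27} with \eqref{eq3.30} does give $\|f'(u)-f'(\hP_hu)\|_{L^\infty}\leq \eps^3$ (your exponent arithmetic is correct, and this is exactly \eqref{eq3.41} in the paper), hence $|R_1|\leq \eps^2\|\psi_h\|_{L^2}^2$; and the interpolation $\|\psi_h\|_{L^2}^2=-(\nab\psi_h,\nab w)\leq\|\nab\psi_h\|_{L^2}\|\nab w\|_{L^2}$ is the right tool.

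The gap is in the final absorption step. After Young's inequality you are left with
$$
\eps\|\nab\psi_h\|_{L^2}^2+\tfrac{2-\eps^3}{2\eps}\bigl(f'(\hP_hu)\psi_h,\psi_h\bigr)\;\geq\;-\bigl(C_0+C\eps^3\bigr)\|\nab w\|_{L^2}^2-\tfrac{\eps}{4}\|\nab\psi_h\|_{L^2}^2,
$$
and the term $-\tfrac{\eps}{4}\|\nab\psi_h\|_{L^2}^2$ is \emph{not} harmless: it has the wrong sign, there is no leftover positive gradient term on the right to absorb it, and $\|\nab\psi_h\|_{L^2}$ cannot be dominated by $\|\nab\Delta^{-1}\psi_h\|_{L^2}$ (the ratio blows up like $h^{-2}$ on oscillatory modes). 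Moving it to the left only proves the estimate for the \emph{larger} quadratic form with gradient coefficient $\tfrac{5\eps}{4}$, which is strictly weaker than \eqref{eq3.29} since $f'$ is negative in the interfacial region. The missing idea is to reserve a positive gradient term \emph{before} invoking Lemma \ref{lem3.4}: since $\tfrac{2-\eps^3}{2\eps}=\tfrac{1-\eps^3/2}{\eps}$ and $\eps=(1-\tfrac{\eps^3}{2})\eps+\tfrac{\eps^4}{2}$, write
$$
Q=(1-\tfrac{\eps^3}{2})\Bigl[\eps\|\nab\psi_h\|_{L^2}^2+\tfrac1\eps\bigl(f'(u)\psi_h,\psi_h\bigr)\Bigr]+\tfrac{\eps^4}{2}\|\nab\psi_h\|_{L^2}^2+\tfrac{1-\eps^3/2}{\eps}\bigl((f'(\hP_hu)-f'(u))\psi_h,\psi_h\bigr),
$$
apply Lemma \ref{lem3.4} to the bracket (this also removes your $R_2$ entirely), and calibrate Young as $\eps^2\|\nab\psi_h\|_{L^2}\|\nab w\|_{L^2}\leq\tfrac{\eps^4}{2}\|\nab\psi_h\|_{L^2}^2+\tfrac12\|\nab w\|_{L^2}^2$, so that the reserve $\tfrac{\eps^4}{2}\|\nab\psi_h\|_{L^2}^2$ exactly cancels the bad gradient piece and the $\|\nab w\|_{L^2}^2$ contribution is $\tfrac12<1$, giving $Q\geq-(C_0+1)\|\nab w\|_{L^2}^2$. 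This is precisely the role played by the terms $\tfrac{\eps^4}{2-\eps^3}a_h(\Phi_h,\Phi_h)$ and \eqref{20150131_1} in the paper's proof of the DG version.
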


We are now ready to state the discrete spectrum estimate on the DG space.

\begin{proposition}\label{prop2.3}
Suppose the assumptions of Lemma \ref{lem3.4} hold. Let $u$ be the solution of \eqref{eq1.1}--\eqref{eq1.4} 
and $P_h u$ denote its DG elliptic projection. Assume
\begin{align}\label{eq2.23}
\esssupI\|u\|_{W^{1+r,\infty}} \leq C\epsilon^{-\gamma}, 
\end{align}
for a constant $\gamma$, then there exists $0<\epsilon_2<<1$ and an $\epsilon$-independent and $h$-independent constant $c_0 > 0$, 
such that for any $\epsilon\in(0,\eps_2)$, there holds 
\begin{equation}\label{eq2.24}
\lambda_{CH}^{DG}=\inf_{0\neq\Phi_h\in L^2_0{(\Omega)}\bigcap V_h}\frac{\epsilon a_h(\Phi_h,\Phi_h)
+\frac{1-\eps^3}{\eps}(f^\prime(P_hu)\Phi_h,\Phi_h)}{\|\nabla \Delta^{-1}\Phi_h\|_{L^2}^2}\geq -c_0,
\end{equation}
provided that $h$ satisfies the constraints
\begin{align}\label{eq2.25}
h^{2-\frac{d}{2}}&\leq (C_1C_2)^{-1}\epsilon^{\max\{\sigma_1+\frac{11}{2},\sigma_3+4\}},\\
h^{1+r}|\ln\,h|^{\bar{r}}&\leq (C_1C_3)^{-1}\epsilon^{\gamma+3}.\label{20150129}
\end{align}
where $C_1$ and $C_2$ are same as in Lemma \ref{lem3.5}, $\bar{r}$ and $C_3$ are defined by
\begin{align}\label{eq2.26a}
&\bar{r} =\min\{1,r\}-\min\{1,r-1\},\notag\\
&\|u-P_hu\|_{L^{\infty}((0,T);L^{\infty})} \leq C_3h^{1+r}|\ln\,h|^{\bar{r}}\epsilon^{-\gamma}. \notag 
\end{align}

\end{proposition}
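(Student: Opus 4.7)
The strategy is a perturbation argument that reduces the DG spectrum estimate to its continuous finite element analogue in Lemma \ref{lem3.5}. Given $\Phi_h\in V_h\cap L^2_0(\Omega)$, I would split it as $\Phi_h=\Phi_h^{FE}+\theta_h$, where $\Phi_h^{FE}\in S_h$ is the continuous finite element projection defined in \eqref{eq3.7.1} and $\theta_h:=\Phi_h-\Phi_h^{FE}\in V_h$ is the ``nonconforming part''. A first observation is that $\Phi_h^{FE}$ also has zero mean: testing \eqref{eq3.7.1} against the constant $v_h\equiv 1$ gives $\alpha(\Phi_h^{FE},1)=\alpha(\Phi_h,1)=0$, so $\Phi_h^{FE}\in S_h\cap L^2_0$ and Lemma \ref{lem3.5} is applicable to it.

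The key auxiliary step is to establish quantitative ``closeness'' of $\theta_h$ to zero. Using the definition of $\Phi_h^{FE}$ as a Ritz-type projection with respect to $\tilde{a}_h$, together with standard DG approximation theory (a Scott--Zhang-type averaging operator into $S_h$), I expect to derive estimates of the form
\begin{equation*}
\|\theta_h\|_{L^2}^2 + h^2\,a_h(\theta_h,\theta_h) \le C h\sum_{e\in\mce^I}\int_e [\Phi_h]^2\,ds
\le C h\, a_h(\Phi_h,\Phi_h),
\end{equation*}
so that $\theta_h$ is controlled by a small multiple of the DG energy of $\Phi_h$. In parallel, the elliptic regularity bound $\|\nabla\Delta^{-1}\Phi_h\|_{L^2}^2\ge \|\nabla\Delta^{-1}\Phi_h^{FE}\|_{L^2}^2 - O(h)\cdot(\text{higher-order terms})$ has to be extracted from the splitting of the right-hand side of the Laplace problem.

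With these tools in hand, I would expand the numerator of $\lambda_{CH}^{DG}$:
\begin{align*}
\epsilon a_h(\Phi_h,\Phi_h)+\tfrac{1-\eps^3}{\eps}\bigl(f'(P_hu)\Phi_h,\Phi_h\bigr)
&= \epsilon\,a_h(\Phi_h^{FE},\Phi_h^{FE}) + \tfrac{1-\eps^3}{\eps}\bigl(f'(\hP_hu)\Phi_h^{FE},\Phi_h^{FE}\bigr) \\
&\quad + R_1(\theta_h,\Phi_h^{FE}) + R_2\bigl(f'(P_hu)-f'(\hP_hu)\bigr),
\end{align*}
bound $R_2$ via $|f'(P_hu)-f'(\hP_hu)|\le C_1\|P_hu-\hP_hu\|_{L^\infty}$ combined with the triangle inequality $\|P_hu-\hP_hu\|_{L^\infty}\le \|u-P_hu\|_{L^\infty}+\|u-\hP_hu\|_{L^\infty}$, then invoke the $L^\infty$ approximation results \eqref{eq3.13}, \eqref{eq3.22bb}, assumption \eqref{eq2.23}, and conditions \eqref{eq2.25}, \eqref{20150129} to force these remainders to be $o(1)$ times $\|\nabla\Delta^{-1}\Phi_h^{FE}\|_{L^2}^2$. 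The gap between the coefficients $\tfrac{1-\eps^3}{\eps}$ (DG) and $\tfrac{2-\eps^3}{2\eps}$ (FE) is precisely what absorbs a ``bad'' contribution of size $\tfrac{1}{\eps}(f'(\hP_hu)\theta_h,\Phi_h^{FE})$ after Cauchy--Schwarz and Young's inequality. Finally, applying Lemma \ref{lem3.5} to $\Phi_h^{FE}$ yields the desired lower bound with $c_0:=C_0+1+O(1)$.

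The main obstacle will be the delicate bookkeeping of powers of $\eps^{-1}$ in the perturbation terms: the factor $\tfrac{1}{\eps}$ multiplying the $f'$ difference, together with the worst-case bound $\|u\|_{W^{1+r,\infty}}\le C\eps^{-\gamma}$, combine to produce a perturbation of size $\eps^{-(\gamma+3)}h^{1+r}|\ln h|^{\bar r}$, which is exactly why the mesh constraint \eqref{20150129} appears; correspondingly the $\eps^{-(\sigma_1+\tfrac{11}{2})}$-type constraint \eqref{eq2.25} is needed to absorb the error inherited from the finite element estimate \eqref{eq3.29}. Making sure every such term is dominated by the strictly positive slack $\tfrac{\eps^3}{\eps}(f'(\hP_hu)\Phi_h^{FE},\Phi_h^{FE})$-type contribution produced by the coefficient gap, uniformly in $\Phi_h$, is the technically delicate part of the argument.
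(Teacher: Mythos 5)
Your proposal follows essentially the same route as the paper's proof: decompose $\Phi_h$ through its continuous finite element projection $\Phi_h^{FE}$ of \eqref{eq3.7.1}, control $\Phi_h-\Phi_h^{FE}$ in $L^2$ by a small multiple of the DG energy of $\Phi_h$ (the paper gets $\|\Phi_h-\Phi_h^{FE}\|_{L^2}^2\leq Ch^{2\theta}a_h(\Phi_h-\Phi_h^{FE},\Phi_h-\Phi_h^{FE})$ by an Aubin--Nitsche duality argument, which you would also need, since the averaging operator alone only yields the energy-norm bound), replace $f'(P_hu)$ by $f'(\hP_hu)$ at the cost of an $O(\eps^3)$ perturbation in $L^\infty$ under the two mesh constraints, and finally invoke Lemma \ref{lem3.5} for $\Phi_h^{FE}$. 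Your observation that $\Phi_h^{FE}$ inherits the zero mean (test \eqref{eq3.7.1} with $v_h\equiv 1$) is correct and is indeed needed to apply the lemma.

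There is, however, one concrete soft spot: you propose to absorb all the perturbation terms into ``the strictly positive slack $\frac{\eps^3}{\eps}\bigl(f'(\hP_hu)\Phi_h^{FE},\Phi_h^{FE}\bigr)$-type contribution produced by the coefficient gap.'' That quantity is not sign-definite: $f'(u)=3u^2-1$ is negative throughout the transition layer, so it cannot serve as an absorber. In the paper the positivity comes from two other sources. First, the split $\eps=\eps\frac{1-\eps^3}{1-\eps^3/2}+\frac{\eps^4}{2-\eps^3}$ of the gradient coefficient leaves a spare $\frac{\eps^4}{2-\eps^3}a_h(\Phi_h,\Phi_h)$, which, combined with the interpolation inequality \eqref{eq3.544}, soaks up every $\|\Phi_h\|_{L^2}^2$ remainder at the price of a harmless $Ca_h(\Delta_h^{-1}\Phi_h,\Delta_h^{-1}\Phi_h)$. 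Second, and crucially, the zeroth-order term $\alpha(u,v)$ in $\tilde a_h$ with the specific choice $\alpha=O(\widehat{C}\eps^{-1})$ generates the positive term $2\alpha\|\Phi_h^{FE}-\Phi_h\|_{L^2}^2$ in \eqref{sec3:spec2}, which is what dominates the $\eps$-independent negative term $-C\|\Phi_h-\Phi_h^{FE}\|_{L^2}^2$ produced when $\|\nabla\Delta^{-1}\Phi_h^{FE}\|_{L^2}^2$ is traded for $\|\nabla\Delta^{-1}\Phi_h\|_{L^2}^2$ in the denominator. Your outline defines $\Phi_h^{FE}$ through $\tilde a_h$ but never exploits or fixes $\alpha$; supplying that choice (or, alternatively, imposing a strictly stronger mesh condition of the form $h^{2\theta}\lesssim\eps^4$ so the spare $a_h$-slack alone suffices) is the missing ingredient needed to close the argument.
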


\begin{proof}
By Proposition 2 in \cite{Feng_Prohl04}, under the mesh constraint \eqref{eq2.25}, we have
\begin{equation}\label{eq3.41}
\|f^\prime(\hP_hu)- f^\prime(u)\|_{L^{\infty}((0,T);L^{\infty})}\leq \epsilon^3.
\end{equation}
Similarly, under the mesh condition \eqref{20150129}, we can show that for any $\epsilon>0$, there holds
\begin{equation}\label{eq3.39}
\|f^\prime(P_hu)-f^\prime(u)\|_{L^{\infty}((0,T);L^{\infty})}\leq\epsilon^3.
\end{equation}
It follows from \eqref{eq3.41} and \eqref{eq3.39} that 
\begin{equation}\label{eq3.43}
\|f^\prime(P_hu)- f^\prime(\hP_hu)\|_{L^{\infty}((0,T);L^{\infty})}\leq 2\epsilon^3
 \quad \mbox{and} \quad f^\prime(P_hu)\geq f^\prime(\hP_hu)-2\epsilon^3.
\end{equation}
Therefore,
\begin{align}\label{sec3:spec1}
&\hskip-3mm 
\epsilon a_h(\Phi_h,\Phi_h)+\frac{1-\eps^3}{\epsilon}\bigl(f^\prime(P_hu)\Phi_h,\Phi_h\bigr) \\
&\geq\epsilon a_h(\Phi_h,\Phi_h)+\frac{1-\eps^3}{\epsilon}\bigl(f^\prime(\hP_hu)\Phi_h,\Phi_h\bigr)
-2\eps^2(1-\eps^3)\|\Phi_h\|_{L^2}^2\nonumber\\
&= \epsilon\frac{1-\epsilon^3}{1-\frac{\eps^3}{2}}a_{h}(\Phi_h,\Phi_h)
+\frac{1-\epsilon^3}{\epsilon} \bigl(f^\prime(\hP_hu)\Phi_h,\Phi_h \bigr)  \notag \\
&\hskip2mm
-2\eps^2(1-\epsilon^3)\|\Phi_h\|_{L^2}^2
+\frac{\epsilon^4}{2-\eps^3}a_{h}(\Phi_h,\Phi_h) \nonumber\\
&=\epsilon\frac{1-\epsilon^3}{1-\frac{\eps^3}{2}}a_{h}(\Phi_h,\Phi_h)
+\frac{1-\epsilon^3}{\epsilon}\int_{\Omega}f'(\hP_hu)\Bigl((\Phi_h)^2-(\Phi_h^{FE})^2\Bigr)\,dx\nonumber\\
&\hskip2mm
+\frac{1-\epsilon^3}{\epsilon}\int_{\Omega}f'(\hP_hu)(\Phi_h^{FE})^2dx
-2\epsilon^2(1-\epsilon^3)\|\Phi_h\|_{L^2}^2+\frac{\epsilon^4}{2-\eps^3}a_{h}(\Phi_h,\Phi_h).  \notag
\end{align}

Next, we derive a lower bound for each of the first two terms on the right-hand side of \eqref{sec3:spec1}.
Notice that the first term can be rewritten as 
\begin{align}\label{sec3:spec2}
&a_{h}(\Phi_h,\Phi_h)
=a_{h}(\Phi_h-\Phi_h^{FE},\Phi_h-\Phi_h^{FE})
+2a_{h}(\Phi_h,\Phi_h^{FE}) - a_{h}(\Phi_h^{FE},\Phi_h^{FE})  \\
&\hskip 0.62in
=a_{h}(\Phi_h-\Phi_h^{FE},\Phi_h-\Phi_h^{FE})
+\|\nabla\Phi_h^{FE}\|_{L^2}^2 + 2\alpha \|\Phi_h^{FE}-\Phi_h\|_{L^2}^2 \notag \\
&\hskip 0.8in
+2\alpha \bigl( \Phi_h^{FE}-\Phi_h,\Phi_h \bigr). \notag
\end{align}

To bound $\|\Phi_h-\Phi_h^{FE}\|_{L^2}$ from above, we consider the following auxiliary problem:
\begin{align*} 
\widetilde{a}_h(\phi, \chi) = \bigl(\Phi_h-\Phi_h^{FE}, \chi\bigr) \qquad\forall\, \chi\in H^1(\Ome).
\end{align*}
For $\sigma_e^0>\sigma_*^0$ for all $e\in \mathcal{E}_h$, the above problem has a unique
solution $\phi\in H^{1+\theta}(\Omega)$ for $0<\theta\leq1$ such that 
\begin{equation}\label{eq3.47}
\|\phi\|_{H^{1+\theta}(\Omega)}\leq C\|\Phi_h-\Phi_h^{FE}\|_{L^2}\qquad \text{for}\ \theta\in (0, 1].
\end{equation}
By the definition of $\Phi_h^{\FE}$, we immediately get the following Galerkin orthogonality:
\[
\widetilde{a}_h \bigl(\Phi_h-\Phi_h^{FE}, \chi_h\bigr)=0 \qquad\forall \chi_h\in S_h,
\]
It follows from the duality argument (cf. \cite[Theorem 2.14]{Riviere08}) that 
\begin{align}\label{eq3.50}
\|\Phi_{h}-\Phi_h^{FE}\|_{L^2}^2
&\leq Ch^{2\theta} \widetilde{a}_h(\Phi_{h}-\Phi_h^{FE},\Phi_{h}-\Phi_h^{FE}) \\
&\leq Ch^{2\theta} a_h(\Phi_{h}-\Phi_h^{FE},\Phi_{h}-\Phi_h^{FE}) 
+ Ch^{2\theta} \alpha \|\Phi_{h}-\Phi_h^{FE}\|_{L^2}^2 . \notag
\end{align}
For all $h$ satisfying $Ch^{2\theta}\alpha<1$, we get 
\begin{equation}\label{20150214_1}
\|\Phi_{h}-\Phi_h^{FE}\|_{L^2}^2
\leq \frac{Ch^{2\theta}}{1-Ch^{2\theta}\alpha} a_h(\Phi_{h}-\Phi_h^{FE},\Phi_{h}-\Phi_h^{FE}).
\end{equation}
Now the last term on the right-hand side of \eqref{sec3:spec2} can be bounded as follows:
\begin{align}\label{eq3.50add1}
2\alpha\bigl(\Phi_h^{FE}-\Phi_h,\Phi_h\bigr)&\geq -2\alpha\|\Phi_h^{FE}-\Phi_h\|_{L^2}\|\Phi_h\|_{L^2}\\
&\geq -2\alpha\sqrt{\frac{Ch^{2\theta} a_h(\Phi_h-\Phi^{FE}_h,\Phi_h-\Phi_h^{FE})}{1-Ch^{2\theta}\alpha}} \,\|\Phi_h\|_{L^2}\notag\\
&\geq -\frac12a_h(\Phi_h-\Phi^{FE}_h,\Phi_h-\Phi_h^{FE})
 -\frac{2C\alpha^2h^{2\theta}}{1-Ch^{2\theta}\alpha}\|\Phi_h\|_{L^2}^2. \notag
\end{align}

The second term on the right-hand side of \eqref{sec3:spec1} can be bounded by
\begin{align}
&\hskip-3mm
\int_{\Omega}f'(\hP_hu)\bigl( (\Phi_{h})^2-(\Phi^{FE}_{h})^2 \bigr)\,dx 
\geq -C \int_{\Omega} \bigl|(\Phi_{h})^2-(\Phi^{FE}_{h})^2 \bigr|\,dx \label{sec3:spec3}\\
&\hskip2mm
= -C\int_{\Omega}\Bigl| -\bigl(\Phi_{h}-\Phi_h^{FE}\bigr)^{2} 
+ 2\Phi_h \bigl(\Phi_{h}-\Phi_h^{FE} \bigr) \Bigr|\, dx\nonumber\\
&\hskip2mm
\geq -C\|\Phi_h-\Phi_h^{FE}\|_{L^2}^2
-\frac{\eps^3(1-\epsilon^3)}{1-\frac{\eps^3}{2}}\|\Phi_h\|_{L^2}^2
-C\frac{1-\frac{\eps^3}{2}}{\eps^3(1-\eps^3)}\|\Phi_h-\Phi_h^{FE}\|_{L^2}^2.  \nonumber
\end{align}
Here we have used the facts that 
\begin{equation}\label{eq3.44}
\|u\|_{L^{\infty}((0,T);L^{\infty})}\leq C, \qquad 
|f^{\prime}(\hP_hu)|\leq |f^{\prime}(u)|+\epsilon^3\leq C.
\end{equation}

Substituting \eqref{20150214_1} into \eqref{sec3:spec3} yields
\begin{align}\label{eq3.53}
&\frac{1-\epsilon^3}{\epsilon}\int_{\Omega}f'(\hP_hu)\bigl((\Phi_{h})^2-(\Phi^{FE}_{h})^2\bigr)\,dx  \\
&\hskip5mm
\geq -\gamma_{3}\frac{\eps(1-\epsilon^3)}{1-\frac{\eps^3}{2}} a_h(\Phi_{h}-\Phi_h^{FE},\Phi_{h}-\Phi_h^{FE})
-\frac{\eps^2(1-\epsilon^3)}{1-\frac{\eps^3}{2}} \|\Phi_h\|_{L^2}^2, \notag
\end{align}
where 
\[
\gamma_3\geq \frac{Ch^{2\theta}}{1-Ch^{2\theta}\alpha}\cdot 2C\frac{1-\frac{\eps^3}{2}}{\epsilon(1-\eps^3)} 
\Bigl (1+\frac{1-\frac{\eps^3}{2}}{\eps^3(1-\eps^3)} \Bigr),
\]
and $h$ is chosen small enough such that $\gamma_3<1/4$. 

The term $\|\Phi_h\|_{L^2}^2$ can be bounded by
\begin{align} \label{eq3.544}
\|\Phi_h\|_{L^2}^2 &= (\Phi_h,\Phi_h)=a_h(\Delta_h^{-1}\Phi_h,\Phi_h) 
\leq a_h(\Delta_h^{-1}\Phi_h,\Delta_h^{-1}\Phi_h)^{\frac{1}{2}}a_h(\Phi_h,\Phi_h)^{\frac{1}{2}} \\
&\leq \frac{\rho}{2} a_h(\Delta_h^{-1}\Phi_h,\Delta_h^{-1}\Phi_h)+\frac{1}{2\rho}a_h(\Phi_h,\Phi_h)\notag
\end{align}
for any constant $\rho>0$. 

Adding the fifth term on the right-hand side of \eqref{sec3:spec1},
the last term on the right-hand side of \eqref{eq3.50add1} and that of \eqref{eq3.53}, 
we get for all $h$ satisfying $2C\alpha^2h^{2\theta}/(1-Ch^{2\theta}\alpha) \leq\eps$
\begin{align} \label{20150131_1}
&-\biggl( \frac{\eps(1-\epsilon^3)}{1-\frac{\eps^3}{2}}\,\frac{2C\alpha^2h^{2\theta}}{1-Ch^{2\theta}\alpha}
+\frac{3\eps^2(1-\epsilon^3)}{1-\frac{\eps^3}{2}} \biggr) \|\Phi_h\|_{L^2}^2
\geq -\frac{4\eps^2(1-\epsilon^3)}{1-\frac{\eps^3}{2}}\|\Phi_h\|_{L^2}^2 \\
&\hskip 1.4in
\geq  -\frac{\eps^4}{2(2-\eps^3)}a_h(\Phi_h^{},\Phi_h^{})
-Ca_h(\Delta_h^{-1}\Phi_h^{},\Delta_h^{-1}\Phi_h^{}). \notag
\end{align}

Combining \eqref{sec3:spec2}, \eqref{eq3.50add1}, \eqref{eq3.53} and \eqref{20150131_1} with \eqref{sec3:spec1}, we have
\begin{align} \label{sec3:spec4}
&\hskip-2mm
\epsilon a_{h}(\Phi_h,\Phi_h)+\frac{1-\epsilon^3}{\epsilon}\int_{\Omega}f'(P_hu)(\Phi_{h})^2\,dx   \\
& \geq \frac{\epsilon(1-\epsilon^3)}{4-2\eps^3}
a_h(\Phi_{h}-\Phi_h^{FE},\Phi_{h}-\Phi_h^{FE})
+\frac{2\alpha\epsilon(1-\epsilon^3)}{1-\frac{\eps^3}{2}}\|\Phi_h^{FE}-\Phi_h\|_{L^2}^2
 \notag \\
&+\frac{\eps(1-\epsilon^3)}{1-\frac{\eps^3}{2}}\|\nabla \Phi_h^{FE}\|_{L^2}^2 
-Ca_h(\Delta_h^{-1}\Phi_h,\Delta_h^{-1}\Phi_h) \notag\\
& +\frac{1-\epsilon^3}{\epsilon}\int_{\Omega}f'(\hP_hu)(\Phi^{FE}_{h})^2\,dx
+\frac{\epsilon^4}{2(2-\eps^3)}a_h(\Phi_h,\Phi_h). \notag
\end{align}
Applying the spectrum estimate \eqref{eq3.29}, we get
\begin{align*}
&\epsilon\frac{1-\epsilon^3}{1-\frac{\eps^3}{2}}\|\nabla \Phi_h^{FE}\|_{L^2}^2
+\frac{1-\epsilon^3}{\epsilon}\int_{\Omega}f'(\hP_hu)(\Phi^{FE}_{h})^2\,dx \\
&\hskip5mm
=\frac{1-\epsilon^3}{1-\frac{\eps^3}{2}}\left(\epsilon\|\nabla \Phi_h^{FE}\|_{L^2}^2
+\frac{1-\frac{\eps^3}{2}}{\epsilon}\int_{\Omega}f'(\hP_hu)(\Phi^{FE}_{h})^2dx\right)	\notag	\\
&\hskip5mm
\geq -\frac{1-\epsilon^3}{1-\frac{\eps^3}{2}}(C_0+1)\|\nabla \Delta^{-1}\Phi_h^{FE}\|_{L^2}^2, \notag
\end{align*}
which together with \eqref{sec3:spec4} implies that
\begin{align}\label{sec3:spec5} 
&\epsilon a_{h}(\Phi_h,\Phi_h)+\frac{1-\epsilon^3}{\epsilon}\int_{\Omega}f'(P_hu)(\Phi_{h})^2\,dx \\
&\geq -Ca_h(\Delta_h^{-1}\Phi_h,\Delta_h^{-1}\Phi_h)-C\|\nabla \Delta^{-1}\Phi_h^{FE}\|_{L^2}^2 
+\frac{2\alpha\epsilon(1-\epsilon^3)}{1-\frac{\eps^3}{2}}\|\Phi_h^{FE}-\Phi_h\|_{L^2}^2. \notag
\end{align}

By the stability of $\Delta^{-1}$, we have
\begin{align*}
\|\nabla\Delta^{-1}(\Phi_h-\Phi_h^{FE})\|_{L^2}^2\leq \widehat{C}\|\Phi_h-\Phi_h^{FE}\|_{L^2}^2,
\end{align*}
which together with the triangle inequality yields
\begin{align*}\label{20150206_2}
\|\nabla \Delta^{-1}\Phi_h^{FE}\|_{L^2}^2
&\leq2\|\nabla \Delta^{-1}\Phi_h\|_{L^2}^2 +2\widehat{C} \|\Phi_h-\Phi_h^{FE}\|_{L^2}^2.
\end{align*}
 
Similarly, since $\Delta_h^{-1}\Phi_h$ is the elliptic projection of $\Delta^{-1}\Phi_h$, there holds
\begin{equation*}
a_h(\Delta_h^{-1}\Phi_h,\Delta_h^{-1}\Phi_h)\leq C\|\nabla\Delta^{-1}\Phi_h\|_{L^2}^2.
\end{equation*}
Therefore, choosing $\alpha=O(\widehat{C}\eps^{-1})$, \eqref{sec3:spec5} can be further reduced into
\begin{equation*}
\epsilon a_{h}(\Phi_h,\Phi_h)+\frac{1-\epsilon^3}{\epsilon}\int_{\Omega}f'(P_hu)(\Phi_{h})^2\,dx
\geq -c_0\|\nabla \Delta^{-1}\Phi_h\|_{L^2}^2  \label{sec3:spec5.5}
\end{equation*}
for some $c_0>0$. This proves \eqref{eq2.24}, and the proof is complete.
\end{proof}

\subsection{Error analysis}\label{sec-3.4}
In this subsection, we shall derive some optimal error estimates for the proposed MIP-DG schemes 
\eqref{eq3.8}--\eqref{eq3.9}, in which the constants in the error bounds depend on 
$\eps^{-1}$ only in low polynomial orders, instead of exponential orders. The key to
obtaining such refined error bounds is to use the discrete spectrum estimate \eqref{eq2.24}.
In addition, the nonlinear Gronwall inequality presented in Lemma \ref{lem2.3} also plays an 
important role in the proof. To ease the presentation, we set $r=1$ in this subsection and section \ref{sec-4},
and generalization to $r>1$ can be proven similarly.

The main results of this subsection are stated in the following theorem.

\begin{theorem}\label{thm3.1} 
Let $\{(U^m, W^m)\}_{m=0}^M$ be the solution of scheme (\ref{eq3.8})--(\ref{eq3.9}) with
$r=1$.  Suppose that (GA) holds and $\sigma_e^0>\sigma_*^0$ for all $e\in \mathcal{E}_h$, and define
\begin{align}
&\rho_3(\eps):=\epsilon^{-\max\{2\sigma_1+\frac{13}{2},2\sigma_3+\frac{7}{2},2\sigma_2+4,2\sigma_4\}-4},\label{eq3.64b}\\
&r(h,k; \epsilon ,d,\sigma_i):=k^2\rho_1(\eps;d)+h^6\rho_3(\eps).\label{eq3.64f}
\end{align}
Then, under the following mesh and starting value conditions:
\begin{align}
h^{2-\frac{d}{2}}&\leq (C_1C_2)^{-1}\epsilon^{\max\{\sigma_1+\frac{11}{2},\sigma_3+4\}},\\
h^{1+r}|\ln\,h|^{\bar{r}}&\leq (C_1C_3)^{-1}\epsilon^{\gamma+3},\\
k&\leq\epsilon^3 \qquad \text{ when } f^m=(U^m)^3-U^m, \label{eq3.55}\\
h^{2\theta}&\leq C\frac{\epsilon(1-\epsilon^3)}{8-4\eps^3}, \label{eq3.56}\\
k&\leq C\eps^{\frac{4(6+d)}{4-d}+(4d-2)\sigma_1},\\
%
%
(U^0,1) &=(u_0,1),\label{eq3.58} \\
\|u_0-U^0\|_{H^{-1}} &\leq Ch^{3}\|u_0\|_{H^2}, \label{eq3.59}
\end{align}
there hold the error estimates
\begin{align} \label{eq3.60}
&\max_{0\leq m\leq M}\|u(t_m)-U^m\|_{-1,h}
+\Bigl(\sum_{m=1}^{M} k^2\|d_{t}(u(t_m)-U^m)\|_{-1,h}^2\Bigr)^{\frac{1}{2}} \\
&\hskip 1.5in
\leq Cr(h,k; \epsilon ,d,\sigma_i)^{\frac12},\nonumber\\ 
\label{eq3.60a}
&\Bigl(k\sum_{m=1}^{M} \|u(t_m)-U^m\|_{L^2}^2\Bigr)^{\frac{1}{2}}\\
&\hskip 1.5in
\leq C\Bigl( h^2\epsilon^{-\max\{\sigma_1+\frac{5}{2},\sigma_3+1\}}
+\epsilon^{-2}r(h,k; \epsilon ,d,\sigma_i)^{\frac12}\Bigr), \notag\\
\label{eq3.60b}
&\Bigl(k\sum_{m=1}^{M} \|\nabla\bigl(u(t_m)-U^m\bigl)\|_{L^2}^2\Bigr)^{\frac{1}{2}}\\
&\hskip 1.5in
\leq C\Bigl(h\epsilon^{-\max\{\sigma_1+\frac{5}{2},\sigma_3+1\}}
+\epsilon^{-2}r(h,k; \epsilon ,d,\sigma_i)^{\frac12}\Bigr).\notag
\end{align}
Moreover, if the starting value $U^0$ satisfies
\begin{equation}\label{eq3.61}
\|u_0-U^0\|_{L^2}\leq Ch^2\|u_0\|_{H^2},
\end{equation}
then there hold
\begin{align}\label{eq3.61a}
&\max_{0\leq m\leq M}\|u(t_m)-U^m\|_{L^{2}}+\Bigl(k\mathop{\sum}\limits_{m=1}^{M}
k\|d_t(u(t_m)-U^m)\|_{L^2}^2\Bigr)^{\frac{1}{2}} \\
&\quad
+\Bigl(\frac{k}{\eps}\sum_{m=1}^{M} \|w(t_m)-W^m\|_{L^2}^2\Bigr)^{\frac{1}{2}}\notag\\
&\hskip 0.8in
\leq C\Bigl(h^2\rho_3(\eps)^{\frac12}+\epsilon^{-\frac{7}{2}} r(h,k;\epsilon,d,\sigma_i)^{\frac{1}{2}} \Bigr)\nonumber\\
&\label{eq3.62} 
\max_{0\leq m\leq M}\|u(t_m)-U^m\|_{L^{\infty}}\\
&\hskip 0.8in
\leq C\Bigl(h^{2}|\ln h|\epsilon^{-\gamma} 
+h^{-\frac{d}{2}}\epsilon^{-\frac{7}{2}} r(h,k;\epsilon,d,\sigma_i)^{\frac{1}{2}} \Bigr).\nonumber
\end{align}
Furthermore, suppose that the starting value $W^0$ satisfies
\begin{equation}\label{eq3.63}
\|P_hw_0-W^0\|_{L^2}\leq Ch^{\beta}
\end{equation}
for some $\beta>1$, and there exists a constant $\gamma^{\prime}$ such that
\begin{align}\label{20150208_1}
\esssupI\|w\|_{W^{2,\infty}} \leq C\epsilon^{-\gamma^{\prime}}, 
\end{align}
then we have
\begin{align} \label{eq3.63a}
&\max_{0\leq m\leq M}\|w(t_m)-W^m\|_{L^{2}}
\leq C\Bigl( h^2\rho_3(\eps)+h^{\beta}\\
&\hskip 1.8in
+k^{-\frac{1}{2}}\epsilon^{-3} r(h,k;\epsilon,d,\sigma_i)^{\frac{1}{2}}\Bigr),\nonumber\\
\label{eq3.63b}
&\max_{0\leq m\leq M}\|w(t_m)-W^m\|_{L^{\infty}}
\leq C\Bigl( h^{-\frac{d}{2}} \Bigl( k^{-\frac{1}{2}}\epsilon^{-3} r(h,k;\epsilon,d,\sigma_i)^{\frac{1}{2}}
+h^{\beta} \Bigr) \\
&\hskip 1.8in
 +h^2|\ln h|\eps^{-\gamma^{\prime}}\Bigr). \nonumber
\end{align}
\end{theorem}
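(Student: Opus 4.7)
\medskip
\noindent\textbf{Proof proposal.} The plan is to follow the error-splitting/spectrum-estimate paradigm of \cite{Feng_Prohl04,Feng_Prohl05}, with the DG spectrum estimate from Proposition \ref{prop2.3} replacing its finite element counterpart. First I would split the errors using the DG elliptic projection $P_h$: write $e_u^m := u(t_m)-U^m = \eta_u^m+\xi^m$ and $e_w^m := w(t_m)-W^m = \eta_w^m+\zeta^m$, where $\eta_u^m := u(t_m)-P_h u(t_m)$, $\eta_w^m := w(t_m)-P_h w(t_m)$, $\xi^m := P_h u(t_m)-U^m \in V_h$, $\zeta^m := P_h w(t_m)-W^m \in V_h$. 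The projection pieces are controlled pointwise in time by \eqref{eq3.12}--\eqref{eq3.13} combined with the $\epsilon$-explicit regularity bounds \eqref{eq2.11}--\eqref{eq2.20} of Proposition \ref{prop2.1}, so the heart of the argument is to bound $\xi^m$ and $\zeta^m$. Subtracting \eqref{eq3.8}--\eqref{eq3.9} from the continuous equations evaluated at $t_m$, testing with arbitrary $\eta,v\in V_h$, and using \eqref{eq3.11}, I obtain the DG error equations
\begin{align*}
(d_t\xi^m,\eta) + a_h(\zeta^m,\eta) &= (R_u^m,\eta), \\
\epsilon a_h(\xi^m,v) + \tfrac{1}{\epsilon}\bigl(f(u(t_m))-f(U^m),v\bigr) - (\zeta^m,v) &= \tfrac{1}{\epsilon}(R_f^m,v) + (\eta_w^m,v),
\end{align*}
where $R_u^m := d_t u(t_m) - u_t(t_m) - d_t\eta_u^m$ is the consistency/projection residual in the first equation and $R_f^m$ absorbs the difference between $f(u(t_m))$ and the corresponding term of the scheme (this residual term is $0$ in the implicit case and involves $k\, u_t$-type expressions in the energy-splitting case, bounded in $L^2(0,T;L^2)$ by $k$ times quantities from \eqref{eq2.15}).

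\medskip
The central step is the $\|\cdot\|_{-1,h}$ estimate \eqref{eq3.60}. I would test the first error equation with $\eta = -\Delta_h^{-1}\xi^m \in \mathring V_h$ (after subtracting means so that $\xi^m$ is mean-free thanks to \eqref{eq3.58} and mass conservation of the scheme, which follows from taking $\eta\equiv 1$ in \eqref{eq3.8}), and the second equation with $v = \xi^m$. Adding the two identities, the $\zeta^m$ terms telescope and I obtain schematically
\begin{equation*}
\tfrac12 d_t\|\xi^m\|_{-1,h}^2 + \epsilon a_h(\xi^m,\xi^m) + \tfrac{1}{\epsilon}\bigl(f(u(t_m))-f(U^m),\xi^m\bigr) = \text{(residuals)}.
\end{equation*}
Expanding $f(u(t_m))-f(U^m) = f'(P_h u(t_m))(\eta_u^m+\xi^m) - \tfrac{1}{2}f''(P_h u(t_m))(\eta_u^m+\xi^m)^2 + \tfrac{1}{6}f'''(\cdot)(\eta_u^m+\xi^m)^3$ and isolating the dominant quadratic piece $\frac{1}{\epsilon}(f'(P_h u(t_m))\xi^m,\xi^m)$, the DG discrete spectrum estimate \eqref{eq2.24} converts $\epsilon a_h(\xi^m,\xi^m) + \frac{1}{\epsilon}(f'(P_h u(t_m))\xi^m,\xi^m)$ into a controllable $-c_0\|\xi^m\|_{-1,h}^2 + \tfrac{\epsilon^4}{2-\epsilon^3}a_h(\xi^m,\xi^m)$; the $\epsilon^3$-deficit appearing in \eqref{eq2.24} is what allows me to retain a small positive $|||\xi^m|||_a^2$ on the left. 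The remaining contributions from the expansion produce a \emph{Bernoulli-type cubic} $C\epsilon^{-1}\|\xi^m\|_{L^4}^3$, which by the interpolation/inverse inequality $\|\xi^m\|_{L^4}\le C\|\xi^m\|_{-1,h}^{\alpha}|||\xi^m|||_a^{1-\alpha}$ becomes a $\|\xi^m\|_{-1,h}^p$ term with $p>1$ after using Young's inequality to absorb the $|||\xi^m|||_a$ factor. Summing in time, the consistency residuals give the $k^2\rho_1$ and $h^6\rho_3$ contributions, and Lemma \ref{lem2.3} (with $S_\ell = \|\xi^\ell\|_{-1,h}^2 + k\sum_{m=1}^\ell|||\xi^m|||_a^2$, $b_\ell\sim k$, $k_\ell$ the cubic coefficient, and $p$ determined by the interpolation exponent) closes the estimate provided the smallness condition \eqref{eq3.55}--$k\le C\epsilon^{4(6+d)/(4-d)+(4d-2)\sigma_1}$ holds, so that the denominator in \eqref{eq2.151} stays bounded away from zero. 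This Bernoulli-Gronwall closure is where I expect the main bookkeeping difficulty, as every factor of $\epsilon^{-1}$ produced by the interpolation step must be tracked against the $\epsilon$ gained from the spectrum inequality.

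\medskip
Once \eqref{eq3.60} is proven, the remaining bounds follow by standard bootstrap. The $L^2$ bound \eqref{eq3.60a} and $H^1$-type bound \eqref{eq3.60b} come from testing the second error equation with $v=\xi^m$, using the now-known control of the $\|\xi^m\|_{-1,h}$ term via \eqref{eq3.5} and \eqref{eq3.7}, and invoking the inverse estimate $\|\xi^m\|_{L^2}\le Ch^{-1}\|\xi^m\|_{-1,h}$. For the max-in-time $L^2$ estimate \eqref{eq3.61a} I would repeat the test-function argument but pair the first equation with $v=d_t\xi^m$ (and use the stronger starting value \eqref{eq3.61}), which produces the $k\sum\|d_t\xi^m\|_{L^2}^2$ and $\frac{k}{\epsilon}\sum\|\zeta^m\|_{L^2}^2$ contributions on the left after again neutralizing the cubic nonlinearity via Taylor expansion plus Lemma \ref{lem2.3}. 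The $L^\infty$ estimate \eqref{eq3.62} is then immediate from the $L^2$ estimate and the inverse inequality $\|\xi^m\|_{L^\infty}\le Ch^{-d/2}\|\xi^m\|_{L^2}$, combined with \eqref{eq3.13} for the projection piece. Finally, to get \eqref{eq3.63a}--\eqref{eq3.63b}, I would use \eqref{eq3.9} together with the already-established bounds for $\xi^m$: rearranging gives $\zeta^m$ in terms of $\epsilon a_h(\xi^m,\cdot)$ and the nonlinear discrepancy $\tfrac{1}{\epsilon}(f(u(t_m))-f(U^m),\cdot)$, and estimating the resulting dual norm via \eqref{eq3.7} together with the starting-value hypothesis \eqref{eq3.63} and regularity \eqref{20150208_1} yields the pointwise-in-time $L^2$ and (via inverse inequality plus \eqref{eq3.13}) $L^\infty$ bounds on $w-W^m$.
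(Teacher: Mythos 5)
Your proposal follows essentially the same route as the paper: the same splitting via the DG elliptic projection $P_h$, the same test pair $(-\Delta_h^{-1}\Phi^m,\Phi^m)$ combined with the discrete spectrum estimate \eqref{eq2.24}, the same Bernoulli-type closure via Lemma \ref{lem2.3}, and the same bootstrap for the $L^2$, $L^\infty$ and chemical-potential bounds. The only cosmetic differences are in how the cubic remainder is interpolated (the paper keeps a positive $\|\Phi^m\|_{L^4}^4$ term and treats the $L^3$ piece by Gagliardo--Nirenberg) and in the final $W^m$ estimate, where the paper sums $(d_t\Psi^m,\Psi^m)$ against the already-established $\tfrac{k}{\eps}\sum\|\Psi^m\|_{L^2}^2$ bound to produce the $k^{-1/2}$ factor.
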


\begin{proof}
In the following, we only give a proof for the convex splitting scheme corresponding to
$f^{m} = (u^m)^3 - u^{m-1}$ in \eqref{eq3.4} because the proof for the fully implicit scheme
with $f^m = (u^m)^3 - u^m$ is almost same. Since the proof is long, we divide it into four steps.

\smallskip
{\em Step 1:} 
It is obvious that equations \eqref{eq1.1}--\eqref{eq1.3} imply that
\begin{alignat}{2}\label{eq3.71}
\bigl(u_t(t_m),\eta_h\bigr)+ a_h(w(t_m),\eta_h) &=0 &&\qquad \forall \eta_h\in V_h, \\
\label{eq3.72}
\epsilon a_h(u(t_m),v_h)+\frac{1}{\epsilon}\bigl(f(u(t_m)),v_h\bigr) &=\bigl(w(t_m),v_h\bigr) &&\qquad \forall v_h\in V_h.
\end{alignat}
Define error functions $E^m :=u(t_m)-U^m$ and $G^m :=w(t_m)-W^m$. Subtracting \eqref{eq3.8} from 
\eqref{eq3.71} and \eqref{eq3.9} from \eqref{eq3.72} yield the following error equations:
\begin{alignat}{2}\label{eq3.73}
\bigl(d_t E^m,\eta_h\bigr)+ a_h(G^m,\eta_h) &=\bigl(R(u_{tt},m),\eta_h\bigr) &&\qquad \forall \eta_h\in V_h, \\
\label{eq3.74}
\epsilon a_h(E^m,v_h)+\frac{1}{\epsilon}\bigl(f(u(t_m))-f(U^m),v_h \bigr)&=\bigl(G^m,v_h\bigr) &&\qquad \forall v_h\in V_h,
\end{alignat}
where
\begin{equation*} 
R(u_{tt};m):=\frac{1}{k}\int^{t_m}_{t_{m-1}}(s-t_{m-1})u_{tt}(s)\,ds.
\end{equation*}
It follows from \eqref{eq2.19} that
\begin{align*}
k\sum_{m=1}^M\|R(u_{tt};m)\|_{H^{-1}}^2 
&\leq \frac{1}{k}\sum_{m=1}^M \Bigl(\int^{t_m}_{t_{m-1}}(s-t_{m-1})^2\,ds\Bigr)
\Bigl(\int^{t_m}_{t_{m-1}}\|u_{tt}(s)\|_{H^{-1}}^2\,ds\Bigr)\\
&\leq Ck^2\rho_1(\eps,d).
\end{align*}

Introduce the error decompositions
\begin{align}\label{eq3.75}
E^m=\Theta^m+\Phi^m, \qquad G^m=\Lambda^m+\Psi^m,  
\end{align}
where 
\begin{alignat*}{2}
&\Theta^m:=u(t_m)-P_hu(t_m), &&\qquad \Phi^m:=P_hu(t_m)-U^m,  \\
&\Lambda^m:=w(t_m)-P_hw(t_m), &&\qquad \Psi^m:=P_hw(t_m)-W^m.  
\end{alignat*}
Using the definition of the operator $P_h$ in \eqref{eq3.11}, \eqref{eq3.73}--\eqref{eq3.74} 
can be rewritten as
\begin{align}\label{eq3.78}
&\bigl(d_t\Phi^m,\eta_h\bigr)+ a_h(\Psi^m,\eta_h) =-\bigl(d_t\Theta^m,\eta_h\bigr)
+\bigl(R(u_{tt},m),\eta_h \bigr) \qquad\forall\eta_h\in V_h,  \\
&\epsilon a_h(\Phi^m,v_h)+\frac{1}{\epsilon} \bigl(f(u(t_m))-f^m,v_h\bigr)
=\bigl(\Psi^m,v_h\bigr)+\bigl(\Lambda^m,v_h\bigr) \qquad\forall v_h\in V_h. \label{eq3.79}
\end{align}

Setting $\eta_h=-\Delta_h^{-1}\Phi^m$ in \eqref{eq3.78} and $v_h=\Phi^m$ in \eqref{eq3.79},
adding the resulting equations and summing over $m$ from $1$ to $\ell$, we get
\begin{align}
&a_h(\Delta_h^{-1}\Phi^\ell,\Delta_h^{-1}\Phi^\ell)
+\sum_{m=1}^\ell a_h(\Delta_h^{-1}\Phi^m-\Delta_h^{-1}\Phi^{m-1},\Delta_h^{-1}\Phi^m-\Delta_h^{-1}\Phi^{m-1})\label{eq3.80}\\
&\hskip 0.8in
+2k\sum_{m=1}^\ell \epsilon a_h(\Phi^m,\Phi^m)+2k\sum_{m=1}^\ell\frac{1}{\epsilon} \bigl(f(u(t_m))-f^m,\Phi^m\bigr)\nonumber\\
&\quad 
=2k\sum_{m=1}^\ell \Big( \bigl(R(u_{tt},m),-\Delta_h^{-1}\Phi^m\bigr)-\bigl(d_t\Theta^m,-\Delta_h^{-1}\Phi^m\bigr)
+\bigl(\Lambda^m,\Phi^m\bigr)\Big) \nonumber \\
&\hskip 0.8in
+a_h(\Delta_h^{-1}\Phi^0,\Delta_h^{-1}\Phi^0).\nonumber
\end{align}

\smallskip
{\em Step 2:} 
For $\sigma_e^0>\sigma_*^e$ for all $e\in \mathcal{E}_h$,  the first long term on the right-hand 
side of \eqref{eq3.80} can be bounded as follows
\begin{align} \label{eq3.81}
&2k\sum_{m=1}^\ell\Bigl( \bigl(R(u_{tt},m),-\Delta_h^{-1}\Phi^m\bigr)
+\bigl(d_t\Theta^m,-\Delta_h^{-1}\Phi^m\bigr)+\bigl(\Lambda^m,\Phi^m\bigr)\Bigr) \\
&\qquad
\leq Ck\sum_{m=1}^\ell \Bigl(\|R(u_{tt};m)\|_{H^{-1}}^2+\|d_t\Theta^m\|_{H^{-1}}^2
+(1-\eps^3)\epsilon^{-4}\|\Lambda^m\|_{H^{-1}}^2\Bigr)\nonumber\\
&\qquad\qquad
+k\sum_{m=1}^\ell\Bigl( a_h(\Delta_h^{-1}\Phi^m,\Delta_h^{-1}\Phi^m)
+\frac{\eps^4}{1-\eps^3} a_h(\Phi^m,\Phi^m)\Bigr)\nonumber\\
&\qquad
\leq k\sum_{m=1}^\ell\Bigl( a_h(\Delta_h^{-1}\Phi^m,\Delta_h^{-1}\Phi^m)
+\frac{\eps^4}{1-\eps^3} a_h(\Phi^m,\Phi^m)\Bigr)\nonumber\\
&\qquad\qquad
+C\Bigl(k^2\rho_1(\epsilon,d)+h^{6}\rho_3(\epsilon)\Bigr),\nonumber
\end{align}
where we have used \eqref{eq2.15} and the following facts \cite{Dupont}:
\begin{equation*}
\|u-P_hu\|_{H^{-1}}\leq Ch^3\|u\|_{H^2},\qquad 
\|w-P_hw\|_{H^{-1}}\leq Ch^3\|w\|_{H^2}.
\end{equation*}

We now bound the last term on the left-hand side of \eqref{eq3.80}. By the definition of $f^m$,
we have
\begin{align*}
&f(u(t_m))-f^m = f(u(t_m)) -f\bigl(P_h u(t_m)\bigr) + f\bigl(P_h u(t_m)\bigr)-f^m\\ 
&\qquad\qquad
\geq -\bigl|f(u(t_m)) -f\bigl(P_h u(t_m)\bigr) \bigr| 
+ \bigl(P_h u(t_m)\bigr)^3 -P_h u(t_m) - (U^m)^3 + U^{m-1} \nonumber \\
&\qquad\qquad
\geq -C|\Theta^m| + \Bigl( \bigl(P_h u(t_m)\bigr)^2 + P_h u(t_m)\,
U^m + (U^m)^2 \Bigr)\Phi^m -\Phi^m -kd_t U^m \nonumber \\
&\qquad\qquad
\geq -C|\Theta^m|+f'\bigl(P_h u(t_m)\bigr)\,\Phi^m-3P_h u(t_m)\,(\Phi^m)^2 
+(\Phi^m)^3-kd_t U^m. \nonumber
\end{align*}
By the discrete energy law \eqref{eq3.15}, \eqref{eq3.4} and \eqref{eq3.544}, we obtain for any $1\leq \ell\leq M$
\begin{align}\label{eq3.76}
&2k\sum_{m=1}^\ell \frac{1}{\eps}\bigl(f(u(t_m))-f^m,\Phi^m\bigr) \\
&\geq 
-\frac{Ck}{\eps}\sum_{m=1}^\ell \|\Theta^m\|_{H^{-1}(\cT_h)} \|\Phi^m\|_{H^1(\cT_h)}
+2k\sum_{m=1}^\ell \frac{1}{\eps}\Bigl( f'\bigl(P_h u(t_m)\bigr), (\Phi^m)^2\Bigr) \nonumber \\
&\quad 
-\frac{Ck}{\eps} \sum_{m=1}^\ell \|\Phi^m\|_{L^3}^3 
+\frac{2k}{\eps}\sum_{m=1}^\ell \|\Phi^m\|_{L^4}^4 
-\frac{2k}{\eps} \sum_{m=1}^\ell k\|d_t U^m\|_{-1,h}\,\|\Phi^m\|_{\alpha} \nonumber\\
&\geq 2k\sum_{m=1}^\ell \frac{1}{\eps}\Bigl( f'\bigl(P_h u(t_m)\bigr), (\Phi^m)^2 \Bigr) 
+\frac{2k}{\eps}\sum_{m=1}^\ell \|\Phi^m\|_{L^4}^4
-\frac{Ck}{\eps} \sum_{m=1}^\ell \|\Phi^m\|_{L^3}^3 \nonumber\\
&\quad 
-k\eps^4\sum_{m=1}^\ell \|\Phi^m\|_a^2 
-C\Bigl( h^{6} \eps^{-6} \|u\|_{L^2((0,T);H^s(\Ome))}^2
+k^2 \eps^{-6} E_h(u_h^0) \Bigr)   \nonumber \\
&\geq 2k\sum_{m=1}^\ell \frac{1}{\eps}\Bigl( f'\bigl(P_h u(t_m)\bigr), (\Phi^m)^2 \Bigr) 
+\frac{2k}{\eps}\sum_{m=1}^\ell \|\Phi^m\|_{L^4}^4
-\frac{Ck}{\eps} \sum_{m=1}^\ell \|\Phi^m\|_{L^3}^3\notag\\
&\quad
-k\frac{\eps^4}{1-\eps^3}\sum_{m=1}^\ell a_h(\Phi^m,\Phi^m)
-C\Bigl( h^{6} \eps^{-6} \|u\|_{L^2((0,T);H^s(\Ome))}^2 
+k^2 \eps^{-6} E_h(U^0) \Bigr).\nonumber
\end{align}

Substituting \eqref{eq3.81} and \eqref{eq3.76} into \eqref{eq3.80} we get
\begin{align}
&a_h(\Delta_h^{-1}\Phi^\ell,\Delta_h^{-1}\Phi^\ell)
+\sum_{m=1}^\ell a_h(\Delta_h^{-1}\Phi^m-\Delta_h^{-1}\Phi^{m-1},\Delta_h^{-1}\Phi^m-\Delta_h^{-1}\Phi^{m-1})
\label{eq3.84}\\
&\hskip 0.6in 
+\frac{2k(1-5\eps^3)}{1-\eps^3}\sum_{m=1}^\ell
\Bigl(\eps a_h(\Phi^m,\Phi^m)+\frac{1-\eps^3}{\eps} \bigl(f'(P_hu(t_m))\Phi^m,\Phi^m\bigr)\Bigr)\notag \\
& \hskip 0.6in
+\frac{6\eps^4}{1-\eps^3}k \sum_{m=1}^\ell a_h(\Phi^m,\Phi^m) 
+\frac{2k}{\eps}\sum_{m=1}^\ell \|\Phi^m\|_{L^4}^4 \nonumber \\
&\hskip 0.3in
\leq Ck\sum_{m=1}^\ell a_h(\Delta_h^{-1}\Phi^m,\Delta_h^{-1}\Phi^m)
+\frac{Ck}{\eps}\sum_{m=1}^\ell\|\Phi^m\|_{L^{3}}^{3} \notag\\
& \hskip 0.6in
-10k\eps^2\sum_{m=1}^\ell \bigl(f'(P_hu(t_m))\Phi^m,\Phi^m \bigr)
+C\bigr(k^2\rho_1(\eps;d)+h^6\rho_3(\eps)\bigr)  \nonumber \\
&\hskip 0.6in
+C\Bigl( h^{6} \eps^{-6} \|u\|_{L^2((0,T);H^s(\Ome)}^2 +k^2 \eps^{-6} E_h(U^0) \Bigr).\nonumber
\end{align}

\smallskip
{\em Step 3:} 
To control the second term on the right-hand side of \eqref{eq3.84}, we appeal to the following 
Gagliardo-Nirenberg inequality \cite{Adams03}:
\[
\|v\|_{L^3(K)}^3\leq C\Bigl( \|\nab v\|_{L^2(K)}^{\frac{d}2} 
\bigl\|v\bigr\|_{L^2(K)}^{\frac{6-d}2} +\|v\|_{L^2(K)}^3 \Bigr)
\qquad\forall K\in \cT_h.
\]
Thus we get
\begin{align}\label{eq3.87b}
\frac{Ck}{\eps}\sum_{m=1}^\ell \|\Phi^m\|_{L^3}^3   
\leq& \eps^4 k\sum_{m=1}^\ell \|\nab \Phi^m\|_{L^2(\cT_h)}^2
+\frac{Ck}{\eps}\sum_{m=1}^\ell \|\Phi^m\|_{L^2}^3  \\
&\hskip 0.3in
+C\eps^{-\frac{4(1+d)}{4-d}} k\sum_{m=1}^\ell
\bigl\|\Phi^m\bigr\|_{L^2}^{\frac{2(6-d)}{4-d}} \nonumber \\
\leq &\frac{\eps^4}{1-\eps^3}k\sum_{m=1}^\ell a_h(\Phi^m,\Phi^m)+
\frac{Ck}{\eps}\sum_{m=1}^\ell \|\Phi^m\|_{L^2}^3 \nonumber \\
&\hskip 0.3in
+C\eps^{-\frac{4(1+d)}{4-d}} k\sum_{m=1}^\ell \bigl\|\Phi^m\bigr\|_{L^2}^{\frac{2(6-d)}{4-d}}. 
\nonumber
\end{align}

The third item on the right-hand side of \eqref{eq3.84} can be bounded by
\begin{align} \label{eq3.87}
&-10k\eps^2(f'(P_hu(t_m))\Phi^m,\Phi^m) \\
&\hskip 0.8in
\leq k\frac{\eps^4}{1-\eps^3} a_h(\Phi^m,\Phi^m)
+kC a_h(\Delta_h^{-1}\Phi^m,\Delta_h^{-1}\Phi^m). \nonumber
\end{align}
Again, here we have used \eqref{eq3.544}.

Finally, for the third term on the left-hand side of \eqref{eq3.84}, 
we utilize the discrete spectrum estimate \eqref{eq2.24} to bound it from below as follows:
\begin{align} \label{sec3:add3}
\epsilon a_h(\Phi^m,\Phi^m)+\frac{1-\eps^3}{\eps} \bigl(f'(P_hu(t_m))\Phi^m,\Phi^m\bigr) 
\geq -c_0 \|\nabla\Delta^{-1}\Phi^m\|_{L^2}^2.
\end{align}
By the stability of $\Delta^{-1}$ and \eqref{eq3.544}, we also have
\begin{align} \label{sec3:spec7}
c_0 \|\nabla\Delta^{-1}\Phi^m\|_{L^2}^2 &\leq C \|\Phi^m\|_{L^2}^2 
\leq \frac{\eps^4}{1-\eps^3}a_h(\Phi^m,\Phi^m)+Ca_h(\Delta_h^{-1}\Phi^m,\Delta_h^{-1}\Phi^m).  
\end{align}

\smallskip
{\em Step 4:} 
Substituting \eqref{eq3.87b}, \eqref{eq3.87}, \eqref{sec3:add3}, \eqref{sec3:spec7} 
into \eqref{eq3.84}, we get
\begin{align} \label{eq3.87c}
& a_h(\Delta_h^{-1}\Phi^\ell,\Delta_h^{-1}\Phi^\ell)
+\sum_{m=1}^\ell\limits 
a_h(\Delta_h^{-1}\Phi^m-\Delta_h^{-1}\Phi^{m-1},\Delta_h^{-1}\Phi^m-\Delta_h^{-1}\Phi^{m-1}) \\
&\hskip 1.2in
+\frac{2\eps^4k}{1-\eps^3}\sum_{m=1}^\ell a_h(\Phi^m,\Phi^m)
+\frac{2k}{\eps}\sum_{m=1}^\ell \|\Phi^m\|_{L^4}^4\nonumber\\
&\quad
\leq Ck\sum_{m=1}^\ell a_h(\Delta^{-1}_h\Phi^m,\Delta^{-1}_h\Phi^m) 
+\frac{Ck}{\eps}\sum_{m=1}^\ell \|\Phi^m\|_{L^2}^3 \nonumber \\
&\qquad	 
+C\eps^{-\frac{4(1+d)}{4-d}} k\sum_{m=1}^\ell \bigl\|\Phi^m\bigr\|_{L^2}^{\frac{2(6-d)}{4-d}}
+C\bigl( k^2\rho_1(\eps;d)+h^6\rho_3(\eps) \bigr) \nonumber \\
&\qquad
+C\Bigl( h^{6} \eps^{-6} \|u\|_{L^2((0,T);H^s(\Ome)}^2 +k^2 \eps^{-6} E_h(U^0) \Bigr).\nonumber
\end{align}

By discrete energy law \eqref{eq3.15}, General Assumption \eqref{eq2.3}, $H^1$ stability of elliptic projection, $L^{\infty}$ stability(or $L^{\infty}$ error estimate and triangle inequality) of elliptic projection, we can get for any $0\leq \ell\leq M$
\begin{align*}
\|U^\ell\|_{L^2} \leq k\sum\limits_{m=1}^\ell \|d_t U^m\|_{L^2} +\|U^0\|_{L^2} \leq C\eps^{-\sigma_1}.
\end{align*}
Since the projection of $u$ is bounded, then for any $0\leq \ell\leq M$
\begin{equation}\label{20150201_4}
\|\Phi^\ell\|_{L^2}\leq C\eps^{-\sigma_1}.
\end{equation}
We point out that the exponent for $\bigl\|\Phi^m\bigr\|_{L^2}$ is $\frac{2(6-d)}{4-d}$, which is bigger
than $3$ for $d=2,3$. By \eqref{20150201_4} we have
\begin{align*}
\bigl\|\Phi^m\bigr\|_{L^2}^{4}\leq C\eps^{-\sigma_1}\bigl\|\Phi^m\bigr\|_{L^2}^{3},\qquad
\bigl\|\Phi^m\bigr\|_{L^2}^{6}\leq C\eps^{-3\sigma_1}\bigl\|\Phi^m\bigr\|_{L^2}^{3}.
\end{align*}

Using the Schwarz and Young's inequalities, we have
\begin{align}\label{eq3.87h}
\bigl\|\Phi^m\bigr\|_{L^2}^{3}
&=\Bigl(\bigl\|\Phi^m\bigr\|_{L^2}^{2}\Bigr)^{\frac{3}{2}}
=a_h(-\Delta_h^{-1}\Phi^m,\Phi^m)^{\frac{3}{2}} \\
&\leq a_h(\Delta_h^{-1}\Phi^m,\Delta_h^{-1}\Phi^m)^{\frac{3}{4}}\,
a_h(\Phi^m,\Phi^m)^{\frac{3}{4}} \notag \\
&\leq \eps^{\frac{4(1+d)}{4-d}+\sigma_1+2(d-2)\sigma_1} \frac{\eps^4}{1-\eps^3} a_h(\Phi^m,\Phi^m)\notag\\
&\qquad+C\eps^{-4}\eps^{-\frac{4(1+d)}{4-d}-\sigma_1-2(d-2)\sigma_1} a_h(\Delta_h^{-1}\Phi^m,\Delta_h^{-1}\Phi^m)^3. \notag
\end{align}
Therefore, \eqref{eq3.87c} becomes 
\begin{align}\label{eq3.87i}
& a_h(\Delta_h^{-1}\Phi^\ell,\Delta_h^{-1}\Phi^\ell)
+\sum_{m=1}^\ell a_h(\Delta_h^{-1}\Phi^m-\Delta_h^{-1}\Phi^{m-1},\Delta_h^{-1}\Phi^m-\Delta_h^{-1}\Phi^{m-1}) \\
&\hskip 1.2in
+\frac{\eps^4k}{1-\eps^3}\sum_{m=1}^\ell a_h(\Phi^m,\Phi^m)+\frac{2k}{\eps}
\sum_{m=1}^\ell \|\Phi^m\|_{L^4}^4\nonumber\\
&\quad
\leq Ck\sum_{m=1}^\ell a_h(\Delta_h^{-1}\Phi^m,\Delta_h^{-1}\Phi^m)\nonumber \\
&\qquad +Ck\eps^{-\frac{4(6+d)}{4-d}-2\sigma_1-4(d-2)\sigma_1}\sum_{m=1}^\ell a_h(\Delta_h^{-1}\Phi^m,\Delta_h^{-1}\Phi^m)^{3}\nonumber\\
&\qquad
+C\bigl( k^2\rho_1(\eps;d)+h^6\rho_3(\eps) \bigr)
+C\Bigl( h^{6} \eps^{-6} \|u\|_{L^2((0,T);H^s(\Ome)}^2 +k^2 \eps^{-6} E_h(U^0) \Bigr)\nonumber\\
&\quad
\leq Ck\sum_{m=1}^\ell a_h(\Delta_h^{-1}\Phi^m,\Delta_h^{-1}\Phi^m)\nonumber \\
&\qquad +Ck\eps^{-\frac{4(6+d)}{4-d}-2\sigma_1-4(d-2)\sigma_1}\sum_{m=1}^\ell a_h(\Delta_h^{-1}\Phi^m,\Delta_h^{-1}\Phi^m)^{3}\nonumber\\
&\qquad
+C\bigl( k^2\rho_1(\eps;d)+h^6\rho_3(\eps) \bigr).\nonumber
\end{align}

On noting that $U^m$ can be written as
\begin{align}\label{20150207_6}
U^{\ell} = k\sum_{m=1}^\ell d_t U^m+U^0,
\end{align}
then by \eqref{eq2.3} and \eqref{eq3.15}, we get
\begin{align}\label{20150207_7}
\|U^{\ell}\|_{-1,h} \leq k\sum_{m=1}^\ell\|d_t U^m\|_{-1,h}+\|U^0\|_{-1,h}\leq C\eps^{-\sigma_1}.
\end{align}
Using the boundedness of the projection, we have
\begin{align}\label{20150207_8}
\|\Phi^{\ell}\|_{-1,h}^2\leq C\eps^{-2\sigma_1}.
\end{align}

Also, \eqref{eq3.87i} can be written in the following equivalent form
\begin{align}\label{20150207_9}
& a_h(\Delta_h^{-1}\Phi^\ell,\Delta_h^{-1}\Phi^\ell)
+\sum_{m=1}^\ell a_h(\Delta_h^{-1}\Phi^m-\Delta_h^{-1}\Phi^{m-1},\Delta_h^{-1}\Phi^m-\Delta_h^{-1}\Phi^{m-1}) \\
&\hskip 0.9in
+\frac{\eps^4k}{1-\eps^3}\sum_{m=1}^\ell a_h(\Phi^m,\Phi^m)+\frac{2k}{\eps}
\sum_{m=1}^\ell \|\Phi^m\|_{L^4}^4\leq M_1+M_2,\nonumber
\end{align}
where
\begin{align}\label{20150207_10}
M_1 &:= Ck\sum_{m=1}^{\ell-1} a_h(\Delta_h^{-1}\Phi^m,\Delta_h^{-1}\Phi^m)\\
&\qquad
+Ck\eps^{-\frac{4(6+d)}{4-d}-2\sigma_1-4(d-2)\sigma_1}\sum_{m=1}^{\ell-1} a_h(\Delta_h^{-1}\Phi^m,\Delta_h^{-1}\Phi^m)^{3}\nonumber\\
&\qquad +C\bigl( k^2\rho_1(\eps;d)+h^6\rho_3(\eps) \bigr),\nonumber\\
M_2 &:=Ck a_h(\Delta_h^{-1}\Phi^\ell,\Delta_h^{-1}\Phi^\ell)
\label{20150207_11}\\
&\qquad+Ck\eps^{-\frac{4(6+d)}{4-d}-2\sigma_1-4(d-2)\sigma_1}a_h(\Delta_h^{-1}\Phi^{\ell},\Delta_h^{-1}\Phi^{\ell})^{3}.\nonumber
\end{align}
It is easy to check that
\begin{align}\label{20150207_12}
M_2\leq\frac12\|\Phi^{\ell}\|_{-1,h}^2\qquad\text{provided that}\quad k\leq C\eps^{\frac{4(6+d)}{4-d}+(4d-2)\sigma_1}.
\end{align}
Under this restriction, we have
\begin{align}\label{20150207_13}
& a_h(\Delta_h^{-1}\Phi^\ell,\Delta_h^{-1}\Phi^\ell)
+2\sum_{m=1}^\ell a_h(\Delta_h^{-1}\Phi^m-\Delta_h^{-1}\Phi^{m-1},\Delta_h^{-1}\Phi^m-\Delta_h^{-1}\Phi^{m-1}) \\
&\hskip 1.2in
+\frac{2\eps^4k}{1-\eps^3}\sum_{m=1}^\ell a_h(\Phi^m,\Phi^m)+\frac{4k}{\eps}
\sum_{m=1}^\ell \|\Phi^m\|_{L^4}^4\nonumber\\
&\quad
\leq 2Ck\sum_{m=1}^{\ell-1} a_h(\Delta_h^{-1}\Phi^m,\Delta_h^{-1}\Phi^m)+2C\bigl( k^2\rho_1(\eps;d)+h^6\rho_3(\eps) \bigr)\nonumber \\
&\qquad
+2Ck\eps^{-\frac{4(6+d)}{4-d}-2\sigma_1-4(d-2)\sigma_1}\sum_{m=1}^{\ell-1} a_h(\Delta_h^{-1}\Phi^m,\Delta_h^{-1}\Phi^m)^{3}\nonumber\\
&\quad
\leq Ck\sum_{m=1}^{\ell-1} a_h(\Delta_h^{-1}\Phi^m,\Delta_h^{-1}\Phi^m)+C\bigl( k^2\rho_1(\eps;d)+h^6\rho_3(\eps) \bigr)\nonumber \\
&\qquad
+Ck\eps^{-\frac{4(6+d)}{4-d}-2\sigma_1-4(d-2)\sigma_1}\sum_{m=1}^{\ell-1} a_h(\Delta_h^{-1}\Phi^m,\Delta_h^{-1}\Phi^m)^{3}.
\nonumber
\end{align}

Define the slack variable $d_\ell\geq 0$ such that 
\begin{align}\label{eq3.87j}
&a_h(\Delta_h^{-1}\Phi^\ell,\Delta_h^{-1}\Phi^\ell)
+2\sum_{m=1}^\ell a_h(\Delta_h^{-1}\Phi^m-\Delta_h^{-1}\Phi^{m-1},\Delta_h^{-1}\Phi^m-\Delta_h^{-1}\Phi^{m-1}) \\
&\hskip 1.2in
+\frac{2\eps^4k}{1-\eps^3}\sum_{m=1}^\ell a_h(\Phi^m,\Phi^m)+\frac{4k}{\eps}
\sum_{m=1}^\ell \|\Phi^m\|_{L^4)}^4 +d_\ell\nonumber\\
&\quad
=Ck\sum_{m=1}^{\ell-1} a_h(\Delta_h^{-1}\Phi^m,\Delta_h^{-1}\Phi^m)+C\bigl(k^2\rho_1(\eps;d)+h^6\rho_3(\eps)\bigr) \nonumber \\
&\qquad
+Ck\eps^{-\frac{4(6+d)}{4-d}-2\sigma_1-4(d-2)\sigma_1}\sum_{m=1}^{\ell-1} a_h(\Delta_h^{-1}\Phi^m,\Delta_h^{-1}\Phi^m)^{3}.\nonumber
\end{align}

We also define $\{S_{\ell}\}_{\ell\geq 1}$ by
\begin{align}
&S_{\ell} = d_\ell
+2\sum_{m=1}^\ell a_h(\Delta_h^{-1}\Phi^m-\Delta_h^{-1}\Phi^{m-1},\Delta_h^{-1}\Phi^m-\Delta_h^{-1}\Phi^{m-1})\label{eq3.87l}\\
&\hskip 0.5in
+a_h(\Delta_h^{-1}\Phi^\ell,\Delta_h^{-1}\Phi^\ell)
+\frac{2\eps^4k}{1-\eps^3}\sum_{m=1}^\ell a_h(\Phi^m,\Phi^m)
+\frac{4k}{\eps}\sum_{m=1}^\ell \|\Phi^m\|_{L^4}^4, \nonumber
\end{align}
and equation \eqref{eq3.87j} shows that 
\begin{align*} 
&S_1 = C\bigl(k^2\rho_1(\eps;d)+h^6\rho_3(\eps)\bigr).
\end{align*}
Then
\begin{equation}\label{eq3.87m}
S_{\ell+1}-S_{\ell}\leq CkS_{\ell}+Ck\eps^{-\frac{4(6+d)}{4-d}-2\sigma_1-4(d-2)\sigma_1}S_{\ell}^3 \qquad\forall \ell\geq 1.
\end{equation}

Applying Lemma \ref{lem2.3} to $\{S_{\ell}\}_{\ell\geq1}$ defined above, we obtain $\forall\ell\geq1$,
\begin{equation}\label{20150207_14}
S_{\ell}\leq a^{-1}_{\ell}\Bigg\{S^{-2}_{1}-2C\eps^{-\frac{4(6+d)}{4-d}-2\sigma_1-4(d-2)\sigma_1}k\sum_{s=1}^{\ell-1} a^{-2}_{s+1}\Bigg\}^{-\frac{1}{2}}
\end{equation}
provided that
\begin{equation}\label{20150207_15}
S^{-2}_{1}-2C\eps^{-\frac{4(6+d)}{4-d}-2\sigma_1-4(d-2)\sigma_1}k\sum_{s=1}^{\ell-1} a^{-2}_{s+1}>0.
\end{equation}
We note that $a_s\, (1\leq s\leq \ell)$ are all bounded as $k\rightarrow0$, 
therefore, \eqref{20150207_15} holds under the mesh constraint stated in the theorem.  
It follows from \eqref{eq3.58} and \eqref{eq3.59} that 
\begin{equation}\label{20150207_16}
S_{\ell}\leq 2a_\ell^{-1} S_1
\leq C\bigl(k^2\rho_1(\eps;d)+h^6\rho_3(\eps)\bigr).
\end{equation}

Then \eqref{eq3.60} follows from the triangle inequality 
on $E^m=\Theta^m+\Phi^m$. \eqref{eq3.60b} is obtained by taking the test function $\eta_h=\Phi^m$ in \eqref{eq3.78} and $v_h=\Phi^m$ in \eqref{eq3.79}, and \eqref{eq3.60a} is 
a consequence of the Poincar\`e inequality.  

Now setting $\eta_h=\Phi^m$ in \eqref{eq3.78} and $v_h=-\frac{1}{\eps}\Psi^m$ in \eqref{eq3.79}, 
and adding the resulting equations yield
\begin{align} \label{eq3.91}
&\frac{1}{2} d_t\|\Phi^m\|_{L^2}^2+\frac{k}{2}\|d_t\Phi^m\|_{L^2}^2 +\frac{1}{\eps}\|\Psi^m\|_{L^2}^2 
=\frac{1}{\eps^2}\bigl(f(u(t_m))-f(U^m),\Psi^m\bigr) \\
&\hskip 1.6in
+\bigl(R(u_{tt};m),\Phi^m\bigr)-\bigl(d_t\Theta^m,\Phi^m\bigr)
-\frac{1}{\eps}\bigl(\Lambda^m,\Psi^m\bigr). \nonumber 
\end{align}
The last three terms on the right-hand side of \eqref{eq3.91} can be bounded in
the same way as in \eqref{eq3.81}, and the first term can be controlled as
\begin{align}\label{eq3.92}
\frac{1}{\eps^2}\bigl(f(u(t_m))-f(U^m),\Psi^m\bigr)&=\frac{1}{\eps^2}\bigl(f'(\xi)E^m,\Psi^m\bigr)\\
&\leq \frac{1}{2\eps}\|\Psi^m\|_{L^2}^2+\frac{C}{\eps^3}\|E^m\|_{L^2}^2. \notag
\end{align}

Multiplying both sides of \eqref{eq3.91} by $k$ and summing over $m$ from $1$ to $M$ yield
the desired estimate \eqref{eq3.61a}. Estimate \eqref{eq3.62} follows from an applications of
the following inverse inequality: 
\begin{equation}\label{eq3.92a}
\|\Phi^m\|_{L^{\infty}} \leq h^{-\frac{d}{2}}\|\Phi^m\|_{L^{2}},
\end{equation}
and the following $L^{\infty}$ estimate for the elliptic projection:
\begin{equation} \label{eq3.92b}
\|u-P_hu\|_{L^{\infty}}\leq Ch^{2}|\ln h|\|u\|_{W^{s,\infty}}\qquad \forall u\in H^2(\Omega).
\end{equation}

Finally, it is well known that there holds the following estimate for
the elliptic projection operator:
\begin{equation}\label{eq3.93}
\max_{0\leq m\leq M}\|\Lambda^m\|_{L^2}+\bigg(k\sum_{m=0}^M k\|d_t\Lambda^m\|_{L^2}^2\bigg)^{\frac{1}{2}}
\leq Ch^2\rho_2(\eps).
\end{equation}
Using the identity
\begin{equation}\label{eq3.94}
\bigl(d_t\Phi^m,\Phi^m\bigr)=\frac{1}{2}d_t\|\Phi^m\|_{L^2}^2+\frac{k}{2}\|d_t\Phi^m\|_{L^2}^2,
\end{equation}
we get
\begin{align} \label{eq3.95}
&\frac{1}{2}\|\Psi^M\|_{L^2}^2+k\sum_{m=1}^M \frac{k}{2}\|d_t\Psi^m\|_{L^2}^2
=k\sum_{m=1}^M \bigl(d_t\Psi^m,\Psi^m\bigr) +\frac{1}{2}\|\Psi^0\|_{L^2}^2 \\
&\qquad\qquad
\leq k\sum_{m=1}^M \bigg(\frac{k}{4}\|d_t\Psi^m\|_{L^2}^2
+\frac{1}{k}\|\Psi^m\|_{L^2}^2\bigg)+\frac{1}{2}\|\Psi^0\|_{L^2}^2.\nonumber
\end{align}
The first term on the right hand side of \eqref{eq3.95} can be absorbed by the second term on the left hand side of \eqref{eq3.95}. The second tern on the right hand side of \eqref{eq3.95} has been obtained in \eqref{eq3.61a}. Estimate \eqref{eq3.63a} for $W^m$ then follows from 
\eqref{eq3.93} and \eqref{eq3.95}. \eqref{eq3.63b} follows from an application of
the triangle inequality, the inverse inequality, and \eqref{eq3.92b}. This completes the proof.
\end{proof}

\section{Convergence of numerical interfaces}\label{sec-4}

In this section, we prove that the numerical interface defined 
as the zero level set of the finite element interpolation of the solution $U^m$ converges
to the moving interface of the Hele-Shaw problem under the assumption that the Hele-Shaw 
problem has a unique global (in time) classical solution. To the end, we first cite the following 
PDE convergence result proved in \cite{Alikakos94}.

\begin{theorem}\label{thm4.1}
Let $\Omega$ be a given smooth domain and $\Gamma_{00}$ be a smooth closed hypersurface 
in $\Omega$. Suppose that the Hele-Shaw problem starting from $\Gamma_{00}$ has a unique
smooth solution $\bigl(w,\Gamma:=\bigcup_{0\leq t\leq T}(\Gamma_t\times\{t\}) \bigr)$ 
in the time interval $[0,T]$ such that $\Gamma_t\subseteq\Omega$ for all $t\in[0,T]$. 
Then there exists a family of smooth functions $\{u_{0}^{\epsilon}\}_{0<\epsilon\leq 1}$ 
which are uniformly bounded in $\epsilon\in(0,1]$ and $(x,t)\in \overline{\Omega}_T$, 
such that if $u^{\epsilon}$ solves the Cahn-Hilliard problem \eqref{eq1.1}--\eqref{eq1.4}, then
\begin{itemize}
\item[\rm (i)] $\displaystyle{\lim_{\epsilon\rightarrow 0}} 
u^{\eps}(x,t)= \begin{cases}
1 &\qquad \mbox{if}\, (x,t)\in \mathcal{O}\\
-1 &\qquad \mbox{if}\, (x,t)\in \mathcal{I}
\end{cases}
\,\mbox{ uniformly on compact subsets}$,
where $\mathcal{I}$ and $\mathcal{O}$ stand for the ``inside" and ``outside" of $\Gamma$;
\item[\rm (ii)] $\displaystyle{\lim_{\epsilon\rightarrow 0}}
\bigl( \epsilon^{-1} f(u^{\epsilon})-\epsilon\Delta u^{\epsilon} \bigr)(x,t)=-w(x,t)$ uniformly on 
$\overline{\Omega}_T$.
\end{itemize}
\end{theorem}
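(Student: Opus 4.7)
The plan is to prove Theorem \ref{thm4.1} by the classical matched asymptotic expansion method combined with a spectral/energy stability argument. The family $\{u_0^\eps\}$ will not be an arbitrary approximation of the characteristic function of the inside region; it will be constructed explicitly from the asymptotic ansatz so that it is compatible with the geometry of $\Gamma_{00}$ and with the Hele-Shaw evolution $(w,\Gamma)$ on $[0,T]$.

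First I would build an approximate solution $u_A^\eps$ to \eqref{eq1.1}--\eqref{eq1.4} by matched expansions. In the outer region I would take the two-phase expansion $u_A^\eps \sim \pm 1 + \eps^2 u_2^\pm + \cdots$ with $w_A^\eps \sim w + \eps w_1 + \cdots$, so that the outer problem at leading order recovers $\Delta w=0$ in $\Omega\setminus\Gamma_t$. In an $O(\eps)$ tubular neighborhood of $\Gamma_t$ I would introduce the stretched normal coordinate $z=d(x,t)/\eps$, where $d$ is the signed distance to $\Gamma_t$, and posit an inner expansion
\begin{equation*}
u_A^\eps(x,t)=U_0(z,s,t)+\eps U_1(z,s,t)+\eps^2 U_2(z,s,t)+\cdots,
\end{equation*}
with analogous expansion for $w_A^\eps$, where $s$ denotes tangential variables on $\Gamma_t$. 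Plugging into \eqref{eq1.1}--\eqref{eq1.1a} and collecting powers of $\eps$, the leading inner ODE $U_0''=f(U_0)$ with $U_0(\pm\infty)=\pm 1$ forces $U_0(z)=\tanh(z/\sqrt{2})$, the standard profile for the quartic potential \eqref{eq1.4}. At each subsequent order the linearized operator is $\mathcal{M}:=-\pa_z^2+f'(U_0)$, whose kernel is spanned by $U_0'$; the Fredholm solvability condition at the appropriate order produces the Gibbs--Thomson law $w=\sigma\kappa$ on $\Gamma_t$ (with $\sigma$ as in \eqref{eq1.10}) and the Stefan-type condition \eqref{eq1.8}. Matching outer and inner expansions and choosing a smooth cutoff, one obtains $u_A^\eps, w_A^\eps\in C^\infty(\overline{\Omega}_T)$ satisfying \eqref{eq1.1}--\eqref{eq1.1a} up to residuals $r_1^\eps,r_2^\eps$ that are $O(\eps^K)$ in $L^\infty$ for any preassigned $K$. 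I would then set $u_0^\eps(x):=u_A^\eps(x,0)$; these satisfy the uniform boundedness requirement by construction.

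Second, I would estimate the remainder $R^\eps:=u^\eps-u_A^\eps$ and its chemical-potential counterpart. Writing $R^\eps$ as the solution of a perturbed linearized Cahn-Hilliard equation driven by $r_1^\eps,r_2^\eps$ and by the quadratic/cubic nonlinear remainder in $f$, I would test by $(-\Delta)^{-1}R^\eps$ and use the key spectral estimate of Lemma \ref{lem3.4}, applied along $u_A^\eps$ rather than along $u^\eps$, to absorb the dangerous $\eps^{-1}(f'(u_A^\eps)R^\eps,R^\eps)$ term into the positive part of the energy modulo a harmless $-C_0\|\nabla\Delta^{-1}R^\eps\|_{L^2}^2$. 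This produces a differential inequality of the form
\begin{equation*}
\tfrac{d}{dt}\|R^\eps\|_{H^{-1}}^2\le C\|R^\eps\|_{H^{-1}}^2+C\eps^{-N}\|R^\eps\|_{H^{-1}}^{2p}+C\eps^{2K-N'},
\end{equation*}
with $p>1$ coming from the cubic super-linearity. Choosing the asymptotic order $K$ large enough relative to $N,N'$ and applying a nonlinear Gronwall lemma (the continuous analogue of Lemma \ref{lem2.3}), I conclude $\|R^\eps(\cdot,t)\|_{H^{-1}}\le C\eps^M$ on $[0,T]$ for any prescribed $M$; interpolating with higher Sobolev bounds (also derived from the PDE via standard parabolic regularity) upgrades this to uniform convergence $R^\eps\to 0$ and $-\eps\Delta R^\eps+\eps^{-1}[f(u^\eps)-f(u_A^\eps)]\to 0$ in $L^\infty(\overline{\Omega}_T)$.

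Finally, since the leading profile satisfies $U_0(z)\to\pm 1$ as $z\to\pm\infty$ and $d$ is bounded away from $0$ on any compact subset of $\mathcal{I}\cup\mathcal{O}$, part (i) follows immediately from $u^\eps=u_A^\eps+R^\eps$. Part (ii) follows by noting that by construction $-\eps\Delta u_A^\eps+\eps^{-1}f(u_A^\eps)=-w_A^\eps+O(\eps^K)$ with $w_A^\eps\to w$ uniformly, together with the uniform smallness of the remainder contribution. The genuine obstacle is the spectral/remainder step: one needs the spectral lower bound to hold along the approximate solution $u_A^\eps$ rather than along the exact $u^\eps$, which requires that the asymptotic profile match the hypotheses of Lemma \ref{lem3.4} uniformly in $\eps$ and $t\in[0,T]$, and one must balance the loss of powers of $\eps$ in the nonlinear Gronwall step against the order $K$ of the asymptotic expansion, which is what ultimately forces the construction to be pushed to sufficiently high order.
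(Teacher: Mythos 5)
The paper offers no proof of Theorem \ref{thm4.1}: it is quoted verbatim as a known PDE result and attributed to \cite{Alikakos94}, so there is no internal argument to compare against. Your sketch reproduces the strategy of that cited proof — construction of an approximate solution by high-order matched asymptotic expansions around the $\tanh$ profile, estimation of the remainder in $H^{-1}$ using the spectral lower bound of \cite{Chen94} evaluated along the approximate solution, and a nonlinear Gronwall/continuation argument balanced against the order of the expansion — and you correctly identify the spectral/remainder step as the crux; this is essentially the same approach as the source, so no further comparison is needed.
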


\smallskip
We note that since $U^m$ is multi-valued on the edges of the mesh $\cT_h$, its zero-level
set is not well defined. To avoid this technicality, we use a continuous finite element 
interpolation of $U^m$ to define the numerical interface.  Let $\widehat{U}^m\in S_h$
denote the finite element approximation of $U^m$ which is defined using the averaged degrees 
of freedom of $U^m$ as the degrees of freedom for determining $\widehat{U}^m$ (cf. \cite{KP04}).
The following approximation results were proved in Theorem 2.1 of \cite{KP04}.

\begin{theorem}\label{lem4.1} 
Let $\mathcal{T}_h$ be a conforming mesh consisting of 
triangles when $d=2$, and tetrahedra when $d=3$. For $v_h\in V_h$, let 
$\widehat{v}_h$ be the finite element approximation  of $v_h$ as
defined above. Then for any $v_h\in V_h$ and $i=0,1$ there holds
\begin{align}\label{eqn_KP}
\sum_{K\in\mathcal{T}_h} \|v_h-\widehat{v}_h\|_{H^i(K)}^2
\leq C \sum_{e\in\cE_h^I} h^{1-2i}_e \|[v_h]\|_{L^2(e)}^2,
\end{align}
where $C>0$ is a constant independent of $h$ and $v_h$ but may depend on $r$ and the minimal 
angle $\theta_0$ of the triangles in $\mathcal{T}_h$. 
\end{theorem}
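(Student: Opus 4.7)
\medskip

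\textbf{Proof proposal.} The plan is to construct $\widehat{v}_h$ explicitly by a nodal averaging procedure and then bound the error $v_h-\widehat{v}_h$ element-by-element via (i) a scaling/norm-equivalence argument on $P_r(K)$, (ii) a chain argument reducing pointwise nodal discrepancies to sums of face jumps, and (iii) another scaling argument converting nodal jump values back to $\|[v_h]\|_{L^2(e)}$.

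First I would fix the standard Lagrange nodal basis for $S_h$ on $\mathcal{T}_h$. For every Lagrange node $z$ of $S_h$ let $\mathcal{N}(z):=\{K\in\mathcal{T}_h : z\in\overline{K}\}$ and define
\begin{equation*}
\widehat{v}_h(z):=\frac{1}{|\mathcal{N}(z)|}\sum_{K\in\mathcal{N}(z)}v_h|_K(z),
\end{equation*}
extended to a function in $S_h$ by Lagrange interpolation. Because nodes on an interior face $e$ are shared by both neighboring elements and receive the same averaged value, $\widehat{v}_h$ is indeed continuous, hence in $S_h$. If $z$ lies in the interior of a single element then $\widehat{v}_h(z)=v_h(z)$ automatically, so only nodes on $\mathcal{E}_h^I$ contribute to $v_h-\widehat{v}_h$.

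Next, since $(v_h-\widehat{v}_h)|_K\in P_r(K)$, an affine change of variables to a reference simplex $\widehat{K}$ and norm equivalence on the finite-dimensional space $P_r(\widehat{K})$ give, for $i=0,1$,
\begin{equation*}
\|v_h-\widehat{v}_h\|_{H^i(K)}^2 \le C h_K^{d-2i}\sum_{z\in\mathcal{N}_K}\bigl|v_h|_K(z)-\widehat{v}_h(z)\bigr|^2,
\end{equation*}
where $\mathcal{N}_K$ denotes the Lagrange nodes on $\overline{K}$. For each node $z\in\mathcal{N}_K$, the definition of the averaging yields
\begin{equation*}
\bigl|v_h|_K(z)-\widehat{v}_h(z)\bigr|\le \frac{1}{|\mathcal{N}(z)|}\sum_{K'\in\mathcal{N}(z)}\bigl|v_h|_K(z)-v_h|_{K'}(z)\bigr|.
\end{equation*}
By shape regularity, any two elements $K,K'\in\mathcal{N}(z)$ can be joined by a chain of elements in $\mathcal{N}(z)$ whose consecutive members share a face $e\subset\mathcal{E}_h^I$ containing $z$, and the length of every such chain is bounded by an absolute constant depending only on the minimal angle $\theta_0$. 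Telescoping along the chain bounds $|v_h|_K(z)-v_h|_{K'}(z)|$ by a finite sum of jumps $|[v_h](z)|$ over interior faces $e\ni z$ that are adjacent to $K$ in the appropriate combinatorial sense.

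Finally, one more scaling argument on each face $e$, together with the norm equivalence on $P_r(e)$, gives
\begin{equation*}
|[v_h](z)|^2 \le C h_e^{-(d-1)}\|[v_h]\|_{L^2(e)}^2 \qquad \text{for every node } z\in e.
\end{equation*}
Combining this with the factor $h_K^{d-2i}$ and using $h_K\sim h_e$ (shape regularity) yields $h_K^{d-2i}|[v_h](z)|^2 \le C h_e^{1-2i}\|[v_h]\|_{L^2(e)}^2$. Summing over $z\in\mathcal{N}_K$ and then over $K\in\mathcal{T}_h$ produces the desired bound \eqref{eqn_KP}; the number of times a given face $e$ is hit when the sums are reorganized is uniformly bounded by the valence of the mesh, i.e.\ by a constant depending only on $r$ and $\theta_0$.

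I expect the technical core to be the chain-telescoping step in three dimensions: verifying that every pair of tetrahedra containing a common node $z$ can indeed be connected by a short chain of face-adjacent tetrahedra all containing $z$, with chain length bounded only in terms of the shape-regularity constant. Once that combinatorial fact is in hand, the remainder is standard scaling and norm equivalence.
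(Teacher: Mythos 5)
The paper does not prove this result at all: it is quoted directly from Theorem 2.1 of Karakashian--Pascal \cite{KP04}, so there is no in-paper proof to compare against. Your proposal correctly reconstructs the standard argument of that reference --- nodal averaging to define $\widehat{v}_h\in S_h$, scaling/norm equivalence on $P_r(\widehat K)$ to reduce $\|v_h-\widehat v_h\|_{H^i(K)}^2$ to nodal discrepancies with the factor $h_K^{d-2i}$, telescoping over a face-connected chain in the star of each node, and an inverse estimate on faces to pass from $|[v_h](z)|$ to $h_e^{-(d-1)/2}\|[v_h]\|_{L^2(e)}$ --- and the combinatorial fact you single out (that the elements of a conforming simplicial mesh sharing a node form a strongly face-connected star of bounded cardinality, which follows from shape regularity and the pseudomanifold structure of the vertex link) is precisely the point that makes the chain step work, so the argument is sound.
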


By the construction, $\widehat{U}^m$ is expected to be very close to $U^m$, hence, 
$\widehat{U}^m$ should also be very close to $u(t_m)$. This is indeed the case as
stated in the following theorem, which says that Theorem \ref{thm3.1} also hold for $\widehat{U}^m$.

\begin{theorem}\label{lem4.2}
Let $U^{m}$ denote the solution of scheme \eqref{eq3.1}--\eqref{eq3.5}
and $\widehat{U}^{m}$ denote its finite element approximation as defined above. Then 
under the assumptions of Theorem \ref{thm3.1} the error estimates for $U^m$ given in 
Theorem \ref{thm3.1} are still valid for $\widehat{U}^{m}$, in particular, there holds
\begin{align}\label{eq3.36bx}
\max_{0\leq m\leq M} \|u(t_m)-\widehat{U}^m\|_{L^\infty(\cT_h)}
&\leq C\Bigl(h^{2}|\ln h|\epsilon^{-\gamma} 
+h^{-\frac{d}{2}}\epsilon^{-\frac{7}{2}} r(h,k;\epsilon,d,\sigma_i)^{\frac{1}{2}} \Bigr).
\end{align}

\end{theorem}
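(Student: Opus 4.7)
The plan is to use the triangle inequality to reduce everything to controlling the averaging defect $\|U^m - \widehat{U}^m\|_X$ in the various norms $X$ that appear in Theorem \ref{thm3.1}. For any such norm,
\begin{equation*}
\|u(t_m) - \widehat{U}^m\|_X \leq \|u(t_m) - U^m\|_X + \|U^m - \widehat{U}^m\|_X,
\end{equation*}
and the first summand is exactly what Theorem \ref{thm3.1} bounds. So the task is to show that the second summand is no larger (up to constants) than the bound already established for the first.

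The decisive observation is that $u(t_m)$ is continuous across interior edges, hence $[U^m]_e = -[u(t_m)-U^m]_e = -[E^m]_e$ on every $e\in\mathcal{E}_h^I$. Combining the trivial pointwise bound $\|[E^m]\|_{L^\infty(e)} \leq 2\|E^m\|_{L^\infty(\cT_h)}$ with $|e|=O(h_e^{d-1})$ gives
\begin{equation*}
\|[U^m]\|_{L^2(e)}^2 \leq C h_e^{d-1}\,\|u(t_m)-U^m\|_{L^\infty(\cT_h)}^2.
\end{equation*}
Inserting this into Theorem \ref{lem4.1} with $i=0$, applied locally on each $K$, produces
\begin{equation*}
\|U^m - \widehat{U}^m\|_{L^2(K)}^2
\leq C\sum_{e\subset \partial K} h_e\|[U^m]\|_{L^2(e)}^2
\leq C h_K^{d}\,\|E^m\|_{L^\infty(\cT_h)}^2.
\end{equation*}
A standard inverse estimate on the polynomial $(U^m-\widehat{U}^m)|_K$ then yields
\begin{equation*}
\|U^m-\widehat{U}^m\|_{L^\infty(K)} \leq C h_K^{-d/2}\|U^m-\widehat{U}^m\|_{L^2(K)} \leq C\|E^m\|_{L^\infty(\cT_h)},
\end{equation*}
and maximizing over $K$ combined with \eqref{eq3.62} produces exactly \eqref{eq3.36bx}.

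The remaining estimates of Theorem \ref{thm3.1} are handled by the same two-step recipe, with different bookkeeping: for the broken $H^1$ norm and the mesh-dependent $H^{-1}$ norm use Theorem \ref{lem4.1} with the appropriate $i\in\{0,1\}$ and the stronger jump control $\sum_e h_e^{-1}\|[E^m]\|_{L^2(e)}^2 \leq C|||E^m|||_a^2$, which is available in the $\ell^2$-in-time sense from \eqref{eq3.60}--\eqref{eq3.60b}. The main obstacle is verifying that the powers of $h$ in the edge-to-volume trace factor and the inverse inequality always match so that $\|U^m-\widehat{U}^m\|_X$ inherits the same $\epsilon^{-1}$-polynomial order as Theorem \ref{thm3.1} supplies for $\|u(t_m)-U^m\|_X$; in the $L^\infty$ case above, the cancellation $h^{d} \cdot h^{-d} = 1$ is what prevents any additional $\epsilon^{-1}$ powers from sneaking in, and analogous cancellations carry the other norms through without loss.
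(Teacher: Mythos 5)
The paper omits the proof of this theorem, deferring to \cite{Feng_Li14}, and the argument there is precisely the one you give: triangle inequality plus the Karakashian--Pascal averaging estimate of Theorem \ref{lem4.1}, driven by the observation that $[U^m]_e=-[u(t_m)-U^m]_e$ on interior faces because the exact solution is continuous. Your proof is correct; the one point to make explicit is that the $L^\infty$ bound \eqref{eq3.36bx} requires the \emph{elementwise} form of the averaging estimate, $\|U^m-\widehat{U}^m\|_{L^2(K)}^2\leq C\sum_{e\subset\omega_K}h_e\|[U^m]\|_{L^2(e)}^2$ over the local patch $\omega_K$ (which \cite{KP04} does provide, since the nodal averaging only involves elements meeting that node), because the global $L^2$ statement of Theorem \ref{lem4.1} combined with a global inverse inequality would cost an extra factor $h^{-d/2}$.
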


We omit the proof to save space and refer the reader to \cite{Feng_Li14} to see a proof
of the same nature for the related Allen-Cahn problem. 

%

We are now ready to state the first main theorem of this section.

\begin{theorem}\label{thm4.2}
Let $\{\Gamma_t\}_{t\geq0}$ denote the zero level set of the Hele-Shaw problem 
and  $\bigl( U_{\epsilon,h,k}(x,t), W_{\epsilon,h,k}(x,t)\bigr)$ denote the piecewise linear 
interpolation in time of the finite element interpolation $\{(\widehat{U}^m, \widehat{W}^m)\}$ of the DG 
solution $\{(U^m, W^m)\}$, namely,
\begin{align}
&U_{\epsilon,h,k}(x,t):=\frac{t-t_{m-1}}{k}\widehat{U}^{m}(x)+\frac{t_{m}-t}{k}\widehat{U}^{m-1}(x),	\label{eq4.1}\\
&W_{\epsilon,h,k}(x,t):=\frac{t-t_{m-1}}{k}W^{m}(x)+\frac{t_{m}-t}{k}W^{m-1}(x),	\label{eq4.2}
\end{align}
for $t_{m-1}\leq t\leq t_{m}$ and $1\leq m\leq M$.
Then, under the mesh and starting value constraints of Theorem \ref{thm3.1} 
and $k=O(h^{2-\gamma})$ with $\gamma>0$, we have
\begin{itemize}
\item[\rm (i)] $U_{\epsilon,h,k}(x,t) \stackrel{\eps\searrow 0}{\longrightarrow} 1$ uniformly 
on compact subset of $\mathcal{O}$,
\item[\rm (ii)] $U_{\epsilon,h,k}(x,t) \stackrel{\eps\searrow 0}{\longrightarrow} -1$ uniformly on 
compact subset of $\mathcal{I}$. 
\item[\rm (iii)] Moreover, in the case that dimension $d=2$, when $k=O(h^{3})$, 
suppose that $W^0$ satisfies $\|w_0^{\epsilon}-W^0\|_{L^2}\leq Ch^{\beta}$ for some 
$\beta>\frac32$, then we have
$W_{\epsilon,h,k}(x,t)\stackrel{\eps\searrow 0}{\longrightarrow} -w(x,t)$ uniformly on $\overline{\Omega}_T$.
\end{itemize}
\end{theorem}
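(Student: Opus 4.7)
My plan is to combine the PDE-level convergence results of Theorem \ref{thm4.1} (Alikakos--Bates--Chen) with the DG error estimates in Theorem \ref{thm3.1} (extended to $\widehat{U}^m$ via Theorem \ref{lem4.2}) through a triangle-inequality argument, the only essential ingredient beyond what is already stated being the triviality that a piecewise linear temporal interpolant lies between its two nodal values.

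For part (i), fix a compact set $K\subset\mathcal{O}$, a time $t\in[t_{m-1},t_m]$, and write
\[
|U_{\epsilon,h,k}(x,t)-1|\le \max_{j\in\{m-1,m\}}\bigl|\widehat{U}^{j}(x)-1\bigr|
\le \max_{j}\bigl\|\widehat{U}^{j}-u^{\epsilon}(\cdot,t_{j})\bigr\|_{L^{\infty}(\cT_h)}
+\max_{j}\bigl|u^{\epsilon}(x,t_{j})-1\bigr|.
\]
The second term tends to $0$ uniformly on $K$ as $\epsilon\searrow 0$ by Theorem \ref{thm4.1}(i). The first term is bounded by \eqref{eq3.36bx}, which reads $C\bigl(h^{2}|\ln h|\epsilon^{-\gamma}+h^{-d/2}\epsilon^{-7/2}r(h,k;\epsilon,d,\sigma_i)^{1/2}\bigr)$. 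Under the mesh constraints of Theorem \ref{thm3.1}, together with $k=O(h^{2-\gamma})$, both summands are of the form $h^{\text{positive}}\epsilon^{-\text{power}}$, and the implicit relation between $h$ and $\epsilon$ forced by those constraints (the polynomial mesh conditions of the form $h^{2-d/2}\le C\epsilon^{M}$) drives the product to zero as $\epsilon\searrow 0$. Part (ii) is identical after replacing $1$ by $-1$ and $\mathcal{O}$ by $\mathcal{I}$.

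For part (iii), I repeat the same scheme with $W$ in place of $U$, using Theorem \ref{thm4.1}(ii), which identifies $\lim_{\epsilon\to 0}\bigl(\epsilon^{-1}f(u^{\epsilon})-\epsilon\Delta u^{\epsilon}\bigr)=-w$ uniformly on $\overline{\Omega}_T$, and the $L^\infty$ estimate \eqref{eq3.63b} for $\|w(t_m)-W^m\|_{L^{\infty}}$. Writing
\[
|W_{\epsilon,h,k}(x,t)+w(x,t)|\le \max_{j}\|W^{j}-w^{\epsilon}(\cdot,t_{j})\|_{L^{\infty}}
+\max_{j}|w^{\epsilon}(x,t_{j})+w(x,t)|
\]
reduces the problem to showing the nodal error tends to zero. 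Specializing \eqref{eq3.63b} to $d=2$, $k=O(h^{3})$, and $\beta>3/2$ yields a bound of the form $h^{1/2}\epsilon^{-3}\bigl(\rho_{1}(\epsilon,2)^{1/2}+\rho_{3}(\epsilon)^{1/2}\bigr)+h^{\beta-1}+h^{2}|\ln h|\epsilon^{-\gamma'}$, each summand of which is driven to $0$ by the mesh constraints. The PDE term is handled by adding and subtracting $w^\epsilon(x,t_m)$ and controlling $|w^\epsilon(x,t_m)-w^\epsilon(x,t)|$ by $k\|w_{t}^\epsilon\|_{L^\infty}$ (finite uniformly in $\epsilon$ on compact sets by the smoothness of the Hele--Shaw solution) together with the uniform convergence on $\overline{\Omega}_T$.

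The main obstacle is not the structure of the argument, which is standard, but verifying that all polynomial powers of $\epsilon^{-1}$ appearing on the right of \eqref{eq3.36bx} and \eqref{eq3.63b}, once multiplied by the various positive powers of $h$ and $k$, can be simultaneously absorbed under the compatibility conditions of Theorem \ref{thm3.1} (in particular the constraints $h^{2-d/2}\le (C_{1}C_{2})^{-1}\epsilon^{\max\{\sigma_{1}+11/2,\sigma_{3}+4\}}$, $k\le \epsilon^{3}$, and $k\le C\epsilon^{4(6+d)/(4-d)+(4d-2)\sigma_{1}}$). One has to verify that a choice $h=\epsilon^{q}$ with a sufficiently large exponent $q$ (depending on $d$ and the $\sigma_{i}$) forces all error terms to vanish while remaining compatible with $k=O(h^{2-\gamma})$ (resp.\ $k=O(h^{3})$); this is a bookkeeping check on the exponents but is where the polynomial-order nature of the estimates is essential.
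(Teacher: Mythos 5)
Your proposal is correct and follows essentially the same route as the paper: a triangle-inequality splitting of $U_{\epsilon,h,k}-(\pm 1)$ (resp.\ $W_{\epsilon,h,k}+w$) into the numerical error, controlled by the polynomial-in-$\epsilon^{-1}$ $L^\infty$ estimates \eqref{eq3.36bx} and \eqref{eq3.63b} under the mesh constraints, plus the PDE convergence terms handled by Theorem \ref{thm4.1}. Your reduction to nodal values via the convex-combination property and the explicit $k\|w_t^\epsilon\|_{L^\infty}$ control of the time-interpolation error is a slightly more careful rendering of a step the paper absorbs into its $L^\infty(\Omega_T)$ bounds, but it is not a different argument.
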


\smallskip
\begin{proof}
For any compact set $A\subset\mathcal{O}$ and for any $(x,t)\in A$, we have
\begin{align} \label{eq4.4}
|U_{\epsilon,h,k}-1|&\leq |U_{\epsilon,h,k}-u^{\epsilon}(x,t)|+|u^{\epsilon}(x,t)-1| \\
&\leq |U_{\epsilon,h,k}-u^{\epsilon}(x,t)|_{L^{\infty}(\Omega_T)}+|u^{\epsilon}(x,t)-1|.\nonumber
\end{align}
Equation \eqref{eq3.62} of Theorem \ref{thm3.1} infers that there exists a constant $0<\alpha<\frac{4-d}{2}$ such that
\begin{equation}\label{eq4.5}
|U_{\epsilon,h,k}-u^{\epsilon}(x,t)|_{L^{\infty}(\Omega_T)}\leq Ch^{\alpha}.
\end{equation}
The first term on the right-hand side of \eqref{eq4.4} tends to $0$ when $\epsilon\searrow 0$
(note that $h,k\searrow 0$, too). The second term converges uniformly to $0$ on the compact set $A$, 
which is ensured by (i) of Theorem \ref{thm4.1}.  Hence, the assertion (i) holds.

To show (ii), we only need to replace $\mathcal{O}$ by $\mathcal{I}$ and $1$ by $-1$ in the above proof.
To prove (iii), under the assumptions $k=O(h^{3})$, 
\eqref{eq3.63b} in Theorem \ref{thm3.1} implies that there exists a positive constant $0<\zeta<\frac{4-d}{2}$ 
such that
\begin{equation}\label{eq4.6}
\|W_{\epsilon,h,k}-w^{\epsilon}\|_{L^{\infty}(\Omega_T)}\leq Ch^{\zeta}.
\end{equation}
Then by the triangle inequality we obtain for any $(x,t)\in \overline{\Omega}_T$,
\begin{align} \label{eq4.7}
|W_{\epsilon,h,k}(x,t)-(-w)|&\leq|W_{\epsilon,h,k}(x,t)-w^{\epsilon}(x,t)|+|w^{\epsilon}(x,t)-(-w)|,\\
&\leq\|W_{\epsilon,h,k}(x,t)-w^{\epsilon}(x,t)\|_{L^{\infty}(\Omega_T)}+|w^{\epsilon}(x,t)-(-w)|.\nonumber
\end{align}
The first term on the right-hand side of \eqref{eq4.7} tends to $0$ when $\epsilon\searrow 0$
(note that $h,k\searrow 0$, too). The second term converges uniformly to $0$ in $\overline{\Omega}_T$, 
which is ensured by (ii) of Theorem \ref{thm4.1}. Thus the assertion (iii) is proved. The proof is complete.
\end{proof}

The second main theorem of this section which is given below addresses the convergence 
of numerical interfaces.

\begin{theorem}\label{thm4.3}
Let $\Gamma_t^{\epsilon,h,k}:=\{x\in\Omega;\, U_{\epsilon,h,k}(x,t)=0\}$
be the zero level set of $U_{\epsilon,h,k}(x,t)$, then under the 
assumptions of Theorem \ref{thm4.2}, we have
\[
\sup_{x\in\Gamma_t^{\epsilon,h,k}} \mbox{\rm dist}(x,\Gamma_t)
\stackrel{\epsilon\searrow 0}{\longrightarrow} 0 \quad\mbox{uniformly on $[0,T]$}.
\]
\end{theorem}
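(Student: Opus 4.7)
The plan is to argue by contradiction, reducing the interface convergence statement to the pointwise convergence of $U_{\epsilon,h,k}$ to $\pm 1$ on compact subsets of $\mathcal{I}$ and $\mathcal{O}$ already provided by parts (i)--(ii) of Theorem \ref{thm4.2}. Suppose that the conclusion fails. Then there exist $\delta>0$, sequences $\epsilon_n\searrow 0$, corresponding mesh parameters $h_n,k_n\searrow 0$ satisfying the hypotheses of Theorem \ref{thm4.2}, times $t_n\in[0,T]$, and points $x_n\in \Gamma_{t_n}^{\epsilon_n,h_n,k_n}$ such that
\begin{equation*}
\mbox{dist}(x_n,\Gamma_{t_n})\geq \delta \qquad\mbox{for all } n.
\end{equation*}
By definition of $\Gamma_t^{\epsilon,h,k}$ we have $U_{\epsilon_n,h_n,k_n}(x_n,t_n)=0$ for every $n$.

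Next I would use a compactness argument. Since $\overline{\Omega}\times[0,T]$ is compact, a subsequence (not relabeled) satisfies $(x_n,t_n)\to(x_0,t_0)\in \overline{\Omega}\times[0,T]$. The smoothness of the Hele-Shaw flow $\Gamma=\bigcup_{t\in[0,T]}(\Gamma_t\times\{t\})$ assumed in Theorem \ref{thm4.1} implies that the map $t\mapsto \Gamma_t$ is continuous in Hausdorff distance, hence $\mbox{dist}(x_0,\Gamma_{t_0})\geq \delta>0$. Consequently $(x_0,t_0)$ lies strictly inside either $\mathcal{I}$ or $\mathcal{O}$, and a closed spacetime ball $\overline{B}_\rho(x_0,t_0)\cap(\overline{\Omega}\times[0,T])$, with $\rho<\delta/2$ small enough, forms a compact subset of either $\mathcal{I}$ or $\mathcal{O}$. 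For all sufficiently large $n$, the point $(x_n,t_n)$ lies in this compact neighborhood.

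Now I would invoke parts (i)--(ii) of Theorem \ref{thm4.2}: on this compact neighborhood $U_{\epsilon_n,h_n,k_n}\to 1$ or $U_{\epsilon_n,h_n,k_n}\to -1$ uniformly as $\epsilon_n\searrow 0$. In particular, evaluating at $(x_n,t_n)$,
\begin{equation*}
\lim_{n\to\infty} U_{\epsilon_n,h_n,k_n}(x_n,t_n)=\pm 1,
\end{equation*}
which contradicts $U_{\epsilon_n,h_n,k_n}(x_n,t_n)=0$ and establishes the claim.

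The main technical point I anticipate is the continuity of $t\mapsto\Gamma_t$ used to locate $(x_0,t_0)$ away from $\Gamma$; this follows from the standing hypothesis that the Hele-Shaw problem admits a unique smooth solution on $[0,T]$ (Theorem \ref{thm4.1}), so the spacetime interface $\Gamma$ is a smooth hypersurface in $\overline{\Omega}_T$ and its complement splits into the open sets $\mathcal{I}$ and $\mathcal{O}$. Everything else is essentially a soft contradiction/compactness argument, with the quantitative input---the uniform $L^\infty$ decay of $U_{\epsilon,h,k}-u^\epsilon$ under the mesh constraints $k=O(h^{2-\gamma})$---already absorbed into Theorem \ref{thm4.2}.
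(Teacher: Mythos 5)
Your proof is correct and uses the same essential ingredient as the paper: the uniform convergence of $U_{\epsilon,h,k}$ to $\pm 1$ on compact subsets of $\mathcal{O}$ and $\mathcal{I}$ from Theorem \ref{thm4.2}(i)--(ii), contradicted by $U_{\epsilon,h,k}=0$ on the numerical interface. The paper packages this directly — for each $\eta\in(0,1)$ it takes the compact sets $A=\mathcal{O}\setminus\mathcal{N}_\eta$ and $B=\mathcal{I}\setminus\mathcal{N}_\eta$ and shows the numerical interface must lie in the tubular neighborhood $\mathcal{N}_\eta$ once $\epsilon$ is small — whereas you argue by contradiction with a subsequence extraction, which additionally invokes (harmlessly) the Hausdorff continuity of $t\mapsto\Gamma_t$; the substance is the same.
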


\begin{proof}
For any $\eta\in(0,1)$,  define the open tabular neighborhood $\mathcal{N}_{\eta}$ of width 
$2\eta$ of $\Gamma_t$ as
\begin{equation}\label{eq4.8}
\mathcal{N}_{\eta}:=\{(x,t)\in\Omega_T;\, \mbox{\rm dist}(x,\Gamma_t)<\eta\}.
\end{equation}
Let $A$ and $B$ denote the complements of the neighborhood $\mathcal{N}_{\eta}$ in $\mathcal{O}$ 
and $\mathcal{I}$, respectively, i.e.
\begin{equation*}
A=\mathcal{O}\setminus\mathcal{N}_{\eta} \qquad\mbox{and}\qquad
B=\mathcal{I}\setminus\mathcal{N}_{\eta}.
\end{equation*}
Note that $A$ is a compact subset outside $\Gamma_t$ and $B$ is a compact subset inside $\Gamma_t$,
then there exists ${\epsilon_3}>0$, which only depends on $\eta$, such that 
for any $\epsilon\in (0,{\epsilon_3})$
\begin{align}
&|U_{\epsilon,h,k}(x,t)-1|\leq\eta\quad\forall(x,t)\in A,\label{eq4.9}\\
&|U_{\epsilon,h,k}(x,t)+1|\leq\eta\quad\forall(x,t)\in B.\label{eq4.10}
\end{align}
Now for any $t\in[0,T]$ and $x\in \Gamma_t^{\epsilon,h,k}$, from $U_{\epsilon,h,k}(x,t)=0$ we have
\begin{align}
&|U_{\epsilon,h,k}(x,t)-1|=1\qquad\forall(x,t)\in A,\label{eq4.11}\\
&|U_{\epsilon,h,k}(x,t)+1|=1\qquad\forall(x,t)\in B.\label{eq4.12}
\end{align}
\eqref{eq4.9} and \eqref{eq4.11} imply that $(x,t)$ is not in $A$, and \eqref{eq4.10} 
and \eqref{eq4.12} imply that $(x,t)$ is not in $B$, then $(x,t)$ must lie in the tubular 
neighborhood $\mathcal{N}_{\eta}$. Therefore, for any $\epsilon\in(0,\epsilon_3)$,
\begin{equation}\label{eq4.13}
\sup_{x\in\Gamma_t^{\epsilon,h,k}} \mbox{\rm dist}(x,\Gamma_t) \leq\eta \qquad\mbox{uniformly on $[0,T]$}.
\end{equation}
The proof is complete.
\end{proof}

\section{Numerical experiments}\label{sec-5}
In this section, we present three two-dimensional numerical tests to gauge the performance 
of the proposed fully discrete MIP-DG methods using the linear element (i.e., $r=1$). The square 
domain $\Omega=[-1,1]^2$ is used in all three tests and the initial condition is chosen to have
the form $u_0=\tanh\bigl(\frac{d_0(x)}{\sqrt{2}\epsilon}\bigr)$, where $d_0(x)$ denotes the signed 
distance from $x$ to the initial interface $\Gamma_0$.

Our first test uses a smooth initial condition to satisfy the requirement for $u_0$, consequently,
the theoretical results established in this paper apply to this test problem.
On the other hand, non-smooth initial conditions are used in the second and third tests, 
hence, the theoretical results of this paper may not apply. But we still use our MIP-DG methods
to compute the error order, energy decay and the evolution of the numerical interfaces. 
Our numerical results suggest that the proposed DG schemes work well, even a convergence 
theory is missing for them.

\medskip
$\mathbf{Test\, 1.}$ Consider the Cahn-Hilliard problem (\ref{eq1.1})-(\ref{eq1.4}) 
with the following initial condition:
{\small
\begin{equation*}
 u_0(x)=\tanh\Bigl(\frac{d_0(x)}{\sqrt{2}\eps}\Bigr),
\end{equation*}
}
where $\tanh(t)=(e^t-e^{-t})/(e^t+e^{-t})$, and $d_0(x)$ represents the signed distance function 
to the ellipse:
\begin{equation*}
\frac{x_1^2}{0.36}+\frac{x_2^2}{0.04}=1.
\end{equation*}
Hence, $u_0$ has the desired form as stated in Proposition \ref{prop2.3}.

Table \ref{tab3} shows the spatial $L^2$ and $H^1$-norm errors and convergence rates, 
which are consistent with what are proved for the linear element in the convergence theorem. 
$\epsilon=0.1$ is used to generate the table.
\begin{table}[htb]
\begin{center}
\begin{tabular}{|l|c|c|c|c|}
\hline
& $L^\infty(L^2)$ error & $L^\infty(L^2)$ order& $L^2(H^1)$ error& $L^2(H^1)$ order\\ \hline
$h=0.4\sqrt{2}$ & 0.53325 & & 0.84260 & \\ \hline
$h=0.2\sqrt{2}$ &0.21280  &1.3253 & 0.64843 &0.3779 \\ \hline
$h=0.1\sqrt{2}$ & 0.07164 &1.5707& 0.43273 &0.5835 \\ \hline
$h=0.05\sqrt{2}$ & 0.01779 & 2.0097& 0.21411 &1.0151 \\ \hline
$h=0.025\sqrt{2}$ & 0.00454 & 1.9703& 0.10890 & 0.9753\\ \hline
\end{tabular}
\smallskip
\caption{Spatial errors and convergence rates of Test 1 with $\epsilon=0.1$.} 
\label{tab3} 
\end{center}
\end{table}

Figure \ref{3energy} plots the change of the discrete energy $E_h(U^\ell)$ in time, which should decrease according to \eqref{eq3.15}.
This graph clearly confirms this decay property. 
\begin{figure}[tbh]
   \centering
   \includegraphics[width=3.5in,,height=2.0in]{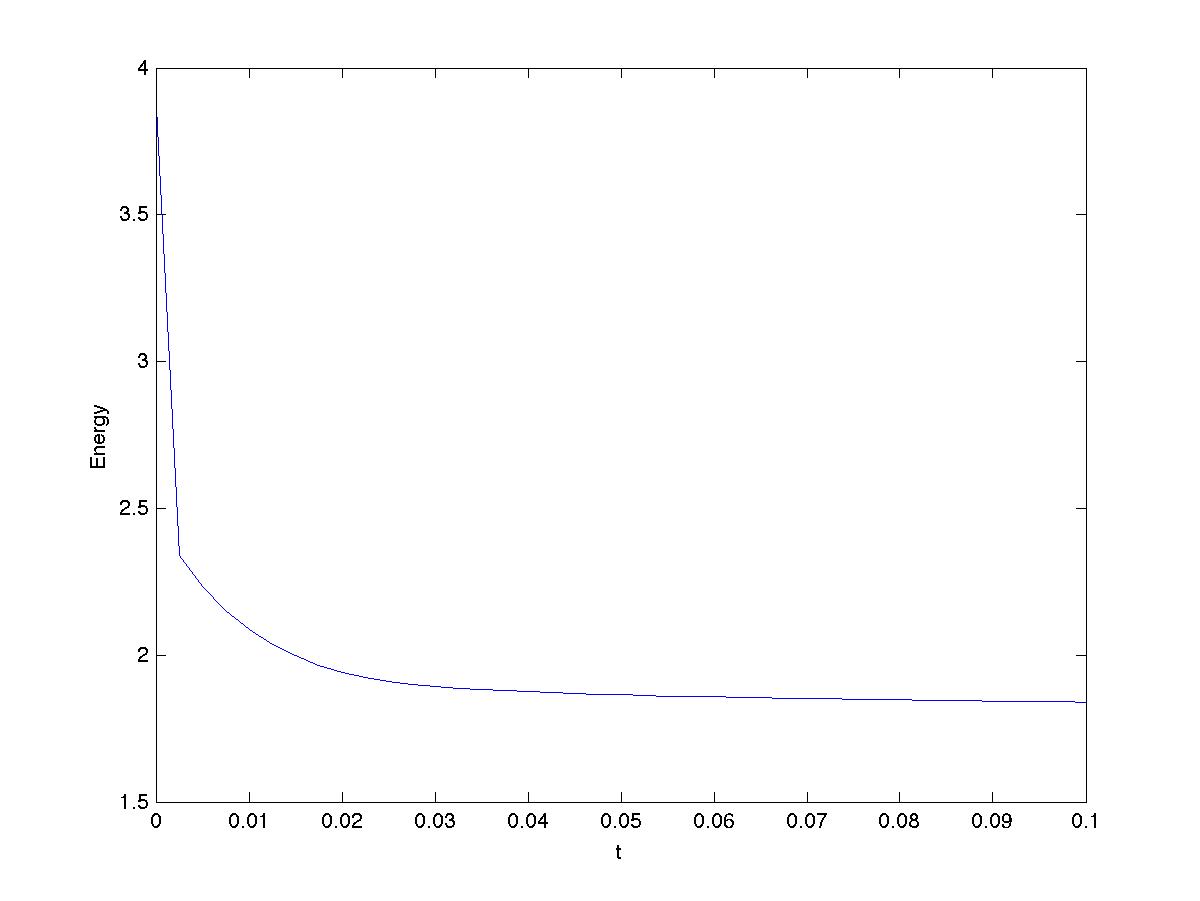} 
   \caption{Decay of the numerical energy $E_h(U^\ell)$ of Test 1.} \label{3energy}
\end{figure}
Figure \ref{figure9012} displays four snapshots at four fixed time points of the numerical interface 
with four different $\eps$.  They clearly indicate that at each time point the numerical interface 
converges to the sharp interface $\Gamma_t$ of the Hele-Shaw flow 
as $\epsilon$ tends to zero. It also shows that the numerical interface evolves faster in time for 
larger $\epsilon$ and confirms the mass conservation property of the 
Cahn-Hilliard problem as the total mass does not change in time, which approximates a constant 3.064.

\begin{figure}[th]
\centering
\includegraphics[height=1.8in,width=2.4in]{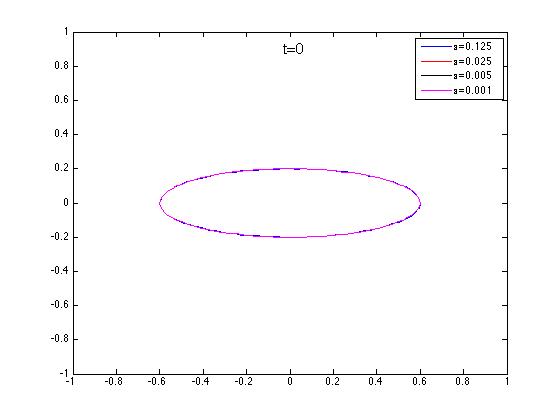}
\includegraphics[height=1.8in,width=2.4in]{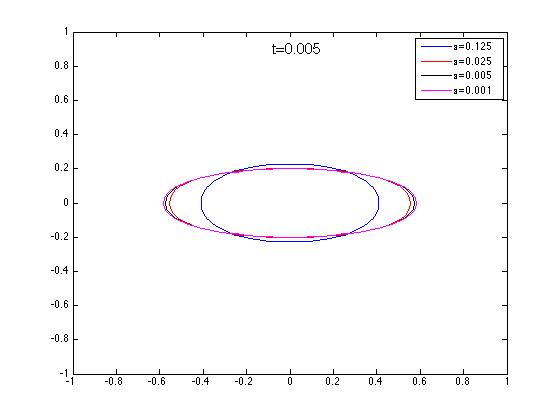}

\includegraphics[height=1.8in,width=2.4in]{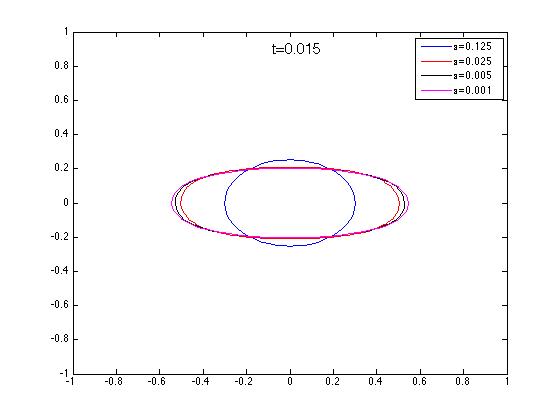}
\includegraphics[height=1.8in,width=2.4in]{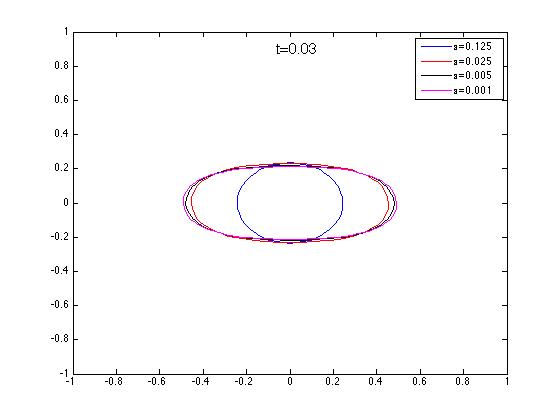}
\caption{Test 1: Snapshots of the zero-level set of $u^{\epsilon,h,k}$ at time $t=0, 
0.005, 0.015, 0.03$ and $\epsilon=0.125, 0.025, 0.005, 0.001$.}\label{figure9012}
\end{figure}

\medskip
$\mathbf{Test\, 2.}$ Consider the Cahn-Hilliard problem (\ref{eq1.1})-(\ref{eq1.4}) 
with the following initial condition:
{\small
\begin{equation*}
u_0(x)=\tanh\Bigl(\frac{1}{\sqrt{2}\eps}
\bigl(\min\bigl\{\sqrt{(x_1+0.3)^2+x_2^2}-0.3,\sqrt{(x_1-0.3)^2+x_2^2}-0.25\bigr\}\bigr)\Bigr).
\end{equation*}
}
We note that $u_0$ can be written as 
\[
u_0(x)=\tanh\Bigl(\frac{d_0(x)}{\sqrt{2}\eps} \Bigr).
\]
Here $d_0(x)$ represents the signed distance function. We note that $u_0$ does not
have  the desired form as stated in Proposition \ref{prop2.3}.

Table \ref{tab1} shows the spatial $L^2$ and $H^1$-norm errors and convergence rates, 
which are consistent with what are proved for the linear element in the convergence theorem. 
$\eps=0.1$ is used to generate the table.
\begin{table}[htb]
\begin{center}
\begin{tabular}{|l|c|c|c|c|}
\hline
& $L^\infty(L^2)$ error & $L^\infty(L^2)$ order& $L^2(H^1)$ error& $L^2(H^1)$ order\\ \hline
$h=0.4\sqrt{2}$ & 0.26713 & & 0.35714 & \\ \hline
$h=0.2\sqrt{2}$ &0.07161  &1.8993 & 0.18411 &0.9559 \\ \hline
$h=0.1\sqrt{2}$ & 0.01833 &1.9660& 0.09620 &0.9365 \\ \hline
$h=0.05\sqrt{2}$ & 0.00476 & 1.9452& 0.04928 &0.9650 \\ \hline
$h=0.025\sqrt{2}$ & 0.00121 & 1.9760& 0.02497 & 0.9808\\ \hline
\end{tabular}
\smallskip
\caption{Spatial errors and convergence rates of Test 2 with $\epsilon=0.1$.} 
\label{tab1} 
\end{center}
\end{table}
Figure \ref{test1_energy} plots the change of the discrete energy
$E_h(U^\ell)$ in time, which should decrease according to \eqref{eq3.15}.
This graph clearly confirms this decay property. 
\begin{figure}[tbh]
   \centering
   \includegraphics[width=3.5in,,height=2.0in]{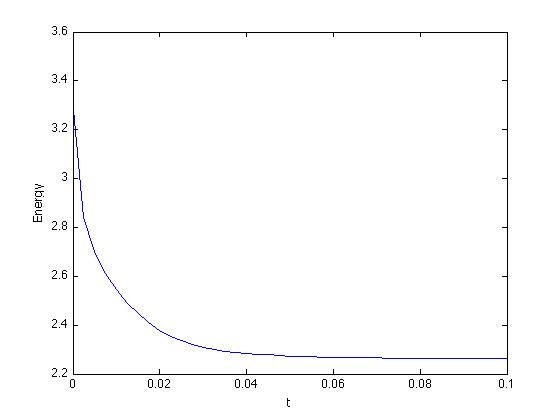} 
   \caption{Decay of the numerical energy $E_h(U^\ell)$ of Test 2.} \label{test1_energy}
\end{figure}
Figure \ref{figure1234} displays four snapshots at four fixed time points of the numerical 
interface with four different $\eps$. They clearly indicate that at each time point the 
numerical interface converges to the sharp interface $\Gamma_t$ of the Hele-Shaw flow 
as $\epsilon$ tends to zero. It again shows that the numerical interface evolves faster in time for 
larger $\epsilon$ and confirms the mass conservation property of the
Cahn-Hilliard problem as the total mass does not change in time, which approximates a constant 3.032.

\begin{figure}[th]
\centering
\includegraphics[height=1.8in,width=2.4in]{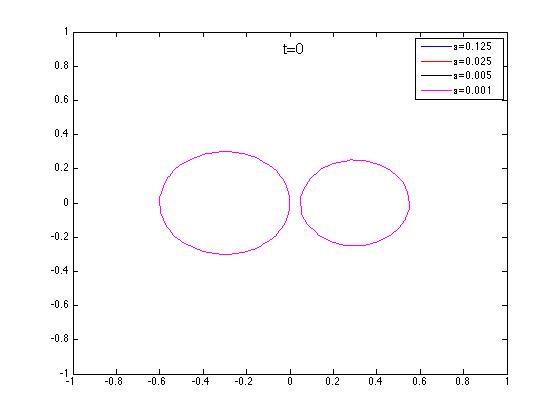}
\includegraphics[height=1.8in,width=2.4in]{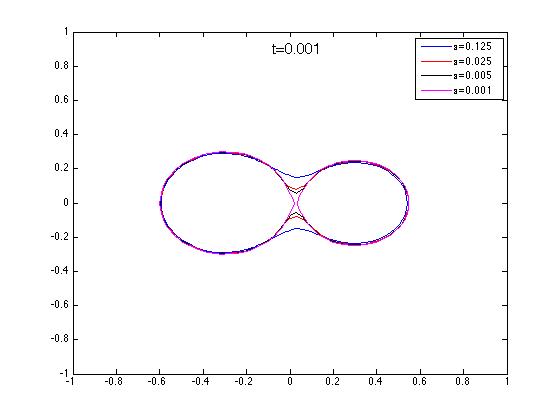}

\includegraphics[height=1.8in,width=2.4in]{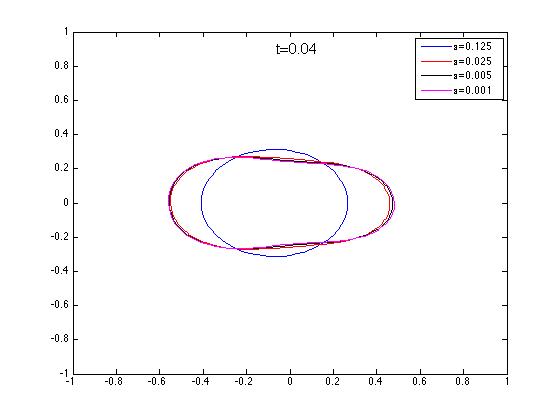}
\includegraphics[height=1.8in,width=2.4in]{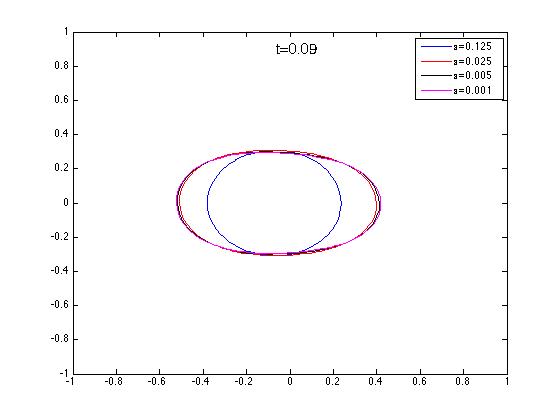}
\caption{Test 2: Snapshots of the zero-level set of $u^{\epsilon,h,k}$ at time $t=0, 
0.001, 0.04, 0.09$ and $\epsilon=0.125, 0.025, 0.005, 0.001$.}\label{figure1234}
\end{figure}

\medskip
$\mathbf{Test\, 3.}$ Consider the Cahn-Hilliard problem \eqref{eq1.1}--\eqref{eq1.4}
with the following initial condition:
\begin{align*}
u_0(x)=\tanh\Bigl(\frac{1}{\sqrt{2}\eps}\bigl(\min\bigl\{\sqrt{(x_1+0.3)^2+x_2^2}-0.2,\sqrt{(x_1-0.3)^2+x_2^2}-0.2,\\
\sqrt{x_1^2+(x_2+0.3)^2}-0.2,\sqrt{x_1^2+(x_2-0.3)^2}-0.2\bigr\}\bigr)\Bigr).
\end{align*}
Notice that the above $u_0$ does not have the desired form as stated in Proposition \ref{prop2.3}.
 
Table \ref{tab2} shows the spatial $L^2$ and $H^1$-norm errors and convergence rates with $\eps=0.1$, 
which are consistent with what are proved for the linear element in the convergence theorem. 
\begin{table}[t]
\begin{center}
\begin{tabular}{|l|c|c|c|c|}
\hline
& $L^\infty(L^2)$ error & $L^\infty(L^2)$ order& $L^2(H^1)$ error& $L^2(H^1)$ order\\ \hline
$h=0.4\sqrt{2}$ & 0.38576 & &0.84157  & \\ \hline
$h=0.2\sqrt{2}$ &0.12347  &1.6435 &0.55082  & 0.6115\\ \hline
$h=0.1\sqrt{2}$ & 0.03599 &1.7785& 0.31149 & 0.8224\\ \hline
$h=0.05\sqrt{2}$ & 0.00965 & 1.8990& 0.16199 &0.9433 \\ \hline
$h=0.025\sqrt{2}$ & 0.00247 &1.9660 & 0.08218 & 0.9790\\ \hline
\end{tabular}
\smallskip
\caption{Spatial errors and convergence rates of Test 3 with $\epsilon=0.1$.} 
\label{tab2} 
\end{center}
\end{table}
Figure \ref{test2_energy} plots the change of the discrete energy
$E_h(U^\ell)$ in time, which again decreases as predicted by \eqref{eq3.15}.
\begin{figure}[tbh]
   \centering
   \includegraphics[width=3.5in,,height=2.0in]{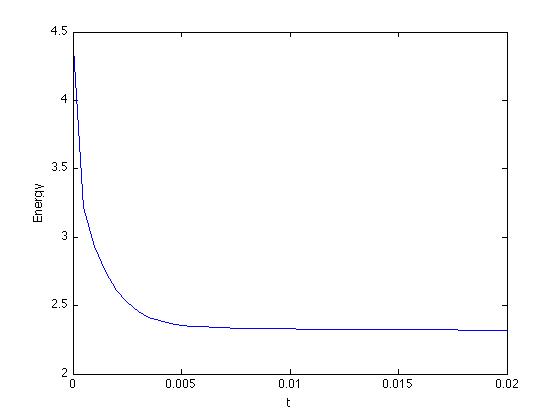} 
   \caption{Decay of the numerical energy $E_h(U^\ell)$ of Test 3.} \label{test2_energy}
\end{figure}
Figure \ref{figure5678} displays four snapshots at four fixed time 
points of the numerical interface with four different $\epsilon$.  Once again, we observe that at each time 
point the numerical interface converges to the sharp interface $\Gamma_t$ of the Hele-Shaw flow  
as $\epsilon$ tends to zero, the interface evolves faster in time for larger $\epsilon$ 
and the mass conservation property is preserved. The total mass approximates a constant 2.989.

\begin{figure}[th]
\centering
\includegraphics[height=1.8in,width=2.4in]{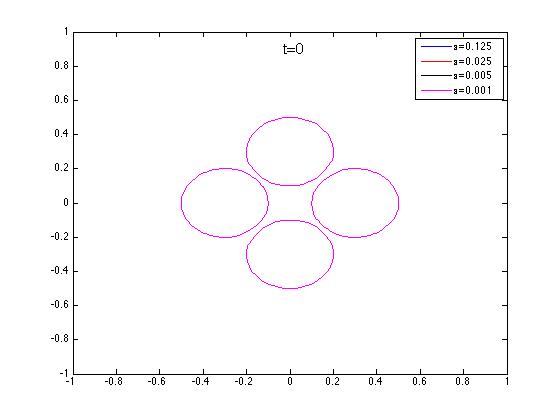}
\includegraphics[height=1.8in,width=2.4in]{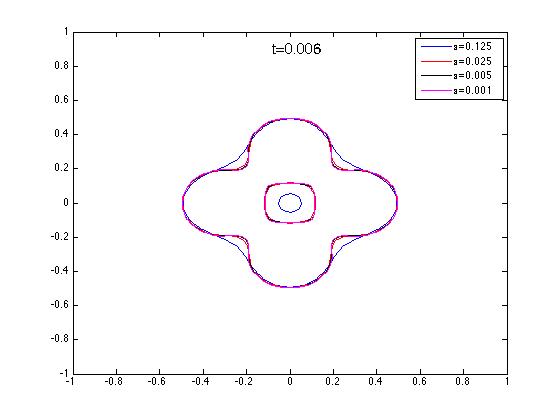}

\includegraphics[height=1.8in,width=2.4in]{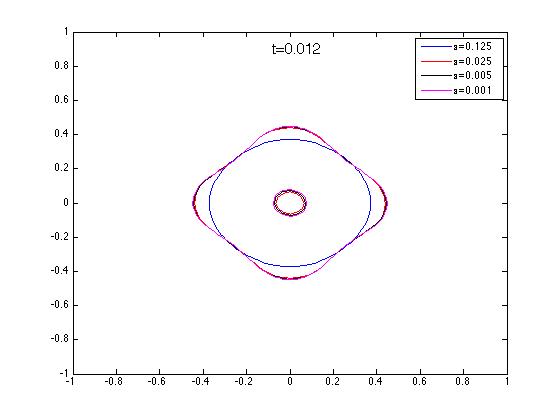}
\includegraphics[height=1.8in,width=2.4in]{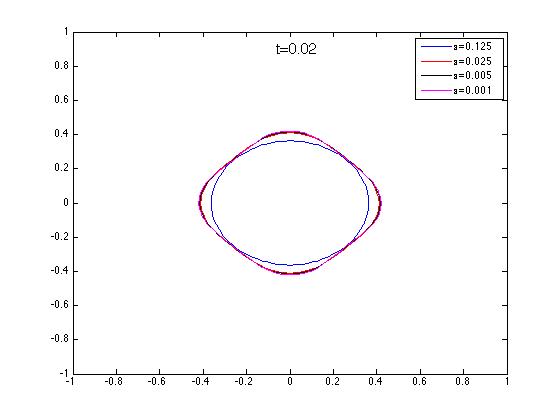}
\caption{Test 3: Snapshots of the zero-level set of $u^{\epsilon,h,k}$ at time $t=0, 
0.006, 0.012, 0.02$ and $\epsilon=0.125, 0.025, 0.005, 0.001$.}
\label{figure5678}
\end{figure}


\end{document}